\def\@tocline#1#2#3#4#5#6#7{\relax
  \ifnum #1>\c@tocdepth % then omit
  \else
    \par \addpenalty\@secpenalty\addvspace{#2}%
    \begingroup \hyphenpenalty\@M
    \@ifempty{#4}{%
      \@tempdima\csname r@tocindent\number#1\endcsname\relax
    }{%
      \@tempdima#4\relax
    }%
    \parindent\z@ \leftskip#3\relax \advance\leftskip\@tempdima\relax
    \rightskip\@pnumwidth plus4em \parfillskip-\@pnumwidth
    #5\leavevmode\hskip-\@tempdima
      \ifcase #1
       \or\or \hskip 1em \or \hskip 2em \else \hskip 3em \fi%
      #6\nobreak\relax
    \dotfill\hbox to\@pnumwidth{\@tocpagenum{#7}}\par
    \nobreak
    \endgroup
  \fi}
\newtheorem{theorem}{Theorem}[section]
\newtheorem{lemma}[theorem]{Lemma}
\newtheorem{prop}[theorem]{Proposition}
\newtheorem{cor}[theorem]{Corollary}
\newtheorem{corollary}[theorem]{Corollary}
\theoremstyle{definition}
\newtheorem{definition}[theorem]{Definition}
\newtheorem{remark}[theorem]{Remark}
\newcommand{\R}{\mathbb R}
\newcommand{\C}{\mathbb C}
\newcommand{\N}{\mathbb N}
\def\Im{{\sf Im}\,}
\newcommand{\scalar}[1]{\langle #1\rangle}
\newcommand{\diff}[1]{\partial_{#1}}
\newcommand{\norm}[1]{| #1|}
\newcommand{\mR}{\mathbb{R}}
\newcommand{\mC}{\mathbb{C}}
\newcommand{\mN}{\mathbb{N}}
\newcommand{\mZ}{\mathbb{Z}}
\newcommand{\cH}{\mathcal{H}}
\theoremstyle{remark} 
\numberwithin{equation}{section}
\begin{document}

\title[]{Implementing zonal harmonics with the Fueter principle}

\author[A. Altavilla]{A. Altavilla${}^{\ddagger}$}\address{Altavilla Amedeo: Dipartimento di Ingegneria Industriale e Scienze Matematiche, Universit\`a Politecnica delle Marche, Via Brecce Bianche, 60131,
Ancona, Italy} \email{altavilla@dipmat.univpm.it}

\author[Hendrik De Bie]{H. De Bie${}^{\dagger}$}\address{Hendrik De Bie: Department of Mathematical Analysis, Faculty of Engineering and Architecture,
Ghent University, Krijgslaan 281-S8, 9000 Gent, Belgium} \email{Hendrik.DeBie@UGent.be}

\author[Michael Wutzig]{M. Wutzig${}^{@}$}\address{Michael Wutzig: Institute of Applied Analysis, Faculty of Mathematics and Computer Science,
Freiberg University of Mining and Technology, Pr\"{u}ferstra{\ss}e 4, 09599 Freiberg, Germany} \email{michael.wutzig@gmail.com}

\thanks{${}^{\ddagger}$GNSAGA of INdAM, SIR grant {\sl ``NEWHOLITE - New methods in holomorphic iteration''} n. RBSI14CFME and SIR grant {\sl AnHyC - Analytic aspects in complex and hypercomplex geometry} n. RBSI14DYEB. The first author was also financially supported by a INdAM fellowship ``{\em mensilit\`a
di borse di studio per l'estero a.a. 2018--2019}'' and wants to thanks the Clifford Research Group at Ghent University where this fellowship has been spent.
${}^{\dagger}$HDB was supported by the Research Foundation 
Flanders (FWO) under Grant EOS 30889451. 
}

\date{\today }

\subjclass[2010]{Primary 33C55; secondary  31A30, 30G35, 32A30}
\keywords{spherical harmonics, zonal harmonics, Gegenbauer polynomials, polyharmonic functions, slice regular functions, Clifford algebras}

\begin{abstract} 
By exploiting the Fueter theorem, we give new formulas to compute zonal harmonic functions 
in any dimension. We first give a representation of them as a result of a suitable ladder operator
acting on a constant function. Then, inspired by recent work of A.~Perotti, using techniques
from slice regularity, we derive explicit expressions for zonal harmonics starting from the 2 and 3 dimensional cases.
It turns out that all zonal harmonics in any dimension are related to the real part of powers of the standard Hermitian product in $\mathbb{C}$. At the end we compare formulas, obtaining interesting equalities involving the real part of positive and negative powers of the standard Hermitian product.

In the two appendices we show how our computations are optimal compared to direct ones.
\end{abstract}
\maketitle
\tableofcontents

\section{Introduction}

The aim of this paper is to give new ways to implement zonal harmonic functions in $\mR^{N}$, for $N\geq 2$, adapting
the celebrated \textit{Fueter's theorem}~\cite{fueter,qian} to our setting. In the field of \textit{Clifford analysis}, researchers
refer to Fueter's theorem anytime it is possible to construct regular hypercomplex functions starting from suitable modifications of complex holomorphic ones, possibly hit by some (fractional) power of the usual Laplacian. 
This general principle fits well in our context, as the original proof by Fueter passes in fact through harmonic analysis (see also~\cite{qiansommen}).
In our work, inspired by recent results by A.~Perotti~\cite{perotticlifford}, we give a complete description 
of all zonal harmonic functions in any dimension, knowing only their characterization in dimension 2 and 3.
Starting from these well known cases, we first rephrase them using complex functions to which we will apply 
the Fueter principle obtaining, at first, Clifford algebra-valued functions of a \textit{paravector} variable (see the following subsection) and eventually our goal. 

We now define and introduce the material that will be used and discussed later.%maybe rephrasing or erasing this sentence

\subsection{Clifford algebras: definitions and related function theories}

	Starting from the real vector space $\mR^n$ with orthonormal basis $\{e_1,\cdots,e_n\}$  we construct the real Clifford algebra $\R_n$. Algebraic multiplication follows the rules
		\begin{align*}
			e_je_k+e_ke_j=-2\delta_{jk}\hspace{15pt}j,k=1,\cdots,n.
		\end{align*}
		Elements $x\in\R_n$ can be written in the form $x=\sum\limits_Ae_Ax_A$, 
with $x_A\in\mR$. Summation runs over all possible ordered index sets $A=\{a_j\}_{j=1}^k$, $1\leq a_1<\cdots<a_k\leq n$, and $e_A$ denotes the 
		$k$-vector $e_A=e_{a_1}\cdots e_{a_k}$.
In particular the real part of $x\in\mR_{n}$ is defined as $x_{0}:=x_{\emptyset}$.
In $\R_{n}$ we consider the subspace of \textit{paravectors} $\mR^{n+1}$ defined as 
$$\R^{n+1}:=\left\{x=x_{0}+\underline{x}\,\big|\,\underline{x}=\sum_{k=1}^{n}x_{k}e_{k},\,x_{k}\in\mathbb{R},\, k=0,\dots,n\right\}.$$

For an element $x\in\R_{n}$, the (\textit{Clifford}) conjugation is denoted as 
$x^{c}$, where for any basis element $e_{i}^{c}=-e_{i}$. Notice that, if $x=x_{0}+\underline{x}\in\R^{n+1}$
is a paravector, then $x^{c}=x_{0}-\underline{x}$.
In $\R_{n}$ it is possible to define the \textit{sphere of imaginary units} $\mathbb{S}$ and the \textit{quadratic cone} $\mathcal{Q}$ as
$$
\mathbb{S}:=\{x\in \R_{n}\,|\,x+x^{c}=0,\, xx^{c}=1\},\qquad\mathcal{Q}:=\bigcup_{J\in \mathbb{S}}\C_{J},
$$
where $\C_{J}$ denotes the complex plane generated by $1$ and $J$. Notice that $\mathbb{R}^{n+1}\subseteq\mathcal{Q}$, for any $n>1$ and equality holds only for $n=1$. Moreover, while for $n=1$ and $n=2$ the quadratic cone coincides with the Clifford algebra $\R_{n}$, for $n>2$, the set of paravectors is a proper subset of the quadratic cone
(see e.g.~\cite{ghiloniperotti}). Summarizing, for any $n>2$ we have
$$
\mR^{n+1}\subset \mathcal{Q}\subset\mR_{n}.
$$
The quadratic cone is the natural subset of $\R_{n}$ where slice regular functions are defined (see Subsection~\ref{slice}).
%In this paper we generically work in $\mR^{n+1}$. An element of $\mR^{n+1}$ can be embedded in $\R_n$ using the scalar-vector notation. Indeed, consider the paravectors $x=x_{0}+\underline{x}$ and $y=y_{0}+\underline{y}\in\R^{n+1}$.

Given two paravectors $x=x_{0}+\underline{x}$ and $y=y_{0}+\underline{y}\in\R^{n+1}$, their product can be written as
$$
xy=x_{0}y_{0}-\langle \underline{x},\underline{y}\rangle + x_{0}\underline{y}+y_{0}\underline{x}+\underline{x}\wedge \underline{y},
$$
where $\langle,\rangle$ stands for the usual Euclidean product in $\R^{n}$ and, in general $\underline{x}\wedge \underline{y}\notin \mathbb{R}^{n+1}$. In fact $\underline{x}\wedge \underline{y}\in\mathbb{R}^{n+1}$ if and only if
$\underline{x}\wedge \underline{y}=0$ if and only if $\underline{x}$ and  $\underline{y}$ are linearly independent over $\mathbb{R}$.
Hence, we have that $xx^{c}=x_{0}^{2}+\langle \underline{x},\underline{x}\rangle=\langle \underline{x},\underline{x}\rangle=|x|^{2}$, i.e. the standard square norm in $\R^{n+1}$. 
Moreover we also have that $x^{2}=x_{0}^{2}-|\underline{x}|^{2}+2x_{0}\underline{x}$ and that $\underline{x}^{2}=-|\underline{x}|^{2}=-|\underline{x}^{c}|^{2}=(\underline{x}^{c})^{2}$.
Given any nonzero paravector $x\in\mathbb{R}^{n+1}$, its inverse is given by $x^{-1}=\frac{x^{c}}{|x|^{2}}$.
Therefore, if we denote by $\langle,\rangle_{N}$ the standard scalar product in $\mathbb{R}^{N}$, given $x,y\in\R^{n+1}$, with $y\neq 0$, then 
$$(xy^{-1})_{0}=\frac{x_{0}y_{0}+\langle \underline{x},\underline{y}\rangle_{n}}{|y|^{2}}=\frac{\langle x,y\rangle_{n+1}}{|y|^{2}},$$
which is exactly the \textit{Fourier coefficient} of $x$ with respect to $y$.
In particular, for any couple of paravectors $x$ and $y$, we have that $(xy^{c})_{0}=x_{0}y_{0}+\langle \underline{x},\underline{y}\rangle=\langle x,y\rangle_{n+1}$. In the remainder of the paper we will drop the subscript $n+1$ whenever it is unambiguous.

In the context of Clifford algebras we will deal with two theories of regular functions, namely \textit{monogenicity} and
\textit{slice regularity}. While the first one takes its origin in the work of R.~Fueter~\cite{fueter} and Gr.~Moisil and N.~Th\'eodoresco~\cite{MT} in the 1930's, slice regularity was introduced some ten years ago by G.~Gentili and D.~Struppa~\cite{gentilistruppa}. We now quickly introduce both theories and highlight the main features we are interested in. For more information see~\cite{BDS,CSSbook, DSS,gentilistoppatostruppa}.

\subsubsection{Monogenicity}

We now introduce the Dirac operator $\partial_{\underline{x}}$ and the generalized Cauchy-Riemann operators $D$ and $\bar D$ as
$$
\partial_{\underline{x}}=e_{1}\frac{\partial}{\partial x_{1}}+\dots+e_{n}\frac{\partial}{\partial x_{n}},\qquad D=\frac{\partial}{\partial x_{0}}-\partial_{\underline{x}},\qquad \bar D=\frac{\partial}{\partial x_{0}}+\partial_{\underline{x}}.
$$
We have
$$ D \bar D = - \Delta_{x,n+1}$$
with $\Delta_{x,n+1}$ the Laplacian in $\mR^{n+1}$ with respect to the variable $x$. We will omit the subindex where it is permitted by the context.
 A function that satisfies $D f=0$ is called \textit{monogenic}. Moreover, a homogeneous polynomial in the variable $x=x_{0}+\underline{x}$ (i.e. a homogeneous polynomial in the variables $(x_{0},x_{1},\dots,n_{n})$), that is monogenic is called a \textit{spherical monogenic}.

\subsubsection{Slice regularity}\label{slice}

The concept of slice regularity on Clifford algebras was introduced in~\cite{colombosabadinistruppa} for functions defined over $\mathbb{R}^{n+1}$.
However, we will use the approach of \textit{stem functions} introduced by R.~Ghiloni and A.~Perotti~\cite{ghiloniperotti} and inspired by the Fueter theorem~\cite{fueter}.

%The quadratic cone is the natural subset where a slice regular function can be defined, i.e. the domain 
%$\Omega$ of definition of a slice regular function with values in $\R_{n}$ is an open subset of $\mathcal{Q}$.

Let $D$ be any subset of $\C$ such that $\bar D=D$. A \textit{stem function} is a function
$F:D\to \R_{n}\otimes\C$, such that, for any $\alpha+i\beta\in D$ it holds that $F(\alpha+i\beta)=\overline{F(\alpha+i\beta)}$, i.e. if $F=F_{1}+\imath F_{2}$, with $F_{1}$ and $F_{2}$ being $\R_{n}$-valued functions, then $F_{1}(\alpha-i\beta)+\imath F_{2}(\alpha-i\beta)=F_{1}(\alpha+i\beta)-\imath F_{2}(\alpha+i\beta)$. The last equality is equivalent to $F_{1}$ being even and $F_{2}$ being odd both with respect to $\beta$.

Let now $D\subset\C$ be any open domain such that $D=\bar D$. We call the \textit{circularization of} $D$
the set 
$$
\Omega_{D}:=\{\alpha+I\beta\in\mathcal{Q}\,|\, \alpha+i\beta\in D,\, I\in\mathbb{S}\}.
$$
Any subset of the quadratic cone $\Omega\subset\mathcal{Q}$ such that $\Omega\setminus \R$ is symmetric with respect to $\R$
will be called a \textit{circular set}.  Clearly, for any circular set $\Omega$ we have $\Omega=\Omega_{\Omega\cap\C_{J}}$, for any $J\in\mathbb{S}$.

\begin{definition}
Let $\Omega_{D}$ be a circular domain and $F:D\to\R_{n}\otimes \C$ be a holomorphic stem function.
Then $f=\mathcal{I}(F):\Omega_{D}\to\R_{n}$ defined as 
$$
f(\alpha+I\beta)=F_{1}(\alpha+i\beta)+IF_{2}(\alpha+i\beta)
$$
is said to be a \textit{slice function} (\textit{induced by} $F$). If $F$ is holomorphic with respect to the standard
complex structures of $\C$ and $\R_{n}\otimes \C$, then $f$ is said to be \textit{slice regular}.
If $F_{1}$ and $F_{2}$ are $\R$-valued, then $f$ is said to be \textit{slice preserving}.
The set of slice regular functions defined on $\Omega$ will be denoted by $\mathcal{S}(\Omega)$, while
the set of slice preserving regular functions by $\mathcal{S}_{\R}(\Omega)$.
\end{definition}
 In~\cite{ghiloniperotti} it is shown that any slice function is induced by a unique stem function.
Notice that any complex intrinsic holomorphic function $\Phi:D\to\C$ can be seen
as a stem function inducing a slice preserving function $\phi:\Omega_{D}\to\R_{n}$, and vice versa.

Examples of slice regular functions are polynomials and converging power series of the form $f(x)=\sum x^{k}a_{k}$, with $a_{k}\in\R_{n}$; examples of slice preserving functions are polynomials and power series
with real coefficients.
Notice that a slice function $f$ is slice preserving if and only if $f(\Omega\cap\C_{J})\subset\C_{J}$, for any $J\in\mathbb{S}$. Slice regular functions which preserve one or all slices of the form $\C_{J}$ are studied in detail in~\cite{AdF1,AdF2}.

\subsubsection{Fueter-Sce-Qian Theorem}
We now recall the statement of the Fueter theorem in the form of T.~Qian~\cite{qian}. To do this, we need to 
recall the definition and the main properties of the fractional Laplacian.

The Fourier transform with respect to the variable $x$ and its inverse is defined as
$$
\mathcal{F}(\varphi)(\xi):=\frac{1}{(2\pi)^{(n+1)/2} }\int_{\R^{n+1}}\varphi(x)e^{i\langle x,\xi\rangle} dx,\quad \mathcal{F}^{-1}(\psi)(x):=\frac{1}{(2\pi)^{(n+1)/2} }\int_{\R^{n+1}}\psi(x)e^{-i\langle x,\xi\rangle} dx.
$$
With these definitions it is possible~\cite{stein} to compute the $\mu$-Laplacian as
$$
\Delta_{x,n+1}^{\mu}\varphi:=\mathcal{F}^{-1}\left((-i|\xi|)^{2\mu}\mathcal{F}(\varphi)(\xi)\right)
$$

Let now $\Delta^{\mu}$ denote $\Delta_{x,n+1}^{\mu}$, for any $\mu>0$.
Thanks to the definition, for any differentiable function $f$ such that $\Delta^{\mu}f$ is a differentiable function,
the partial derivative and the multiplication by a scalar commute with $\Delta^{\mu}$. Under these hypotheses one therefore has
\begin{equation}\label{commuting}
[\Delta^{\mu},\partial_{\underline{x}}]f=[\Delta^{\mu},\bar D]f=0.
\end{equation}

Moreover, thanks to~\cite[formula (6) p. 118]{stein}, for any $\alpha, \beta >0$, such that $\alpha+\beta<n$, whenever it makes sense, we have
$$
\Delta^{\alpha}(\Delta^{\beta}f)=\Delta^{\alpha+\beta}f.
$$

We know that if $p_{k}$ is a homogeneous function of degree $k$, then, whenever it is well defined, the function $\Delta^{\mu}p_{k}$ is homogeneous of degree $k-2\mu$ (a generalization of this formula was given~\cite[Appendix B]{brascofranzina} where our particular case is given for $p=2$ and $s=1/2$).

With our choice of coefficients in the definition of the Fourier transform, according to~\cite[formula 32, p. 73]{stein} we have
\begin{equation}\label{fourierpol}
\mathcal{F}\left(\frac{P_{k}(.)}{|.|^{k+n+1-a}}\right)(\xi)=\gamma_{k,a}\frac{P_{k}(\xi)}{|\xi|^{k+a}},
\end{equation}
where $0<a<n+1$, $k\in\N$, $P_{m}$ is a homogeneous harmonic polynomial of degree $k$ and
\begin{equation}\label{gamma}
\gamma_{k,a}:=i^{k}\Gamma\left(\frac{k+a}{2}\right)\bigg/\Gamma\left(\frac{k+n+1-a}{2}\right).
\end{equation}

For any function defined in the space of paravectors it is possible to define the following two inversion operators
with respect to the variable $x$:
\begin{itemize}
\item the Clifford inversion 
$$
\mathfrak{K}[f](x):=\frac{(x_{0}-\underline{x})}{|x|^{n+1}}f\left(\frac{x_{0}-\underline{x}}{|x|^{2}}\right);
$$
\item the Kelvin inversion 
\begin{equation}
\label{kel}
\mathcal{K}[f](x):=\frac{1}{|x|^{n-1}}f\left(\frac{x}{|x|^{2}}\right).
\end{equation}
\end{itemize}
By direct computation we have that $\mathfrak{K}^{2}=\mathcal{K}^{2}=1$.
If $p_{k}$ is a homogeneous function of degree $k$, then
\begin{equation}\label{kelvinhomo}
\mathcal{K}[p_{k}(x)]=|x|^{2-(n+1)-2k}p_{k}(x)
\end{equation}
It is known that Clifford inversion preserves monogenicity, while Kelvin inversion preserves harmonicity.
In the following theorem we recall the statement of the \textit{Fueter theorem} in Qian's formulation~\cite{qian}, (see also \cite{dkqs,dongqian,kou} for some other reformulations).

\begin{theorem}\label{fueterqian}
Let $k$ be a positive integer and $\Delta=\Delta_{x,n+1}$ denote the usual Euclidean Laplacian with respect to the variable $x$. If $x\in\R^{n+1}$ is a paravector, then
\begin{enumerate}
\item $\Delta^{\frac{n-1}{2}}(x^{-k})$ is monogenic and homogeneous of degree $1-n-k$;
\item $\mathfrak{K}[\Delta^{\frac{n-1}{2}}(x^{-k})]$ is monogenic and homogeneous of degree $k-1$;
\item if $n=2h+1$ is odd then 
$$\mathfrak{K}[\Delta^{h}(x^{-k})]=\Delta^{h}x^{n+k-2}.$$
\end{enumerate}
\end{theorem}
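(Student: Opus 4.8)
The plan is to concentrate all the work in part~(1) and to deduce (2) and (3) from it, using the properties of the inversion operators together with the Fourier identity~\eqref{fourierpol}. \textbf{Part (1).} The homogeneity is immediate: $x\mapsto x^{-k}$ is homogeneous of degree $-k$, and $\Delta^{\mu}$ lowers degrees by $2\mu$, so $\Delta^{\frac{n-1}{2}}(x^{-k})$ is homogeneous of degree $1-n-k$. For the monogenicity, I would first use~\eqref{commuting} --- together with the fact that $\partial/\partial x_{0}$ commutes with $\Delta^{\mu}$ --- to slide $\Delta^{\frac{n-1}{2}}$ past $\overline{D}$, so that the claim reduces to $\Delta^{\frac{n-1}{2}}\big(\overline{D}\,x^{-k}\big)=0$ on $\R^{n+1}\setminus\{0\}$. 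Next I would compute $\overline{D}\,x^{-k}$: since $x^{-k}=\mathcal{I}(\zeta\mapsto\zeta^{-k})$ is a slice function, writing $x=x_{0}+I|\underline{x}|$ with $I=\underline{x}/|\underline{x}|\in\mathbb{S}$ and feeding the Cauchy--Riemann equations for $\zeta^{-k}$ into the even and odd parts of its stem function gives the ``spherical derivative'' identity
\begin{equation*}
\overline{D}\,x^{-k}\;=\;-(n-1)\,g_{k},\qquad g_{k}(x):=\frac{\mathrm{Im}\big((x_{0}+i|\underline{x}|)^{-k}\big)}{|\underline{x}|}\;=\;|x|^{-k-1}\,U_{k-1}\big(x_{0}/|x|\big),
\end{equation*}
where $U_{k-1}$ is the Chebyshev polynomial of the second kind (equivalently the Gegenbauer polynomial $C^{(1)}_{k-1}$); in particular $g_{k}$ is scalar-valued, homogeneous of degree $-k-1$, and invariant under rotations of $\underline{x}$. (For $n=1$ the prefactor $n-1$ vanishes, as it must.)

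\textbf{The core of Part (1).} It remains to prove $\Delta^{\frac{n-1}{2}}g_{k}=0$ on $\R^{n+1}\setminus\{0\}$, and here I would pass to the Fourier side. Expanding $U_{k-1}$ in the Gegenbauer polynomials $C^{(n-1)/2}_{l}$ with $0\le l\le k-1$ and $l\equiv k-1\pmod 2$ turns $g_{k}$ into a finite sum
\begin{equation*}
g_{k}(x)\;=\;\sum_{l}c_{l}\,\frac{H_{l}(x)}{|x|^{\,k+1+l}},\qquad H_{l}(x):=|x|^{l}\,C^{(n-1)/2}_{l}\big(x_{0}/|x|\big),
\end{equation*}
and each $H_{l}$ is the zonal \emph{harmonic} solid polynomial of degree $l$ in $\R^{n+1}$, because the parameter $(n-1)/2=\big((n+1)-2\big)/2$ is precisely the harmonic one. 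Applying~\eqref{fourierpol} termwise, with $a=n-k$ (used in its analytically continued, finite-part form when $k\ge n$), one gets $\mathcal{F}(g_{k})(\xi)=\sum_{l}\widetilde{c}_{l}\,H_{l}(\xi)\,|\xi|^{-l-(n-k)}$; multiplying by the symbol $(-i|\xi|)^{\,n-1}$ of $\Delta^{\frac{n-1}{2}}$ then yields
\begin{equation*}
\mathcal{F}\big(\Delta^{\frac{n-1}{2}}g_{k}\big)(\xi)\;=\;\sum_{l}\widehat{c}_{l}\,H_{l}(\xi)\,|\xi|^{\,k-1-l}.
\end{equation*}
Since $0\le l\le k-1$ and $l\equiv k-1\pmod 2$, every exponent $k-1-l$ is a non-negative even integer, so each summand $H_{l}(\xi)|\xi|^{\,k-1-l}$ is a genuine polynomial; hence $\mathcal{F}\big(\Delta^{\frac{n-1}{2}}g_{k}\big)$ is a polynomial, $\Delta^{\frac{n-1}{2}}g_{k}$ is a distribution supported at the origin, and it therefore vanishes on $\R^{n+1}\setminus\{0\}$. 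Thus $\overline{D}\big(\Delta^{\frac{n-1}{2}}x^{-k}\big)=0$ there, which proves~(1).

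\textbf{Parts (2) and (3).} For~(2), set $g:=\Delta^{\frac{n-1}{2}}(x^{-k})$, which by~(1) is monogenic and homogeneous of degree $1-n-k$; Clifford inversion preserves monogenicity, and from $\mathfrak{K}[f](x)=\frac{x^{c}}{|x|^{n+1}}f(x^{-1})$ one checks that $\mathfrak{K}$ sends homogeneity degree $d$ to $-n-d$, so $\mathfrak{K}[g]$ is monogenic and homogeneous of degree $k-1$. For~(3), where $n=2h+1$ and $\Delta^{h}=\Delta^{\frac{n-1}{2}}$ is an ordinary iterated Laplacian, I would first verify that $\Delta^{h}x^{\,n+k-2}$ is monogenic by rerunning the computation above on the monomial $x^{\,n+k-2}$: now $\overline{D}\,x^{\,n+k-2}=-(n-1)\,|x|^{\,n+k-3}U_{n+k-3}(x_{0}/|x|)$, and the Gegenbauer connection formula expressing $C^{(1)}$ in terms of $C^{(n-1)/2}$ truncates the zonal expansion to terms $|x|^{2j}H_{l}$ with $j\le h-1$, each of which is annihilated by $\Delta^{h}$ (as $\Delta^{h}(|x|^{2j}H_{l})=0$ for $j<h$). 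Thus $\mathfrak{K}[g]$ and $\Delta^{h}x^{\,n+k-2}$ are both monogenic, homogeneous of the same degree $k-1$, and axially symmetric (being built from functions of $\zeta$ alone); an axially symmetric monogenic function is determined by its restriction to the real axis $\underline{x}=0$, so it suffices to see that the two restrictions agree --- and by homogeneity each is a scalar multiple of $x_{0}^{\,k-1}$, the two multiples coinciding by the normalisation of the Gegenbauer/Fueter construction. (Using $\mathfrak{K}^{2}=\mathrm{id}$ one may equivalently compare $\Delta^{h}x^{-k}$ with $\mathfrak{K}[\Delta^{h}x^{\,n+k-2}]$ in degree $1-n-k$.)

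\textbf{Main obstacle.} I expect the delicate point to be the core of Part~(1): handling the zonal (Gegenbauer) expansion of $g_{k}$ cleanly and, above all, legitimising~\eqref{fourierpol} outside its stated range $0<a<n+1$ --- i.e.\ via analytic continuation in $a$, or finite-part distributions, when $k\ge n$ --- since that is exactly where a careless argument would collapse. The Clifford-algebra bookkeeping behind $\overline{D}\,x^{-k}=-(n-1)g_{k}$ is routine but error-prone, and fixing the constant to be $1$ in Part~(3) is a concrete (if elementary) computation rather than a formality.
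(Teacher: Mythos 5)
First, a point of comparison: the paper does not prove Theorem~\ref{fueterqian} at all --- it is recalled verbatim from Qian's paper and used as a black box --- so there is no internal proof to measure yours against. Your architecture (reduce monogenicity via \eqref{commuting} and $\bar D x^{-k}=(1-n)(x^{-k})'_s$ to the vanishing of $\Delta^{\frac{n-1}{2}}$ on a scalar zonal function, then work on the Fourier side with \eqref{fourierpol}) is the classical route and is essentially sound. But the proposal has a genuine gap exactly at the point you flag, and the fix is not the one you suggest. You expand $U_{k-1}=C^{1}_{k-1}$ into $C^{(n-1)/2}_{l}$ over the full range $0\le l\le k-1$, $l\equiv k-1\pmod 2$, and apply \eqref{fourierpol} termwise with $a=n-k$. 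For $n$ odd, the terms with $k-l\ge n$ are not merely "delicate": there $l+n-k$ is a nonpositive even integer, so $\gamma_{l,\,n-k}$ has a pole, the homogeneous extension of $H_{l}(x)|x|^{-(k+1+l)}$ across the origin does not exist (only a finite part, whose Fourier transform carries a genuine $\log|\xi|$), and --- decisively --- the elementary identity $\Delta\bigl(|x|^{2a}H_{l}\bigr)=2a(2a+2l+n-1)|x|^{2a-2}H_{l}$ iterated $\frac{n-1}{2}$ times shows that $\Delta^{\frac{n-1}{2}}\bigl(H_{l}|x|^{-(k+1+l)}\bigr)\neq 0$ for exactly those $l$. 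So if such terms appeared with nonzero coefficient the conclusion would be false, and no amount of distributional care rescues it. The argument closes only because of a fact you never state or use: the connection coefficients of $C^{1}_{k-1}$ in the basis $\{C^{(n-1)/2}_{l}\}$ vanish for $l<k-n+2$ (iterate \eqref{G3bis} from $\lambda=1$ up to $\lambda=\frac{n-1}{2}$, or note the factor $(1-\tfrac{n-1}{2})_{j}$ in the Gegenbauer connection formula). Hence every surviving term has $k-l\le n-2$ and is annihilated; this truncation is the missing idea, not analytic continuation of \eqref{fourierpol}. (For $n$ even the parity of $l+n-k$ keeps you away from all poles and the termwise continuation is legitimate as you describe.)

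The second gap is in Part (3). Reducing to "two axially symmetric monogenic functions, homogeneous of degree $k-1$, agreeing on the real axis" is a reasonable strategy, but the entire content of the statement is that the proportionality constant is $1$, and "the normalisation of the Gegenbauer/Fueter construction" is an appeal, not a computation. You must actually evaluate both $\Delta^{h}(x^{n+k-2})$ and $\mathfrak{K}\bigl[\Delta^{h}(x^{-k})\bigr]$ at real $x=x_{0}>0$ --- each reduces, via the product of factors $2a(2a+2l+2h)$ above applied to the single surviving leading zonal term, to an explicit multiple of $x_{0}^{\,k-1}$ --- and check the two numbers coincide; you concede this is undone. Two smaller issues: the sign is off in your formula for $g_{k}$ (one has $\mathrm{Im}\bigl((x_{0}+i|\underline{x}|)^{-k}\bigr)/|\underline{x}|=-|x|^{-k-1}U_{k-1}(x_{0}/|x|)$), which is harmless for the vanishing claim but will bite the moment you chase the constant in (3); and the assertion that an axially symmetric monogenic function is determined by its restriction to the real axis (the generalized CK-extension) is used without justification, though it is standard.
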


\subsection{Zonal harmonics and  Gegenbauer polynomials}
We consider complex-valued functions defined on $\mR^{N}$.
In harmonic analysis a \textit{spherical harmonic} is a homogeneous polynomial of some degree $k$ that is in the 
kernel of the Laplacian. In the space $\mathcal{H}_{k}$ of spherical harmonics of degree $k$ it is possible to 
define the following $L^{2}$ inner product:
$$
\langle P(x),Q(x)\rangle_{\mathbb{S}}:=\frac{1}{\omega_{N-1}}\int_{\mathbb{S}^{N-1}}\overline{P(x)}Q(x)d\sigma(x),
$$
where $d\sigma(x)$ is the usual  Lebesgue measure on the sphere $\mathbb{S}^{N-1}$ and $\omega_{N-1}$ denotes
its surface area. Fix a point $y\in\mathbb{S}^{N-1}$, and consider the linear map $\Lambda:\mathcal{H}_{k}\to\mathbb{C}$ defined as $\Lambda(P)=P(y)$. Since $\mathcal{H}_{k}$ is a finite dimensional inner product space,
for any $N\in\mathbb{N}$, there exists a \textit{reproducing kernel}, i.e. a unique function $Z_{k}^{\lambda(N)}(\cdot,y)\in\mathcal{H}_{k}$ such that
$$
P(y)=\Lambda(P)=\langle P, Z_{k}^{\lambda(N)}(\cdot,y)\rangle_{\mathbb{S}},\qquad \forall P\in\mathcal{H}_{k}.
$$
Such a function $Z_{k}$ is called the \textit{zonal harmonic} of degree $k$ with pole $y$ (see~\cite{axler,SW}). For reproducing kernels in the context of Clifford analysis see~\cite{DBSW1}.

In view of our interest in giving new representations of zonal harmonic functions, we now discuss
their explicit expression in terms of Gegenbauer polynomials~\cite{stein,SW}.

%\section{Properties of Gegenbauer polynomials}
%\subsection{Computation of the Laplacian}
%\textcolor{red}{check which formula we use and, maybe, delete the others}
%
%	The Gegenbauer polynomials are special cases of the Jacobi polynomials. 
%	For a real parameter $\lambda>-\frac{1}{2}$ they can be written as \textcolor{red}{eliminate explicit expression in term of Jacobi?}
%		\begin{align*}
%			C_k^\lambda(x)=\frac{\big(2\lambda\big)_k}{\big(\lambda+\frac{1}{2}\big)_k}P_k^{\lambda-\frac{1}{2},\lambda-\frac{1}{2}}(x).
%		\end{align*}
		The Gegenbauer polynomial of degree $k$ and parameter $\lambda\in\mR$, $\lambda>-\frac{1}{2}$ is given explicitly by
		\begin{align}\label{explicitgegenbauer}
			C_k^\lambda(z)=\sum_{j=0}^{\lfloor\frac{k}{2}\rfloor}(-1)^j\frac{\Gamma(k-j+\lambda)}{\Gamma(\lambda)j!(k-2j)!}(2z)^{k-2j}.
		\end{align}
The derivative of a Gegenbauer polynomial is (see~\cite[(4.7.14)]{Sz})
\begin{equation}\label{G1}
\frac{d}{dz}C^{\lambda}_{k}(z)=2\lambda C^{\lambda+1}_{k-1}(z).
\end{equation}
%The differential equation is given by Section 10.9, formula (14) in \cite{Bateman} \textcolor{red}{eliminate next?}
%\begin{equation}\label{G2}
%(1-z^{2})\frac{d^{2}}{dz^{2}}C^{\lambda}_{k}(z)-(2\lambda+1)z\frac{d}{dz}C^{\lambda}_{k}+k(k+2\lambda)C^{\lambda}_{k}=0,
%\end{equation}
%therefore  \textcolor{red}{eliminate next?}
%\begin{equation}\label{G2.1}
%2\lambda(2\lambda+2)(1-z^{2}) C^{\lambda+2}_{k-2}(z)=2\lambda(2\lambda+1)zC^{\lambda+1}_{k-1}(z)-k(k+2\lambda)C^{\lambda}_{k}(z).
%\end{equation}
The recursion relation is (see~\cite[Section 10.9, formula (13)]{Bateman})
\begin{equation}\label{G3}
zC^{\lambda+1}_{k-1}(z)=\frac{k}{2(k+\lambda)}C^{\lambda+1}_{k}(z)+\frac{k+2\lambda}{2(k+\lambda)}C^{\lambda+1}_{k-2}(z).
\end{equation}
We also have (see~\cite[(4.7.27)]{Sz})
\begin{align}\label{g2}
4\lambda(\ell+\lambda+1)(1-t^2)C_\ell^{\lambda+1}(t)&=(\ell+2\lambda)(\ell+2\lambda+1)C_\ell^\lambda(t)-(\ell+1)(\ell+2)C_{\ell+2}^\lambda(t),
\end{align}
and~\cite[(4.7.29)]{Sz} 
\begin{equation}\label{G3bis}
\frac{\lambda+ m}{\lambda}C^{\lambda}_{m}(z)=C^{\lambda+1}_{m}(z)-C^{\lambda+1}_{m-2}(z).
\end{equation}
Finally, from~\cite[Section 10.9, formula (24)]{Bateman} we have
\begin{equation}\label{G4}
z\frac{d}{dz}C^{\lambda}_{k}(z)-kC^{\lambda}_{k}(z)=\frac{d}{dz}C^{\lambda}_{k-1}\quad\Leftrightarrow\quad
(2\lambda)zC^{\lambda+1}_{k-1}(z)-kC^{\lambda}_{k}(z)=2\lambda C^{\lambda+1}_{k-2}(z).
\end{equation}

For $x,y\in\R^{n+1}$, we set the following notation,
\begin{equation}\label{angle}
w:=\frac{\langle x,y\rangle}{|x||y|}.
\end{equation}
When $\lambda=\frac{n+1}{2}-1=\frac{n-1}{2}$ and choosing the suitable variable, Gegenbauer polynomials represent zonal harmonics $Z_{k}^{\lambda}$ in $\mR^{n+1}$ in the following way~\cite{stein,SW}
\begin{equation}\label{zonalGegenbauer}
Z_{k}^{\lambda}(x,y)=\frac{k+\lambda}{\lambda}C_{k}^{\lambda}(w)(|x||y|)^{k}.
\end{equation}
If the total dimension $n+1$ equals 3, then we deal with the classical case of the study of spherical harmonics. 
In this case, Gegenbauer polynomials $C_{k}^{\frac{1}{2}}$ coincide with the Legendre polynomials.

Notice that when $n+1=2$ (and so for $\lambda=0$), the previous formula is not well defined. However~\cite[(4.7.8)]{Sz}, it holds:
\begin{equation}\label{chebyshev}
Z_{k}^{0}(x,y)=\lim_{\lambda\to 0}\frac{k+\lambda}{\lambda}C_{k}^{\lambda}(w)(|x||y|)^{k}=2T_{k}(w)(|x||y|)^{k},
\end{equation}
where $T_{k}$ is the Chebyshev polynomial of the first kind of degree $k$. Chebyshev polynomials satisfy the following useful equality, which is a consequence of~\eqref{G3bis}:
\begin{equation}\label{chebygegen}
2T_{k}(t)=C^{1}_{k}(t)-C^{1}_{k-2}(t).
\end{equation}
This particular case is described in more detail in the following subsection.
\subsection{Dimension 2}
When the dimension is equal to $2$, it is easy to compute zonal harmonics. For any $k\geq 1$, if $x=\rho_{1}e^{i\vartheta_{1}},y=\rho_{2}e^{i\vartheta_{2}}\in\mR^{2}=\mR_{2}=\mC$, these are given by~\cite[p. 94]{axler}
$$
Z^{0}_{0}\equiv 1,\qquad Z^{0}_{k}(x,y)=\left(\rho_{1}\rho_{2}\right)^{k}2\cos\left(k(\vartheta_{1}-\vartheta_{2})\right)=2T_{k}(w)(|x||y|)^{k}.
$$
Notice that $Z^{0}_{k}$ can also be obtained in the following two ways (recall that, for any $k\in\mZ\setminus\{0\}$ the function $\left(\left(xy^{c}\right)^{k}\right)_{0}$ is a homogeneous polynomial of degree $k$ in $x$):
\begin{equation}\label{2dim1}
\left((xy^{c})^{k}\right)_{0} =\left(\rho_{1}\rho_{2}\right)^{k}\left(\exp\left(ik(\vartheta_{1}-\vartheta_{2})\right)\right)_{0}=\left(\rho_{1}\rho_{2}\right)^{k}\cos\left(k(\vartheta_{1}-\vartheta_{2})\right)=\frac{1}{2}Z^{0}_{k}(x,y),
\end{equation}
and,
\begin{equation}\label{2dim2}
\begin{split}
\mathcal{K}\left[\left(\left(xy^{-1}\right)^{-k}\right)_{0}\right](x)& =\mathcal{K}\left[\rho_{2}^{2k}\left(\left(xy^{c}\right)^{-k}\right)_{0}\right](x)= \rho_{2}^{2k}\rho_{1}^{2k}\left(\left(xy^{c}\right)^{-k}\right)_{0}\\
&= \rho_{2}^{2k}\rho_{1}^{2k}\left(\left(\frac{yx^{c}}{\rho_{2}^{2}\rho_{1}^{2}}\right)^{k}\right)_{0}=\left(\left(yx^{c}\right)^{k}\right)_{0}=\left(\left(\left(xy^{c}\right)^{c}\right)^{k}\right)_{0}\\
&=\left(\left(\left(xy^{c}\right)^{k}\right)^{c}\right)_{0}=\left((xy^{c})^{k}\right)_{0}=\frac{1}{2}Z^{0}_{k}(x,y).
\end{split}
\end{equation}
Notice, in particular, that, for any integer $k\neq 0$
$$
\left((xy^{c})^{k}\right)_{0}=\mathcal{K}\left[\left(\left(xy^{-1}\right)^{-k}\right)_{0}\right](x).
$$

\subsection{Main results and outline of the paper}
Let $\scalar{y,\nabla_x}$ denote the partial derivative in the $y$ direction.
In Section~\ref{ladder} we study the action of the \textit{ladder operator} defined as $\mathcal{K}\scalar{y,\nabla_x}\mathcal{K}$ on the set of Gegenbauer polynomials. If $x,y$ belong to $\mR^{n+1}$, it turns out that, up to an explicit coefficient, the ladder
operator applied to $C^{\lambda}_{\ell}(w)(|x||y|)^{\ell}$, returns $C^{\lambda}_{\ell+1}(w)(|x||y|)^{\ell+1}$. Hence,
starting from the zeroth Gegenbauer polynomial of parameter $\lambda$ (and hence from the zeroth zonal harmonic in any dimension), we can derive the Gegenbauer polynomial of the same parameter and of any degree $k$. In particular,
the following equality is proven in Theorem~\ref{th1}
\begin{align*}
	\left(\mathcal{K}\scalar{y,\nabla_x}\mathcal{K}\right)^k\left[1\right]=(-1)^{k}k!\frac{\lambda}{\lambda+k}Z^{\lambda}_{k}(x,y).
\end{align*}
This representation will be used twice: to derive a new formula for the Poisson kernel by means of
the formal exponential of $\mathcal{K}\scalar{y,\nabla_x}\mathcal{K}$ and later in the last result of Section~\ref{further}. 

In Section~\ref{iterated} we give a first demonstration of the Fueter principle (in the M.~Sce flavor~\cite{sce}).
Starting from zonal harmonics in dimension 2 and 3, we are able, by applying a proper amount of Laplacians,
to obtain zonal harmonics in any even and odd dimension, respectively. In particular, we prove the following two equalities
\begin{itemize}
\item if $x,y\in\mR^{2m+3}$, then,
\[
(\Delta_{y}\Delta_{x})^{m}\left[ Z^{1/2}_{k+2m}(x,y)\right]=\widetilde{\beta}_{k}^{m} Z^{1/2+m}_{k}(x,y), 
\]
\item if $x,y\in\mR^{2m+2}$, then,
\[
(\Delta_{y}\Delta_{x})^{m}\left[ Z^{0}_{k+2m}(x,y)\right]=\widehat{\beta}_{k}^{m} Z^{m}_{k}(x,y).
\]
\end{itemize}
where $\widetilde{\beta}_{k}^{m}$ and $\widehat{\beta}_{k}^{m}$ are constants, depending only on the dimension and the degree, and are explicitly given in Corollaries~\ref{odddim} and~\ref{evendim}.
The proof of the previous equalities is not given by direct computation but by making use of properties of Gegenbauer polynomials. In Appendix~\ref{iterate} we show how direct computations might become long and difficult very quickly.

After some preliminaries about slice regular functions defined on Clifford algebras and their harmonicity properties, in Section~\ref{slicereg}
we give an explicit and complete proof of the Fueter theorem for this particular class. In particular, in Lemma~\ref{fueterslice} we prove that, for any positive integer $n$, given any slice regular function $f:\Omega\subset\mR^{n+1}\to\R_{n}$, whenever it is well defined, the function $\Delta^{\frac{n-1}{2}}f$ is monogenic.
Moreover, whenever it is well defined, the function $\Delta^{\frac{n-1}{2}}(f)^{\circ}_{s}$ is harmonic, $(f)^{\circ}_{s}$ being the so-called \textit{spherical value} (see Definition~\ref{spherical}).
Starting from these observations and from the explicit description of zonal harmonics in dimension 2 we
are able to prove that if $n=2m+1$ and $x=x_{0}+\underline{x}, y=y_{0}+\underline {y}$ belong to $\mathbb{R}^{n+1}$, then
\begin{equation*}%\label{zonalscalar1}
(\Delta_{y,n+1}\Delta_{x,n+1})^{m}\left[((xy^{c})^{k+2m})_{0}\right]=\frac{1}{2}\widetilde{\beta}_{k}^{m}Z^{m}_{k}(x,y).
\end{equation*}
In  Appendix~\ref{4D} we show explicitly the connection between our results and those in~\cite{perotticlifford}.

In the last section we obtain an equality similar to the previous one for any dimension involving,
instead of the double Laplacian, the action of a suitable fractional Laplacian together with the Kelvin inversion
(in a way analogous to Theorem~\ref{fueterqian} point \textit{(2)} or to formula~\eqref{2dim2}). The precise result is the following: for any $x\in\mR^{n+1}$ and any $y\in\mR^{n+1}\setminus\{0\}$, we have
\begin{equation*}%\label{formulakelvinproduct}
\mathcal{K}\left[\Delta^{\frac{n-1}{2}}((xy^{-1})^{-k})_{0}\right]=(n-1)!i^{n-1}\frac{k}{2k+n-1}Z_{k}^{\lambda}(x,y)
\end{equation*}

Eventually we compare these last two equations obtaining an analogue of Theorem~\ref{fueterqian} point \textit{(3)}:
let $n$ be an odd number, $n=2m+1$ and $x=x_{0}+\underline{x}, y=y_{0}+\underline {y}\in\mathbb{R}^{n+1}$, then
\begin{equation*}
(\Delta_{y,n+1}\Delta_{x,n+1})^{m}\left[((xy^{c})^{k+2m})_{0}\right]=\eta_{k}^{m}\mathcal{K}\left[\Delta^{m}_{n+1,x}((xy^{-1})^{-k})_{0}\right]
\end{equation*}
where $\eta_{k}^{m}$ is an explicit constant depending only on the dimension and on the degree (see Theorem~\ref{zonalscalar}).

\section{A ladder operator}\label{ladder}

In this section we give a realization of zonal harmonics in terms of a ladder operator which raises the degree of
a fixed polynomial. Starting from the constant polynomial equal to 1 (which is the zeroth zonal harmonic in any dimension), we are going to construct all the others.

We start from some basic identities.
Let $x=(x_0, x_{1},\dots,x_{n})\in\R^{n+1}$, then, for any $i=0,\dots, n$, we have,
\begin{equation*}\label{B1}
\partial_{x_{i}}|x|^{\alpha}=\partial_{x_{i}}(x_{1}^{2}+\dots+x_{n}^{2})^{\frac{\alpha}{2}}=\frac{\alpha}{2}|x|^{\alpha-2}2x_{i}=\alpha x_{i}|x|^{\alpha-2}.
\end{equation*}
From the previous equation we have
\begin{equation}\label{B2}
\partial_{x_{i}}(w)=\frac{y_{i}}{|x||y|}-\frac{\langle x,y\rangle}{|x|^{3}|y|}x_{i},
\end{equation}
where $w$ is defined as in formula~\eqref{angle}.
We now construct a ladder (raising) operator for Gegenbauer polynomials.
Consider the Kelvin inversion defined in formula~\eqref{kel} and the operator $\scalar{y,\nabla_x}=\sum_{j=1}^{n+1} y_j\diff{x_j}$ where $x,y\in\mR^{n+1}$.
We prove a raising property for the Gegenbauer polynomials. We start with the following initial step.

%\begin{align}
%\label{kel}
%	K\big(f(x)\big)=\frac{1}{\norm{x}^{n-1}}f\Big(\frac{x}{\norm{x}^2}\Big),
%\end{align}
\begin{lemma}\label{lem1} In $\mR^{n+1}$ with $n>1$, it holds that
	\begin{align*}
		\left(\mathcal{K}\scalar{y,\nabla_x}\mathcal{K}\right)\left[1\right]=-C_1^\lambda(w)|x||y|=-\frac{\lambda}{\lambda+1}Z^{\lambda}_{1}(x,y),
	\end{align*}
	where $w=\frac{\scalar{x,y}}{\norm{x}\norm{y}}$ and $\lambda=\frac{n-1}{2}$ and $C_1^\lambda(t)$ is the Gegenbauer polynomial of degree $1$ and order $\lambda$.
	\end{lemma}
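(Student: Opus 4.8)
The plan is to compute $\mathcal{K}\scalar{y,\nabla_x}\mathcal{K}[1]$ directly, since this is just the base case of the ladder construction and the polynomial involved ($C_1^\lambda(w)|x||y|$, which is linear in $x$) is simple enough. First I would unwind the outer Kelvin inversions. Since $f\equiv 1$ is homogeneous of degree $0$, formula~\eqref{kelvinhomo} gives $\mathcal{K}[1](x)=|x|^{2-(n+1)}=|x|^{1-n}$. So the inner object is $g(x):=\scalar{y,\nabla_x}\big(|x|^{1-n}\big)=\sum_{j=1}^{n}y_j\,\partial_{x_j}|x|^{1-n}=(1-n)|x|^{-1-n}\sum_{j=1}^{n}y_j x_j=(1-n)\,|x|^{-1-n}\scalar{x,y}$, using the elementary identity $\partial_{x_i}|x|^\alpha=\alpha x_i|x|^{\alpha-2}$ recalled at the start of this section.

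Next I would apply the outer Kelvin inversion to $g$. By definition~\eqref{kel}, $\mathcal{K}[g](x)=|x|^{1-n}\,g\!\big(x/|x|^2\big)$. Substituting $x\mapsto x/|x|^2$ into $g$: the factor $|x/|x|^2|^{-1-n}=|x|^{1+n}$, and $\scalar{x/|x|^2,y}=\scalar{x,y}/|x|^2$. Hence $g\!\big(x/|x|^2\big)=(1-n)|x|^{1+n}\cdot|x|^{-2}\scalar{x,y}=(1-n)|x|^{n-1}\scalar{x,y}$, and therefore $\mathcal{K}[g](x)=|x|^{1-n}\cdot(1-n)|x|^{n-1}\scalar{x,y}=(1-n)\scalar{x,y}$. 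Now I would rewrite this in terms of $w$ and $\lambda$: with $\lambda=\tfrac{n-1}{2}$ we have $1-n=-2\lambda$, and $\scalar{x,y}=w\,|x||y|$ by~\eqref{angle}, so $\mathcal{K}\scalar{y,\nabla_x}\mathcal{K}[1]=-2\lambda\, w\,|x||y|$.

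Finally I would identify $-2\lambda w$ with $-C_1^\lambda(w)$: from the explicit formula~\eqref{explicitgegenbauer} (or by noting $C_1^\lambda(z)=2\lambda z$, which follows from $C_0^\lambda\equiv 1$ together with~\eqref{G1}), we get $C_1^\lambda(w)=2\lambda w$, so the expression equals $-C_1^\lambda(w)|x||y|$. For the last equality, I would use~\eqref{zonalGegenbauer}: $Z_1^\lambda(x,y)=\tfrac{1+\lambda}{\lambda}C_1^\lambda(w)\,|x||y|$, so $C_1^\lambda(w)|x||y|=\tfrac{\lambda}{\lambda+1}Z_1^\lambda(x,y)$, giving $\mathcal{K}\scalar{y,\nabla_x}\mathcal{K}[1]=-\tfrac{\lambda}{\lambda+1}Z_1^\lambda(x,y)$, as claimed.

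There is no serious obstacle here; the only things to watch are bookkeeping details — keeping the homogeneity exponents straight through the two nested Kelvin inversions (degree $0$ becomes degree $1-n$ after the first $\mathcal{K}$, the directional derivative drops it to $-n$ in the sense of homogeneity, and the second $\mathcal{K}$ brings it back to degree $1$, consistent with $\scalar{x,y}$ being linear), and correctly translating between the normalizations $n$, $\lambda=\tfrac{n-1}{2}$, and the Gegenbauer/zonal conventions. The hypothesis $n>1$ guarantees $\lambda>0$ so that $\tfrac{\lambda}{\lambda+1}$ and the formula~\eqref{zonalGegenbauer} make sense; the case $n=1$ ($\lambda=0$) is excluded precisely because of the degeneracy already noted around~\eqref{chebyshev}.
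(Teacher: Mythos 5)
Your proof is correct and follows essentially the same direct computation as the paper: apply the inner $\mathcal{K}$ to get $|x|^{1-n}$, differentiate to get $(1-n)|x|^{-1-n}\scalar{x,y}$, and let the outer $\mathcal{K}$ restore homogeneity degree $1$, yielding $(1-n)\scalar{x,y}=-2\lambda w\,|x||y|=-C_1^\lambda(w)|x||y|$. The only quibbles are cosmetic: your sum should run over $j=1,\dots,n+1$ (the variables live in $\mR^{n+1}$), and note that your sign bookkeeping is actually more careful than the paper's own displayed last line, which omits the minus sign in front of $C_1^\lambda$ even though the lemma's statement has it.
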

\begin{proof}
	We have 
		\begin{align*}
		\left(\mathcal{K}\scalar{y,\nabla_x}\mathcal{K}\right)\left[1\right]&=\mathcal{K}\scalar{y,\nabla_x}\frac{1}{\norm{x}^{n-1}}
			=\mathcal{K}\left[\sum_{j=1}^{n+1}y_j\diff{x_j}\frac{1}{\norm{x}^{n-1}}\right]\\
			&=\mathcal{K}\left[\sum_{j=1}^{n+1}-(n-1)x_jy_j\norm{x}^{-n-1}\right]\\
			&=-(n-1)\scalar{x,y}=C_1^\lambda\left(\frac{\scalar{x,y}}{\norm{x}\norm{y}}\right)|x||y|.
	\end{align*}
\end{proof}

Now we present a theorem that expresses any Gegenbauer polynomial (and hence any zonal harmonic), through the iterated action of the ladder operator $\mathcal{K}\scalar{y,\nabla_x}\mathcal{K}$ on the constant function equal to $1$.

\begin{theorem}
\label{th1}
The zonal harmonic of degree $k$ in $\mR^{n+1}$ can be obtained by 
\begin{align*}
	\left(\mathcal{K}\scalar{y,\nabla_x}\mathcal{K}\right)^k\left[1\right]=(-1)^{k}k!C_k^\lambda(w)(|x||y|)^{k}=(-1)^{k}k!\frac{\lambda}{\lambda+k}Z^{\lambda}_{k}(x,y),
\end{align*}
where $w=\frac{\scalar{x,y}}{\norm{x}\norm{y}}$, $\lambda=\frac{n-1}{2}$ and $C_k^\lambda(t)$ is the Gegenbauer polynomial of degree $k$ and order $\lambda$.
\end{theorem}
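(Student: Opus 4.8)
The plan is to prove the formula by induction on $k$, using Lemma~\ref{lem1} as the base case ($k=1$; the case $k=0$ being trivial). The inductive step reduces to a single \emph{raising identity} for Gegenbauer polynomials: namely that, up to an explicit constant, $\mathcal{K}\scalar{y,\nabla_x}\mathcal{K}$ sends $C^{\lambda}_{\ell}(w)(|x||y|)^{\ell}$ to $C^{\lambda}_{\ell+1}(w)(|x||y|)^{\ell+1}$. Concretely, I would aim to establish
\begin{equation*}
\left(\mathcal{K}\scalar{y,\nabla_x}\mathcal{K}\right)\left[C^{\lambda}_{\ell}(w)(|x||y|)^{\ell}\right]=-(\ell+1)\,C^{\lambda}_{\ell+1}(w)(|x||y|)^{\ell+1},
\end{equation*}
from which the theorem follows immediately: applying the operator $k$ times to $1=C^{\lambda}_{0}(w)$ produces the factor $(-1)^{k}(1)(2)\cdots(k)=(-1)^k k!$, and then~\eqref{zonalGegenbauer} converts $C^{\lambda}_{k}(w)(|x||y|)^{k}$ into $\frac{\lambda}{\lambda+k}Z^{\lambda}_{k}(x,y)$.

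To prove the raising identity I would compute directly. First, since $C^{\lambda}_{\ell}(w)(|x||y|)^{\ell}$ is homogeneous of degree $\ell$ in $x$, the formula~\eqref{kelvinhomo} gives $\mathcal{K}\left[C^{\lambda}_{\ell}(w)(|x||y|)^{\ell}\right]=|x|^{2-(n+1)-2\ell}C^{\lambda}_{\ell}(w)(|x||y|)^{\ell}=|x|^{1-n-\ell}|y|^{\ell}C^{\lambda}_{\ell}(w)$. Then I apply $\scalar{y,\nabla_x}=\sum_{j}y_{j}\partial_{x_j}$. Using $\partial_{x_i}|x|^{\alpha}=\alpha x_i|x|^{\alpha-2}$ and the identity~\eqref{B2} for $\partial_{x_i}(w)$, together with $\sum_j y_j x_j=\langle x,y\rangle=w|x||y|$ and $\sum_j y_j^2=|y|^2$, one finds
\begin{equation*}
\scalar{y,\nabla_x}\left(|x|^{1-n-\ell}|y|^{\ell}C^{\lambda}_{\ell}(w)\right)=|x|^{-n-\ell}|y|^{\ell+1}\left[(1-n-\ell)w\,C^{\lambda}_{\ell}(w)+(1-w^2)\frac{d}{dw}C^{\lambda}_{\ell}(w)\right].
\end{equation*}
Finally, the outer Kelvin inversion, applied to this function which is homogeneous of degree $1-n-\ell-1=-n-\ell$ in $x$ (and recalling $\mathcal{K}^2=1$, or using~\eqref{kelvinhomo} again with exponent $-n-\ell$), restores the homogeneity to degree $\ell+1$, multiplying by $|x|^{2-(n+1)+2(n+\ell)}=|x|^{n+2\ell+1}$; collecting powers one gets $(|x||y|)^{\ell+1}$ times the bracketed expression.

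The crux is then the purely one-variable identity
\begin{equation*}
(1-n-\ell)\,t\,C^{\lambda}_{\ell}(t)+(1-t^2)\frac{d}{dt}C^{\lambda}_{\ell}(t)=-(\ell+1)\,C^{\lambda}_{\ell+1}(t),\qquad \lambda=\tfrac{n-1}{2},
\end{equation*}
which I would verify from the recurrence relations collected in the introduction. Writing $\frac{d}{dt}C^{\lambda}_{\ell}=2\lambda C^{\lambda+1}_{\ell-1}$ by~\eqref{G1}, the term $(1-t^2)C^{\lambda+1}_{\ell-1}$ is handled by~\eqref{g2} (with $\ell\mapsto \ell-1$), which expresses it through $C^{\lambda}_{\ell-1}$ and $C^{\lambda}_{\ell+1}$; the term $t\,C^{\lambda}_{\ell}(t)$ is dealt with by the standard Gegenbauer three-term recurrence (or by combining~\eqref{G3bis} and~\eqref{G3}). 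Substituting $1-n-\ell=1-2\lambda-\ell=-(\ell+2\lambda-1)$ and simplifying, the $C^{\lambda}_{\ell-1}$ contributions should cancel and the $C^{\lambda}_{\ell+1}$ coefficient should collapse to $-(\ell+1)$. I expect this algebraic bookkeeping — matching the Gamma-function coefficients so that the intermediate $C^{\lambda}_{\ell-1}$ terms cancel exactly — to be the main obstacle; everything else is a mechanical differentiation exercise. A useful sanity check is $\ell=0$, where the identity reads $(1-n)t=-C^{\lambda}_{1}(t)=-2\lambda t=-(n-1)t$, consistent with Lemma~\ref{lem1}.
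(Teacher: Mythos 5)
Your proposal is correct and follows essentially the same route as the paper: induction on $k$ with Lemma~\ref{lem1} as base case, the key step being the raising identity $\left(\mathcal{K}\scalar{y,\nabla_x}\mathcal{K}\right)\left[C^{\lambda}_{\ell}(w)(|x||y|)^{\ell}\right]=-(\ell+1)C^{\lambda}_{\ell+1}(w)(|x||y|)^{\ell+1}$, proved by differentiating the Kelvin-inverted polynomial and reducing the resulting one-variable expression via \eqref{G1}, \eqref{g2} and the recurrence for $tC^{\lambda}_{\ell}$, with the $C^{\lambda}_{\ell-1}$ terms cancelling precisely because $2\lambda=n-1$. The algebraic bookkeeping you flag as the main obstacle does work out exactly as you predict.
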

%{\color{red}we haven't mentioned the "reproducing kernel" before... maybe sticking to zonal harmonic}

%\begin{theorem}
%The reproducing kernel for spherical harmonics can be obtained by
%$$
%\frac{\big(\mathcal{K}\langle y,\nabla_x\rangle \mathcal{K}\big)^m}{m!}\Big[1\Big]=(-1)^{m}C_m^\lambda\left(\frac{\langle x,y\rangle}{|x||y|}\right)(|x||y|)^m.
%$$
%\end{theorem}

\begin{proof}
We will prove the statement by induction. By the definition of the Kelvin inversion~\eqref{kel} we have
\begin{align*}
	\left(\mathcal{K}\scalar{y,\nabla_x}\mathcal{K}\right)\left[ C_\ell^\lambda(w)(|x||y|)^{\ell}\right]=\mathcal{K}\left[\sum_{j=1}^{n+1}y_j\diff{x_j}\left(\frac{1}{\norm{x}^{n-1}}\norm{x}^{-\ell} C_\ell^\lambda(w)\norm{y}^\ell\right)\right].
\end{align*}
 Therefore, computing the partial derivatives we obtain
\begin{align*}
	\left(\mathcal{K}\scalar{y,\nabla_x}\mathcal{K}\right)\left[ C_\ell^\lambda(w)(|x||y|)^{\ell}\right]&=\mathcal{K}\left[\norm{y}^\ell\sum_{j=1}^{n+1}y_j\left((1-n-\ell)\norm{x}^{-n-1-\ell}x_jC_\ell^\lambda(w)\right.\right.\\
	&\left.\left.+\norm{x}^{1-n-\ell}\left(C_\ell^\lambda(w)\right)'\left(\diff{x_j}w\right)\right)\right].
\end{align*}
Using relation \eqref{G1} and~\eqref{B2} we have %$\diff{x_j}t=\frac{y_j}{\norm{x}\norm{y}}-\frac{tx_j}{\norm{x}^2}$ we have that
\begin{align*}
	\left(\mathcal{K}\scalar{y,\nabla_x}\mathcal{K}\right)\left[C_\ell^\lambda(w)(|x||y|)^{\ell}\right]&=\mathcal{K}\left[\norm{x}^{-n-\ell}\norm{y}^{\ell+1}\left((1-n-\ell)wC_\ell^\lambda(w)+2\lambda (1-w^2) C_{\ell-1}^{\lambda+1}(w)\right)\right].
\end{align*}
We apply relation \eqref{G3} on the first term and relation \eqref{g2} on the second one to get
\begin{align*}
&\left(\mathcal{K}\scalar{y,\nabla_x}\mathcal{K}\right)\left[C_\ell^\lambda(w)(|x||y|)^{\ell}\right]=\\
&=\mathcal{K}\left[\norm{x}^{-n-\ell}\norm{y}^{\ell+1}\left((1-n-\ell)\frac{1}{2(\ell+\lambda)}\left((\ell+1)C_{\ell+1}^\lambda(w)+(2\lambda+\ell-1)C_{\ell-1}^\lambda(w)\right)\right.\right.\\
&\quad\left.\left.+2\lambda\frac{1}{4\lambda(\ell+\lambda)}\left((2\lambda+\ell-1)(2\lambda+\ell)C_{\ell-1}^\lambda(w)-\ell(\ell+1)C_{\ell+1}^\lambda(w)\right)\right)\right]\\
&=\mathcal{K}\left[\norm{x}^{-n-\ell}\norm{y}^{\ell+1}\frac{1}{2(\ell+\lambda)}\left(\left((\ell+1)(1-n-\ell)-(\ell+1)\ell\right)C_{\ell+1}^\lambda(w)\right.\right.\\
&\quad\left.\left.+\left((1-n-\ell)(2\lambda+\ell-1)+(2\lambda+\ell-1)(2\lambda+\ell)\right)C_{\ell-1}^\lambda(w)\right)\right].
\end{align*}
Recalling that $\lambda=\frac{n-1}{2}$, the coefficient of $C_{\ell-1}^\lambda$ vanishes and we get
\begin{align*}
&\left(\mathcal{K}\scalar{y,\nabla_x}\mathcal{K}\right)\left[ C_\ell^\lambda(w)(|x||y|)^{\ell}\right]=\\
&=\mathcal{K}\left[\norm{x}^{-n-\ell}\norm{y}^{\ell+1}\frac{1}{2\ell+n-1}\left(\left((\ell+1)(1-n-\ell)-(\ell+1)\ell\right)C_{\ell+1}^\lambda\right)\right]\\
&=\mathcal{K}\left[-\norm{x}^{-n-\ell}\norm{y}^{\ell+1}(\ell+1)C_{\ell+1}^\lambda\right]\\
&=-(\ell+1)C_{\ell+1}^\lambda(|x||y|)^{\ell+1}.
\end{align*}
This holds for all $\ell=1,\cdots,k-1$ and the claim follows by induction and Lemma \ref{lem1} for the case $\ell=0$.
\end{proof}

\begin{remark}
In \cite{Eel} a version of Theorem \ref{th1} is proven for the reproducing kernel of the space of spherical monogenics, given by a sum of two Gegenbauer polynomials.
\end{remark}

\subsection{Poisson kernel}
We now apply Theorem~\ref{th1} to give a representation of the Poisson kernel in the unit ball of $\mR^{N}$.
First of all, notice that summing Theorem \ref{th1} over all degrees of homogeneity $k$, we get the exponential of the operator $-\mathcal{K}\scalar{y,\nabla_x}\mathcal{K}$. Then, we have the following corollary.
\begin{cor}\label{generating}
For any $x,y\in\mR^{n+1}$ such that $1-2\langle x,y\rangle +|x|^{2}|y|^{2}$ is different from zero and $|x||y|<1$, we have
\begin{align}
\label{cor}
	\left(\exp(-\mathcal{K}\scalar{y,\nabla_x}\mathcal{K})\right)\left[1\right]=\left(\mathcal{K}\exp(-\scalar{y,\nabla_x})\mathcal{K}\right)\left[1\right]=\sum_{k=0}^\infty C_k^\lambda(w)(|x||y|)^{k},
\end{align}
where $\lambda=\frac{n-1}{2}$ and $w=\frac{\scalar{x,y}}{\norm{x}\norm{y}}$.
\end{cor}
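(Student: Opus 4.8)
The plan is to assemble the identity from three ingredients: the involutivity $\mathcal{K}^{2}=1$ of the Kelvin inversion (a direct computation, noted in the excerpt), Theorem~\ref{th1}, and the classical generating function of Gegenbauer polynomials.

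First I would establish the first equality as a formal operator identity. Since $\mathcal{K}^{2}=1$, every power of the conjugated operator collapses, $(\mathcal{K}\scalar{y,\nabla_x}\mathcal{K})^{k}=\mathcal{K}\scalar{y,\nabla_x}^{k}\mathcal{K}$, the interior factors $\mathcal{K}\mathcal{K}$ cancelling in pairs (this already holds for $k=0$, both sides being the identity). Summing the defining exponential series termwise then yields $\exp(-\mathcal{K}\scalar{y,\nabla_x}\mathcal{K})=\mathcal{K}\exp(-\scalar{y,\nabla_x})\mathcal{K}$, and in particular equality of the two values at the constant function $1$.

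Next I would read off the last equality from Theorem~\ref{th1}, which computes $(\mathcal{K}\scalar{y,\nabla_x}\mathcal{K})^{k}[1]=(-1)^{k}k!\,C_{k}^{\lambda}(w)(|x||y|)^{k}$; the signs and factorials cancel against those in the exponential series, so
\[
\exp(-\mathcal{K}\scalar{y,\nabla_x}\mathcal{K})[1]=\sum_{k=0}^{\infty}\frac{(-1)^{k}}{k!}\,(\mathcal{K}\scalar{y,\nabla_x}\mathcal{K})^{k}[1]=\sum_{k=0}^{\infty}C_{k}^{\lambda}(w)(|x||y|)^{k}.
\]
To turn this into an honest equality of functions I would invoke the generating function $\sum_{k\geq 0}C_{k}^{\lambda}(t)r^{k}=(1-2tr+r^{2})^{-\lambda}$ with $t=w$ and $r=|x||y|$: the Cauchy--Schwarz inequality gives $|w|\leq 1$, the hypothesis gives $r<1$, so the series converges absolutely, while $1-2\scalar{x,y}+|x|^{2}|y|^{2}=1-2wr+r^{2}\neq 0$ keeps its sum away from the pole/branch point. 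As an independent check one can evaluate the middle expression directly: $\exp(-\scalar{y,\nabla_x})$ is the translation operator $f(x)\mapsto f(x-y)$ (Taylor's formula) and $\mathcal{K}[1](x)=|x|^{1-n}$, so
\[
\bigl(\mathcal{K}\exp(-\scalar{y,\nabla_x})\mathcal{K}\bigr)[1](x)=\frac{1}{|x|^{n-1}}\Bigl|\tfrac{x}{|x|^{2}}-y\Bigr|^{1-n}=\bigl(1-2\scalar{x,y}+|x|^{2}|y|^{2}\bigr)^{-\lambda},
\]
since $\lambda=\frac{n-1}{2}$, which matches the generating function.

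The only delicate point is the passage from the formal exponential of the operator to a convergent series of functions, i.e.\ justifying the termwise evaluation of $\exp(-\mathcal{K}\scalar{y,\nabla_x}\mathcal{K})$ on $1$; this is supplied precisely by the absolute convergence of the Gegenbauer series, so the two hypotheses in the statement are exactly what is needed and nothing further is required.
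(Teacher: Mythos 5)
Your argument is correct and follows essentially the same route as the paper: summing Theorem~\ref{th1} against the exponential series (with the signs and factorials cancelling), identifying the result with the Gegenbauer generating function $(1-2rw+r^2)^{-\lambda}$, and confirming this by computing $\mathcal{K}\exp(-\scalar{y,\nabla_x})\mathcal{K}[1]$ directly via the translation-operator interpretation and the Kelvin inversion of $|x-y|^{1-n}$. The only difference is organizational --- you make explicit the collapse $(\mathcal{K}\scalar{y,\nabla_x}\mathcal{K})^{k}=\mathcal{K}\scalar{y,\nabla_x}^{k}\mathcal{K}$ from $\mathcal{K}^{2}=1$ and the convergence justification, both of which the paper leaves implicit.
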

\begin{proof}
The right-hand side of~\eqref{cor} is the generating function of the Gegenbauer polynomials. This is known to equal
\begin{align}
\label{gen}
(1-2rw+r^2)^{-\lambda}=\sum_{k=0}^\infty C_k^\lambda(w)r^k,
\end{align}
where $r=\norm{x}\norm{y}$, see e.g.~\cite[Chapter IV.2]{SW}. This can be re-established by interpreting $\exp(-\scalar{y,\nabla_x})$ as a translation operator:   $\exp(-\scalar{y,\nabla_x})f =f(x-y)$.
We then have indeed 
\begin{align*}
	\left(\mathcal{K}\exp(-\scalar{y,\nabla_x})\mathcal{K}\right)\left[1\right]&=\left(\mathcal{K}\exp(-\scalar{y,\nabla_x})\right)\frac{1}{\norm{x}^{n-1}}\\
	&=\mathcal{K}\left(\frac{1}{\norm{x-y}^{n-1}}\right)=\frac{1}{\norm{x}^{n-1}}\frac{1}{\norm{\frac{x}{\norm{x}^2}-y}^{n-1}}\\
	&=\left(1-2\norm{x}\norm{y}\frac{\scalar{x,y}}{\norm{x}\norm{y}}+\norm{x}^2\norm{y}^2\right)^{-\lambda}=\left(1-2rw+r^2\right)^{-\lambda}.
\end{align*}
\end{proof}

Recall now, for instance from~\cite[formula  6.21, p. 122]{axler}, the formula for the (extended) Poisson kernel in $\mR^{N}$,
$$
P(x,y)=\frac{1-|x|^{2}|y|^{2}}{(1-2\langle x,y\rangle +|x|^{2}|y|^{2})^{N/2}},
$$
defined for all $x,y\in\mR^{N}$ such that the denominator is different from zero. 
%In particular, if $|y|=1$, then
%$P(x,y)=P(y,x)=P(|x|y,x/|x|)$.
Now, if $\lambda=\frac{N-2}{2}$, from~\cite[Theorem 5.33]{axler}, we have 
$$
P(x,y)=\sum_{k=0}^{\infty}Z^{\lambda}_{k}(x,y)=\sum_{k=0}^{\infty}\frac{k+\lambda}{\lambda}C^{\lambda}_{k}(w)(|x||y|)^{k}.
$$
One can find the Poisson kernel from the generating function~\eqref{gen} (with $\norm{x}=1$ and hence $r=\norm{y}<1$) by acting with $1+\frac{r}{\lambda}\partial_r$.

\begin{corollary}
Set $r=|x||y|$, then, the (extended) Poisson kernel of $\mR^{n+1}$ is given by
$$
P(x,y)=\left(1+\frac{r}{\lambda}\partial_r\right)\left(\mathcal{K}\exp(-\scalar{y,\nabla_x})\mathcal{K}\right)\left[1\right]
$$
\end{corollary}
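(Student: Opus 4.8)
The strategy is to combine the generating-function identity for Gegenbauer polynomials from Corollary~\ref{generating} with the series expansion of the Poisson kernel into zonal harmonics, and then to recognize the link between the two as the action of the first-order differential operator $1+\frac{r}{\lambda}\partial_r$ in the scalar variable $r=|x||y|$. Concretely, I would start from
\[
\left(\mathcal{K}\exp(-\scalar{y,\nabla_x})\mathcal{K}\right)\left[1\right]=\sum_{k=0}^\infty C_k^\lambda(w)r^k,
\]
which is exactly the content of Corollary~\ref{generating} (with $r=|x||y|$, and the analytic side condition $|x||y|<1$ ensuring convergence), and apply $1+\frac{r}{\lambda}\partial_r$ term by term. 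Since $r\partial_r(r^k)=k r^k$, each summand $C_k^\lambda(w)r^k$ is sent to $\left(1+\frac{k}{\lambda}\right)C_k^\lambda(w)r^k=\frac{\lambda+k}{\lambda}C_k^\lambda(w)r^k$, which by~\eqref{zonalGegenbauer} is precisely $Z_k^\lambda(x,y)$. Summing over $k$ gives $\sum_{k=0}^\infty Z_k^\lambda(x,y)$, which by~\cite[Theorem 5.33]{axler} (quoted in the excerpt) equals the extended Poisson kernel $P(x,y)$ of $\mR^{n+1}$ when $\lambda=\frac{n+1-2}{2}=\frac{n-1}{2}$, consistent with the value of $\lambda$ used throughout. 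This chain of equalities is the entire proof.

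There is, however, one substantive point to check rather than wave away: the operator $1+\frac{r}{\lambda}\partial_r$ is applied \emph{after} the Kelvin/translation manipulation has produced the closed form $(1-2rw+r^2)^{-\lambda}$, and one should make sure that differentiating this closed form in $r$ genuinely reproduces the term-by-term result. So I would also verify directly that
\[
\left(1+\frac{r}{\lambda}\partial_r\right)(1-2rw+r^2)^{-\lambda}=\frac{1-r^2}{(1-2rw+r^2)^{\lambda+1}},
\]
by computing $\partial_r(1-2rw+r^2)^{-\lambda}=-\lambda(2r-2w)(1-2rw+r^2)^{-\lambda-1}$, multiplying by $r/\lambda$, and adding $(1-2rw+r^2)^{-\lambda}=(1-2rw+r^2)(1-2rw+r^2)^{-\lambda-1}$; the numerator becomes $(1-2rw+r^2)+r(2r-2w)(-1)\cdot(-1)$ — more carefully, $(1-2rw+r^2)-(2r^2-2rw)=1-r^2$ — giving the claimed Poisson kernel with $r=|x||y|$. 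One then notes $1-r^2=1-|x|^2|y|^2$ and $1-2rw+r^2=1-2\langle x,y\rangle+|x|^2|y|^2$, matching the displayed formula for $P(x,y)$ exactly (with $N=n+1$, so $N/2=\lambda+1$).

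The only genuine obstacle, and it is a mild one, is bookkeeping about domains of validity: Corollary~\ref{generating} requires $|x||y|<1$ and $1-2\langle x,y\rangle+|x|^2|y|^2\neq 0$, and the term-by-term application of $1+\frac{r}{\lambda}\partial_r$ to the power series is justified on that domain by uniform convergence of the differentiated series on compact subsets (a standard fact for power series inside their radius of convergence); on the same domain the closed form is real-analytic, so the two computations of $\left(1+\frac{r}{\lambda}\partial_r\right)(1-2rw+r^2)^{-\lambda}$ — the term-by-term one and the direct one — must agree. Since the statement of the corollary already inherits the hypothesis $|x||y|<1$ implicitly through Corollary~\ref{generating}, no further care is needed. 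Putting the three equalities together — Corollary~\ref{generating}, the action of $1+\frac{r}{\lambda}\partial_r$, and \cite[Theorem 5.33]{axler} — yields
\[
P(x,y)=\left(1+\frac{r}{\lambda}\partial_r\right)\left(\mathcal{K}\exp(-\scalar{y,\nabla_x})\mathcal{K}\right)\left[1\right],
\]
which is the assertion.
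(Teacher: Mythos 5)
Your proposal is correct and follows essentially the same route as the paper: the paper's formal proof is exactly your closed-form verification that $\left(1+\frac{r}{\lambda}\partial_r\right)(1-2rw+r^2)^{-\lambda}=\frac{1-r^2}{(1-2rw+r^2)^{\lambda+1}}=P(x,y)$, while your term-by-term argument via $Z_k^\lambda=\frac{\lambda+k}{\lambda}C_k^\lambda(w)r^k$ and Axler's Theorem 5.33 is the heuristic the paper itself states in the text immediately preceding the corollary. Nothing is missing.
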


\begin{proof}
If $r=|x||y|$, then clearly $\langle x,y\rangle=rw$. Thanks to Corollary~\ref{generating} we have that
$$
\left(1+\frac{r}{\lambda}\partial_r\right)\left(\mathcal{K}\exp(-\scalar{y,\nabla_x})\mathcal{K}\right)\left[1\right]=\left(1+\frac{r}{\lambda}\partial_r\right)(1-2rw+r^2)^{-\lambda}
$$
Then, we obtain the result from the following computations
\begin{align*}
	\left(1+\frac{r}{\lambda}\partial_r\right)\left(1-2rw+r^2\right)^{-\lambda}&=(1-2rw+r^2)^{-\lambda}-\lambda\frac{r}{\lambda}2(r-w)\left(1-2rw+r^2\right)^{-\lambda-1}\\
	&=(1-2rw+r^2)^{-\lambda-1}\left((1-2rw+r^2)-2r(r-w)\right)\\
	&=\frac{1-r^2}{(1-2rw+r^2)^{\lambda+1}}=P(x,y).
\end{align*}

\end{proof}

\section{Actions of iterated Laplacians on Gegenbauer polynomials}\label{iterated}

%In this section I would put the result and the general formula with any $\lambda$ with proof in the appendix

In this section we give general formulas for the action of Laplacians on Gegenbauer polynomials. This will allow us to connect the reproducing kernels of spherical harmonics for different dimensions with each other.

A first strategy to obtain such a result would be to explicitly compute the action of several Laplacians on Gegenbauer polynomials. It becomes however rapidly more complicated to keep track of all arising coefficients as is shown in Appendix~\ref{iterate}. Therefore we use a different path.

%
%\textcolor{red}{Change all the notation in this section! from $x_{1},\dots, x_{n}$ to $x=x_{0}+x_{v}=x_{0}+x_{1}+\dots x_{n}

%\textcolor{red}{rewrite in view of appendix}
%In principle we can now iterate Proposition \ref{dirlap} to obtain the action of several Laplacians on Gegenbauer polynomials. It becomes however rapidly more complicated to keep track of all arising coefficients. Therefore we use a different path.

We start with a few lemmas.

\begin{lemma}
\label{Gegexp}
Assume $\lambda >0$. Then
\begin{equation}
\label{recgeg}
C^{\lambda}_{k+2m}(t) = \sum_{j=0}^m \alpha^{m, \lambda}_j C^{\lambda+m}_{k+2(m-j)}(t)
\end{equation}
with
\[
\alpha^{m, \lambda}_m = (-1)^m \frac{\Gamma(\lambda+m)}{\Gamma(\lambda)} \frac{\Gamma(\lambda+k+m+1)}{\Gamma(\lambda+k+2m +1)}.
\]
\end{lemma}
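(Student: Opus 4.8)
The plan is to prove the expansion \eqref{recgeg} by induction on $m$, where the base case $m=0$ is the trivial identity $C^{\lambda}_{k}(t)=C^{\lambda}_{k}(t)$ (with $\alpha^{0,\lambda}_0=1$, consistent with the stated formula for $\alpha^{m,\lambda}_m$ since $\Gamma(\lambda+k+1)/\Gamma(\lambda+k+1)=1$). The key tool for the inductive step is the three-term relation \eqref{G3bis}, which I would rewrite in the form
\[
C^{\lambda}_{m}(z)=\frac{\lambda+m}{\lambda}C^{\lambda+1}_{m}(z)-\frac{\lambda+m}{\lambda}C^{\lambda+1}_{m-2}(z),
\]
or more usefully its inverse: \eqref{G3bis} lets us express $C^{\lambda}_{\ell}$ as a telescoping sum of $C^{\lambda+1}$'s of degrees $\ell,\ell-2,\ell-4,\dots$. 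Iterating $m$ times expresses $C^{\lambda}_{k+2m}$ in terms of $C^{\lambda+m}_{k+2(m-j)}$ for $j=0,\dots,m$, which is exactly the shape of \eqref{recgeg}; the only real content is identifying the coefficients, and in particular the extreme coefficient $\alpha^{m,\lambda}_m$ multiplying the lowest-degree term $C^{\lambda+m}_{k}$.

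Concretely, I would proceed as follows. Assume \eqref{recgeg} holds for $m$. Apply \eqref{G3bis} (at parameter $\lambda+m$) to each summand $C^{\lambda+m}_{k+2(m-j)}(t)$ to raise the parameter to $\lambda+m+1$: each such term becomes a combination of $C^{\lambda+m+1}_{k+2(m-j)}$ and $C^{\lambda+m+1}_{k+2(m-j)-2}=C^{\lambda+m+1}_{k+2(m+1-(j+1))}$. Collecting like terms gives \eqref{recgeg} for $m+1$ with coefficients satisfying a simple recursion of the form
\[
\alpha^{m+1,\lambda}_{j}=\frac{\lambda+m+k+2(m-j)}{\lambda+m}\,\alpha^{m,\lambda}_{j}-\frac{\lambda+m+k+2(m-j+1)}{\lambda+m}\,\alpha^{m,\lambda}_{j-1}.
\]
For the top coefficient only the second term contributes (since $\alpha^{m,\lambda}_{m+1}=0$), so
\[
\alpha^{m+1,\lambda}_{m+1}=-\frac{\lambda+m+k+2}{\lambda+m}\,\alpha^{m,\lambda}_{m}\cdot(\text{with }j=m+1)\ \Longrightarrow\ \alpha^{m+1,\lambda}_{m+1}=-\frac{\lambda+k+m+1}{\lambda+m}\,\alpha^{m,\lambda}_{m},
\]
where I have set $j=m+1$ in the degree bookkeeping $k+2(m-j+1)=k$ wait---more carefully, the term feeding $C^{\lambda+m+1}_{k}$ comes from lowering the degree of $C^{\lambda+m}_{k+2}$ (the $j=m-1$? no, the $j=m$ summand has degree $k$ already and stays, the degree-$k+2$ summand is $j=m-1$)... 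I would double-check the index bookkeeping here, but the upshot is a one-term recursion $\alpha^{m+1,\lambda}_{m+1}=-\frac{\lambda+k+m+1}{\lambda+m}\,\alpha^{m,\lambda}_{m}$, and unwinding it from $\alpha^{0,\lambda}_0=1$ telescopes to
\[
\alpha^{m,\lambda}_m=(-1)^m\prod_{i=0}^{m-1}\frac{\lambda+k+i+1}{\lambda+i}=(-1)^m\frac{\Gamma(\lambda+k+m+1)}{\Gamma(\lambda+k+1)}\cdot\frac{\Gamma(\lambda)}{\Gamma(\lambda+m)},
\]
which after rearranging is exactly the claimed expression $(-1)^m\frac{\Gamma(\lambda+m)}{\Gamma(\lambda)}\frac{\Gamma(\lambda+k+m+1)}{\Gamma(\lambda+k+2m+1)}$ --- here I must reconcile the two forms: the product should instead run over the parameters actually appearing, and the stated answer suggests the denominator shift is to $\lambda+k+2m+1$, so the correct recursion factor is likely $-\frac{\lambda+k+2m-1}{\lambda+m}$ or similar. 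Pinning down this exact factor from the degree indices in \eqref{G3bis} is the one place where care is needed.

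The main obstacle, then, is purely combinatorial: correctly tracking which summand of the degree-$m$ expansion contributes to the lowest-degree term $C^{\lambda+m+1}_{k}$ after applying \eqref{G3bis}, and extracting the precise multiplicative factor in the resulting one-term recursion for $\alpha^{m,\lambda}_m$. Once that factor is identified, the product telescopes into a ratio of Gamma functions and matches the stated formula. I would not need the other coefficients $\alpha^{m,\lambda}_j$ for $j<m$ explicitly---the lemma only asserts the value of the extreme one---so I would present the recursion for all $j$ but only solve it in closed form for $j=m$. An alternative, if the induction bookkeeping proves error-prone, is to compare the coefficient of the top power $t^{k+2m}$ (or the bottom power, depending on parity) on both sides of \eqref{recgeg} using the explicit formula \eqref{explicitgegenbauer}, which directly isolates $\alpha^{m,\lambda}_m$ as the leading coefficient relation between $C^{\lambda}_{k+2m}$ and $C^{\lambda+m}_{k}$; but the inductive route via \eqref{G3bis} is cleaner and I would lead with it.
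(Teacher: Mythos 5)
Your overall strategy (iterating the contiguous relation \eqref{G3bis} to raise the parameter by one and split off a degree-lowered term, $m$ times) is exactly the paper's, but the one piece of actual content in the lemma --- the value of $\alpha^{m,\lambda}_m$ --- is not established by your argument, and you say so yourself. There are two concrete errors in the bookkeeping. First, \eqref{G3bis} reads $C^{\mu}_{N}=\frac{\mu}{\mu+N}\bigl(C^{\mu+1}_{N}-C^{\mu+1}_{N-2}\bigr)$, so the factor picked up when splitting a term at parameter $\mu=\lambda+i$ and degree $N$ is $\frac{\lambda+i}{\lambda+i+N}$, with the parameter in the \emph{numerator}; your recursion factor $-\frac{\lambda+k+m+1}{\lambda+m}$ has it in the denominator. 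Second, the coefficients $\alpha^{m,\lambda}_j$ depend on $k$ (suppressed in the notation), and in an induction on $m$ the left-hand side changes from $C^{\lambda}_{k+2m}$ to $C^{\lambda}_{k+2m+2}$; your step instead further expands $C^{\lambda}_{k+2m}$ down to degree $k-2$, which is the level-$(m+1)$ statement with $k$ replaced by $k-2$. Ignoring that shift is precisely why your telescoped product comes out as $(-1)^m\frac{\Gamma(\lambda)}{\Gamma(\lambda+m)}\frac{\Gamma(\lambda+k+m+1)}{\Gamma(\lambda+k+1)}$ rather than the claimed $(-1)^m\frac{\Gamma(\lambda+m)}{\Gamma(\lambda)}\frac{\Gamma(\lambda+k+m+1)}{\Gamma(\lambda+k+2m+1)}$; these are genuinely different, and you leave the discrepancy unresolved. (Your fallback of comparing extreme powers of $t$ via \eqref{explicitgegenbauer} does not rescue this either: the top power $t^{k+2m}$ isolates $\alpha^{m,\lambda}_0$, and every term $C^{\lambda+m}_{k+2(m-j)}$ contributes to the bottom power, so neither comparison isolates $\alpha^{m,\lambda}_m$.)

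The clean repair is to drop the induction on the statement and, as the paper does, keep the left-hand side fixed at $C^{\lambda}_{k+2m}$ while applying \eqref{G3bis} to the right-hand side $m$ times, raising the parameter from $\lambda$ to $\lambda+m$. Each application either keeps the degree or lowers it by $2$, so after $m$ steps the degrees run over $k+2m, k+2m-2,\dots,k$, and the extreme term $C^{\lambda+m}_{k}$ is reached only along the path that takes the degree-lowering (minus) branch at every step. At step $i$ that branch is applied to $C^{\lambda+i}_{k+2m-2i}$ and contributes the factor $-\frac{\lambda+i}{\lambda+k+2m-i}$, whence
\[
\alpha^{m,\lambda}_m=(-1)^m\prod_{i=0}^{m-1}\frac{\lambda+i}{\lambda+k+2m-i}
=(-1)^m\,\frac{\Gamma(\lambda+m)}{\Gamma(\lambda)}\,\frac{\Gamma(\lambda+k+m+1)}{\Gamma(\lambda+k+2m+1)},
\]
since the denominators are $\lambda+k+2m,\lambda+k+2m-1,\dots,\lambda+k+m+1$. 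If you insist on a genuine induction on $m$, the correct step is to apply \eqref{G3bis} once to the \emph{left-hand side}, $C^{\lambda}_{k+2m+2}=\frac{\lambda}{\lambda+k+2m+2}\bigl(C^{\lambda+1}_{k+2m+2}-C^{\lambda+1}_{k+2m}\bigr)$, and invoke the hypothesis for $(\lambda+1,k)$ on the second term, giving $\alpha^{m+1,\lambda}_{m+1}=-\frac{\lambda}{\lambda+k+2m+2}\,\alpha^{m,\lambda+1}_m$, which does reproduce the stated formula.
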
 

\begin{proof}
According to~\eqref{G3bis} we have
\[
C^{\lambda}_{k+2m}(t) = \frac{\lambda}{\lambda+ k + 2m} \left(C^{\lambda+1}_{k+2m}(t)- C^{\lambda+1}_{k+2(m-1)}(t)   \right).
\]
Applying this formula recursively $m$ times to the right-hand side leads to~\eqref{recgeg}. The coefficient $\alpha^{m, \lambda}_m$ arises as the product of all used coefficients with minus sign in~\eqref{G3bis}. Indeed,
\begin{align*}
\alpha^{m, \lambda}_m& = (-1)^m \frac{\lambda}{\lambda + k +2m} \frac{\lambda+1}{\lambda + k +2m-1}  \ldots \frac{\lambda+m-1}{\lambda +k + m+1}\\
& = (-1)^m \frac{\Gamma(\lambda+m)}{\Gamma(\lambda)} \frac{\Gamma(\lambda+k+m+1)}{\Gamma(\lambda+k+2m +1)}.
\end{align*}
\end{proof}
We now state a similar result for Chebyshev polynomials.
\begin{corollary}
Let $T_{p}$ denote the Chebyshev polynomial of the first kind of degree $p$. Then, it holds
\begin{equation*}
2T_{k+2m}(t)=\sum_{j=0}^{m}\widehat{\alpha}_{j}^{m}C^{m}_{k+2(m-j)}(t),
\end{equation*}
where
$$
\widehat{\alpha}_{m}^{m}=(-1)^{m}\Gamma(m)\frac{\Gamma(k+m+1)}{\Gamma(k+2m)}.
$$
\end{corollary}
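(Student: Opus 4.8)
The plan is to deduce this corollary from Lemma~\ref{Gegexp} by a limiting argument, exactly mirroring how~\eqref{chebyshev} is obtained from~\eqref{zonalGegenbauer}. Recall the relation between Chebyshev and Gegenbauer polynomials encoded in~\eqref{chebyshev}: for every $p\geq 1$ one has $2T_{p}(t)=\lim_{\lambda\to 0}\frac{p+\lambda}{\lambda}C^{\lambda}_{p}(t)$. So I would start from~\eqref{recgeg} with $k$ replaced by $k+2m$ being of the form needed, multiply both sides by $\frac{(k+2m)+\lambda}{\lambda}$, and let $\lambda\to0$. On the left this produces $2T_{k+2m}(t)$. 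On the right, the polynomials $C^{\lambda+m}_{k+2(m-j)}(t)$ tend to the perfectly finite limits $C^{m}_{k+2(m-j)}(t)$ (since $\lambda+m\to m>0$), so the only delicate point is the behaviour of the prefactors $\frac{(k+2m)+\lambda}{\lambda}\,\alpha^{m,\lambda}_{j}$ as $\lambda\to0$.

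Next I would analyze these limits. Write $\widehat{\alpha}^{m}_{j}:=\lim_{\lambda\to0}\frac{(k+2m)+\lambda}{\lambda}\,\alpha^{m,\lambda}_{j}$; the corollary only records $\widehat\alpha^m_m$, so the key computation is for $j=m$. From Lemma~\ref{Gegexp},
\[
\frac{(k+2m)+\lambda}{\lambda}\,\alpha^{m,\lambda}_{m}=(-1)^m\,\frac{(k+2m+\lambda)}{\lambda}\,\frac{\Gamma(\lambda+m)}{\Gamma(\lambda)}\,\frac{\Gamma(\lambda+k+m+1)}{\Gamma(\lambda+k+2m+1)}.
\]
The factor $\tfrac{1}{\Gamma(\lambda)}\sim\lambda$ as $\lambda\to0$ (since $\Gamma(\lambda)\sim1/\lambda$), which exactly cancels the $1/\lambda$, so the whole expression has a finite nonzero limit. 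Evaluating: $\lambda/\Gamma(\lambda)\to1$, $k+2m+\lambda\to k+2m$, $\Gamma(\lambda+m)\to\Gamma(m)$, $\Gamma(\lambda+k+m+1)\to\Gamma(k+m+1)$, and $\Gamma(\lambda+k+2m+1)\to\Gamma(k+2m+1)=(k+2m)\,\Gamma(k+2m)$. The factor $k+2m$ then cancels against $\tfrac{1}{(k+2m)\Gamma(k+2m)}$, yielding
\[
\widehat\alpha^{m}_{m}=(-1)^m\,\Gamma(m)\,\frac{\Gamma(k+m+1)}{\Gamma(k+2m)},
\]
which is precisely the claimed value. One could note in passing that for $j<m$ the limits $\widehat\alpha^m_j$ likewise exist and are finite, since each $\alpha^{m,\lambda}_j$ arises from Lemma~\ref{Gegexp} as a product of $m$ recursion coefficients from~\eqref{G3bis}, at least one of which carries a factor $\lambda$ in the numerator (the first application of~\eqref{G3bis} to $C^\lambda_{k+2m}$ contributes $\tfrac{\lambda}{\lambda+k+2m}$), so the $1/\lambda$ is always absorbed.

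There is essentially no main obstacle here — the only thing to be careful about is justifying that one may pass the limit $\lambda\to0$ termwise through the finite sum in~\eqref{recgeg}, which is immediate because each term is a product of a rational function of $\lambda$ times a ratio of Gamma functions times a polynomial in $t$, all of which depend continuously on $\lambda$ near $\lambda=0$ once the removable singularity $\tfrac{1}{\lambda}\cdot\tfrac{1}{\Gamma(\lambda)}$ is resolved; and that $\lambda\mapsto C^{\lambda+m}_{p}(t)$ is continuous at $\lambda=0$, which follows from the explicit formula~\eqref{explicitgegenbauer}. Thus the proof is just: take~\eqref{recgeg}, multiply by $\tfrac{k+2m+\lambda}{\lambda}$, send $\lambda\to0$ using~\eqref{chebyshev} on the left, and identify the surviving constant on the right via the Gamma-function asymptotics above.
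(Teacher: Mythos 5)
Your proof is correct, but it takes a different route from the one the paper has in mind. The paper's proof re-runs the recursive argument of Lemma~\ref{Gegexp} verbatim, simply replacing the first application of~\eqref{G3bis} by the Chebyshev identity~\eqref{chebygegen}, $2T_{p}(t)=C^{1}_{p}(t)-C^{1}_{p-2}(t)$, and then descending from $C^{1}$ to $C^{m}$ in $m-1$ further steps; the leading coefficient is then the product $(-1)^m\prod_{j=2}^{m}\frac{j-1}{k+2m-j+1}=(-1)^m\Gamma(m)\frac{\Gamma(k+m+1)}{\Gamma(k+2m)}$, with no limits involved. You instead treat the corollary as the $\lambda\to 0^{+}$ degeneration of Lemma~\ref{Gegexp} itself: multiply~\eqref{recgeg} by $\frac{k+2m+\lambda}{\lambda}$, use~\eqref{chebyshev} on the left, and compute $\lim_{\lambda\to0}\frac{k+2m+\lambda}{\lambda}\alpha^{m,\lambda}_{m}$ via $\lambda\Gamma(\lambda)^{-1}\cdot\lambda^{-1}=\Gamma(\lambda+1)^{-1}\to1$ and $\Gamma(k+2m+1)=(k+2m)\Gamma(k+2m)$. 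Your Gamma-function bookkeeping is right and reproduces the stated $\widehat{\alpha}^{m}_{m}$, your justification that all $\widehat{\alpha}^{m}_{j}$ have finite limits (because every branch of the recursion carries the overall factor $\frac{\lambda}{\lambda+k+2m}$) is sound, and the continuity of $C^{\lambda+m}_{p}(t)$ in $\lambda$ is clear from~\eqref{explicitgegenbauer}. The trade-off: your argument avoids redoing the recursion and makes the corollary literally a limiting case of the lemma, at the cost of an (easy but nontrivial) analytic limit and of requiring $k+2m\geq1$ so that~\eqref{chebyshev} applies — which is automatic here since $m\geq1$ is forced by the presence of $\Gamma(m)$; the paper's route is purely algebraic and slightly more self-contained. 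Both are valid.
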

\begin{proof}
The proof can be performed as that of Lemma~\ref{Gegexp}, bearing in mind from formula~\eqref{chebygegen} that, $2T_{m}(t)=C^{1}_{m}(t)-C^{1}_{m-2}(t).$

%\textcolor{red}{I add next computations. Later we can hide them.}
%\begin{align*}
%2T_{k+2m}(t)&=\dots-C^{1}_{k+2(m-1)}(t)=\dots (-1)^{2}\frac{1}{1+k+2(m-1)}C^{2}_{k+2(m-2)}(t)\\
%&=\dots (-1)^{3}\frac{1}{1+k+2(m-1)}\frac{1}{2+k+2(m-2)}C^{3}_{k+2(m-3)}(t)\\
%&=\dots (-1)^{4}\frac{1}{1+k+2(m-1)}\frac{2}{2+k+2(m-2)}\frac{3}{3+k+2(m-3)}C^{4}_{k+2(m-4)}(t)\\
%&\vdots\\
%&=\dots(-1)^{m}\frac{(m-1)!}{(k+2m-1)(k+2m-2)\dots(k+2m-(m-1))}C^{m}_{k}(t)\\
%&=\dots(-1)^{m}(m-1)!\frac{(k+m)!}{(k+2m-1)!}C^{m}_{k}(t).
%\end{align*}

\end{proof}

The following result comes from \cite{DBS}.
\begin{lemma}
\label{LapHarm}
Let $H_k \in \cH_k(\mR^N)$ be a spherical harmonic of degree $k$ in $\mR^N$. Then 
\[
\Delta^j \left(|x|^{2\ell} H_k(x) \right) = c^N_{j,\ell, k} |x|^{2\ell-2j} H_k(x)
\]
with
\[
c^N_{j,\ell, k} = \left\{ \begin{array}{ll}0& j > \ell \\
4^j \frac{\ell !}{(\ell-j)!} \frac{\Gamma(k + \ell + N/2)}{\Gamma(k+\ell-j+N/2)}& j \leq \ell.
 \end{array} \right.
\]
\end{lemma}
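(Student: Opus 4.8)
The plan is to establish the case $j=1$ by a direct computation and then iterate. For $j=1$ I would expand $\Delta\big(|x|^{2\ell} H_k\big)$ with the Leibniz rule for the Laplacian,
\[
\Delta(fg) = (\Delta f)\,g + 2\,\langle \nabla f, \nabla g\rangle + f\,\Delta g,
\]
applied to $f = |x|^{2\ell}$ and $g = H_k$. Three ingredients enter: the radial identity $\Delta |x|^{2\ell} = 2\ell(2\ell+N-2)|x|^{2\ell-2}$; Euler's relation $\langle x,\nabla H_k\rangle = k H_k$ (since $H_k$ is homogeneous of degree $k$), which yields $\langle \nabla |x|^{2\ell},\nabla H_k\rangle = 2\ell|x|^{2\ell-2}\langle x,\nabla H_k\rangle = 2\ell k|x|^{2\ell-2}H_k$; and $\Delta H_k = 0$ by harmonicity. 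Collecting the terms gives
\[
\Delta\big(|x|^{2\ell}H_k\big) = \big(2\ell(2\ell+N-2)+4\ell k\big)|x|^{2\ell-2}H_k = 4\ell\Big(\ell+k+\tfrac N2-1\Big)|x|^{2\ell-2}H_k,
\]
which is exactly $c^N_{1,\ell,k}|x|^{2\ell-2}H_k$ with $c^N_{1,\ell,k} = 4\ell(\ell+k+N/2-1) = 4\frac{\ell!}{(\ell-1)!}\frac{\Gamma(k+\ell+N/2)}{\Gamma(k+\ell-1+N/2)}$.

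Next I would argue by induction on $j$. Since $|x|^{2\ell-2}H_k$ is again of the form $|x|^{2\ell'}H_k$ with $\ell'=\ell-1$, applying $\Delta^{j-1}$ to the $j=1$ output and invoking the induction hypothesis produces a telescoping product of $j=1$ coefficients:
\[
\Delta^j\big(|x|^{2\ell}H_k\big) = \Big(\prod_{i=0}^{j-1} c^N_{1,\ell-i,k}\Big)|x|^{2\ell-2j}H_k = \Big(\prod_{i=0}^{j-1} 4(\ell-i)\big(\ell-i+k+\tfrac N2-1\big)\Big)|x|^{2\ell-2j}H_k.
\]
Splitting the product, $\prod_{i=0}^{j-1}(\ell-i) = \ell!/(\ell-j)!$ and $\prod_{i=0}^{j-1}(\ell-i+k+N/2-1) = \Gamma(k+\ell+N/2)/\Gamma(k+\ell-j+N/2)$, so the coefficient equals $4^j\frac{\ell!}{(\ell-j)!}\frac{\Gamma(k+\ell+N/2)}{\Gamma(k+\ell-j+N/2)}$, as claimed.

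Finally, the case $j>\ell$ is automatic: the factor $\prod_{i=0}^{j-1}(\ell-i)$ contains $\ell-\ell=0$ (equivalently, once $\ell'$ reaches $0$ the function $|x|^0 H_k = H_k$ is harmonic and every further Laplacian annihilates it), so $c^N_{j,\ell,k}=0$. I expect no genuine obstacle here; the only points requiring care are the correct use of Euler's homogeneity identity in the cross term and the bookkeeping that converts the two telescoping products into the stated $\Gamma$-quotient. (This statement is also available in~\cite{DBS}.)
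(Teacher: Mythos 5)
Your proof is correct. Note that the paper does not actually prove this lemma -- it simply quotes the result from~\cite{DBS} -- so your direct argument (Leibniz rule for $\Delta$, the radial identity $\Delta|x|^{2\ell}=2\ell(2\ell+N-2)|x|^{2\ell-2}$, Euler's identity for the cross term, harmonicity of $H_k$, then telescoping by induction on $j$) supplies a complete, self-contained verification; the base case coefficient $4\ell(k+\ell+N/2-1)$ and the two telescoping products match the stated $\Gamma$-quotient exactly, and the vanishing for $j>\ell$ is handled correctly.
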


To lighten the notation, in the remainder of this section, the Laplacian in $\mR^{N}$ with respect to the variable $x$ will be
denoted by $\Delta_{x}=\Delta_{x,N}$ and analogously for $y$.

\begin{theorem}\label{thmiterated}
In $\mR^{2(\lambda+m)+2}$ we have
\[
(\Delta_{y}\Delta_{x})^{m}\left[(|x||y|)^{k+2m} C^{\lambda}_{k+2m}(w)\right]= \beta_{k}^{m, \lambda} \left( |x| |y| \right)^k C^{\lambda+m}_{k}(w) 
\]
where
\[
\beta_{k}^{m, \lambda} = (-1)^m 4^{2m} \Gamma(m-1)^2 \frac{\Gamma(\lambda+m)}{\Gamma(\lambda)} \frac{\Gamma(\lambda+k+2m+1)}{\Gamma(\lambda+k+m +1)}=\alpha_{m}^{m,\lambda}(c^{2(\lambda+m)+2}_{m,m,k})^{2}.
\]
and $w = \frac{\langle x,y\rangle}{|x||y|}$.
\end{theorem}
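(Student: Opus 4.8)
The plan is to decompose the polynomial $(|x||y|)^{k+2m}C^{\lambda}_{k+2m}(w)$ into a sum of zonal-type pieces to which Lemma~\ref{LapHarm} applies directly, then track the single surviving term after applying $(\Delta_y\Delta_x)^m$. First I would use Lemma~\ref{Gegexp} to write
\[
(|x||y|)^{k+2m} C^{\lambda}_{k+2m}(w) = \sum_{j=0}^m \alpha^{m,\lambda}_j (|x||y|)^{k+2m} C^{\lambda+m}_{k+2(m-j)}(w).
\]
The key observation is that each summand is, up to the normalizing constant in~\eqref{zonalGegenbauer}, a zonal harmonic $Z^{\lambda+m}_{k+2(m-j)}(x,y)$ in $\mR^{N}$ with $N = 2(\lambda+m)+2$ (so that $\lambda+m = \frac{N-2}{2}$), multiplied by an extra even power of $|x||y|$. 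Concretely, $(|x||y|)^{k+2m}C^{\lambda+m}_{p}(w) = (|x||y|)^{2s}\cdot(|x||y|)^{p}C^{\lambda+m}_p(w)$ with $p = k+2(m-j)$ and $2s = k+2m-p = 2j$; and $(|x||y|)^p C^{\lambda+m}_p(w)$ is a harmonic polynomial of degree $p$ separately in $x$ and in $y$ (being a multiple of the zonal harmonic $Z^{\lambda+m}_p$). So each term has the form $|x|^{2j}|y|^{2j}$ times a bihomogeneous harmonic of degree $p$ in each variable.

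Next I would apply Lemma~\ref{LapHarm} once in the $x$-variable and once in the $y$-variable to each summand, with $N = 2(\lambda+m)+2$, $\ell = j$, $k_{\text{Lemma}} = p = k+2(m-j)$, and exponent $j^{\text{th}}$ power of the Laplacian applied $m$ times --- that is, I apply $\Delta_x^m$ and $\Delta_y^m$. By Lemma~\ref{LapHarm}, $\Delta_x^m(|x|^{2j}H_p(x)) = c^N_{m,j,p}|x|^{2j-2m}H_p(x)$, and $c^N_{m,j,p} = 0$ whenever $m > j$, i.e.\ for every $j<m$. Hence only the $j=m$ term survives, and for it $\ell = j = m$, so $|x|^{2j-2m} = 1$ and the result is a genuine polynomial. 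The surviving term is
\[
\alpha^{m,\lambda}_m \left(c^N_{m,m,k}\right)^2 (|x||y|)^{k} C^{\lambda+m}_{k}(w),
\]
the square arising because the identical computation is done for $x$ and for $y$ (the $x$- and $y$-differentiations commute, and the polynomial factors as a function of $|x|$ times a function of $|y|$ times $C^{\lambda+m}_k(w)\cdot(\text{homogeneous part})$ — more carefully, $(|x||y|)^{k+2m}C^{\lambda+m}_k(w) = |x|^{2m}|y|^{2m}\cdot\big[(|x||y|)^k C^{\lambda+m}_k(w)\big]$ and the bracket is harmonic in each variable, so Lemma~\ref{LapHarm} applies in $x$ with $H = (|x||y|)^k C^{\lambda+m}_k(w)$ viewed as a function of $x$ with $y$ frozen, then in $y$ symmetrically).

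Finally I would identify $\beta^{m,\lambda}_k = \alpha^{m,\lambda}_m (c^N_{m,m,k})^2$ and verify it equals the closed form in the statement by plugging in $\alpha^{m,\lambda}_m = (-1)^m \frac{\Gamma(\lambda+m)}{\Gamma(\lambda)}\frac{\Gamma(\lambda+k+m+1)}{\Gamma(\lambda+k+2m+1)}$ from Lemma~\ref{Gegexp} and $c^N_{m,m,k} = 4^m \frac{m!}{0!}\frac{\Gamma(k+m+N/2)}{\Gamma(k+N/2)}$ with $N/2 = \lambda+m+1$, a routine Gamma-function simplification. The main obstacle I anticipate is bookkeeping rather than conceptual: one must be careful that the ``spherical harmonic'' fed to Lemma~\ref{LapHarm} in the $x$-variable is exactly $(|x||y|)^p C^{\lambda+m}_p(w)$ with $y$ held fixed, check that it is indeed harmonic of degree $p$ in $x$ (which follows from~\eqref{zonalGegenbauer} and the fact that zonal harmonics lie in $\mathcal H_p$), confirm the vanishing $c^N_{m,j,p}=0$ for $j<m$ genuinely kills all lower terms, and match the stated $\Gamma(m-1)^2$ against $(m!)^2$-type factors — here I would double-check the index conventions in Lemma~\ref{LapHarm}, since the displayed $\beta^{m,\lambda}_k$ contains $\Gamma(m-1)^2$ which must be reconciled with $(c^N_{m,m,k})^2$ carrying an $(m!)^2$, suggesting the intended reading of $c$ or a compensating simplification that should be made explicit.
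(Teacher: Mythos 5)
Your proof is correct and follows essentially the same route as the paper: decompose via Lemma~\ref{Gegexp}, observe each summand is $(|x||y|)^{2j}$ times a polynomial harmonic of degree $k+2(m-j)$ in each variable, apply Lemma~\ref{LapHarm} separately in $x$ and $y$ so that only the $j=m$ term survives, and identify $\beta^{m,\lambda}_k=\alpha^{m,\lambda}_m\left(c^{2(\lambda+m)+2}_{m,m,k}\right)^2$. Your flag about the coefficient is also justified: computing that product gives $(-1)^m4^{2m}(m!)^2\frac{\Gamma(\lambda+m)}{\Gamma(\lambda)}\frac{\Gamma(\lambda+k+2m+1)}{\Gamma(\lambda+k+m+1)}$, so the $\Gamma(m-1)^2$ in the displayed closed form is a typo for $\Gamma(m+1)^2$.
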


\begin{proof}
Recall that in $\mR^{2(\lambda+m)+2}$
\[
(|x| |y|)^j C^{\lambda+m}_j (w)\]
is a harmonic polynomial for all $j \in \mN$.

Now we have
\begin{align*}
(\Delta_{y}\Delta_{x})^{m}\left[(|x||y|)^{k+2m} C^{\lambda}_{k+2m}(w)\right] & =  \sum_{j=0}^m \alpha^{m, \lambda}_j (\Delta_{y}\Delta_{x})^{m} \left[(|x||y|)^{k+2m} C^{\lambda+m}_{k+2(m-j)}(w) \right]\\
& =  \sum_{j=0}^m \alpha^{m, \lambda}_j (\Delta_{y}\Delta_{x})^{m} \left[(|x||y|)^{2j} (|x||y|)^{k+2(m-j)} C^{\lambda+m}_{k+2(m-j)}(w) \right]\\
&=\alpha^{m, \lambda}_m (\Delta_{y}\Delta_{x})^{m} \left[(|x||y|)^{2m} (|x||y|)^{k} C^{\lambda+m}_{k}(w) \right]\\
&=  \alpha^{m, \lambda}_m \left( c^{2(\lambda + m)+2}_{m,m, k} \right)^2 (|x||y|)^{k} C^{\lambda+m}_{k}(w)
\end{align*}
where we used Lemma \ref{Gegexp} in the first line and Lemma \ref{LapHarm} in line 3 and 4.

The coefficient
\[
\beta_{k}^{m, \lambda} = \alpha^{m, \lambda}_m \left( c^{2(\lambda + m)+2}_{m,m, k} \right)^2
\]
follows immediately from the expressions for $\alpha^{m, \lambda}_m$ and $c^{2(\lambda + m)+2}_{m,m, k}$.
\end{proof}

We now use Theorem~\ref{thmiterated} to show how to construct zonal harmonic functions in any dimension,
starting from the simpler cases of $\mR^{2}$ and $\mR^{3}$. We start from dimension $3$ since the result is
a straightforward application of the formula just obtained for $\lambda=1/2$.

\begin{corollary}\label{odddim}
In $\mR^{2m+3}$, the zonal harmonic function of degree $k$ can be obtained as
\[
(\Delta_{y}\Delta_{x})^{m}\left[ Z^{1/2}_{k+2m}(x,y)\right]=\widetilde{\beta}_{k}^{m} Z^{1/2+m}_{k}(x,y), 
\]
where $Z^{1/2}_{p}$ denotes the zonal harmonic of degree $p$ in $3$ dimensions and 
\[
\widetilde{\beta}_{k}^{m}=(-1)^{m}\frac{\Gamma(m+1)\Gamma(2m+2)\Gamma(k+m+1)\Gamma(2k+4m+1)}{2(k+2m)(2k+2m+1)^{2}\Gamma(k+2m+1)\Gamma(2k+2m+1)}.
\]
\end{corollary}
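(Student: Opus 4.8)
The plan is to derive Corollary~\ref{odddim} as a direct specialization of Theorem~\ref{thmiterated} to the parameter value $\lambda=1/2$, together with the identification~\eqref{zonalGegenbauer} of zonal harmonics with (normalized) Gegenbauer polynomials. First I would recall that in $\mR^{3}$ the zonal harmonic of degree $p$ is
\[
Z^{1/2}_{p}(x,y)=\frac{p+1/2}{1/2}C^{1/2}_{p}(w)(|x||y|)^{p}=(2p+1)C^{1/2}_{p}(w)(|x||y|)^{p},
\]
so that, applying $(\Delta_{y}\Delta_{x})^{m}$ to $Z^{1/2}_{k+2m}(x,y)$ and pulling the scalar $(2(k+2m)+1)=(2k+4m+1)$ out (the Laplacians commute with multiplication by constants), Theorem~\ref{thmiterated} with $\lambda=1/2$ immediately gives
\[
(\Delta_{y}\Delta_{x})^{m}\left[Z^{1/2}_{k+2m}(x,y)\right]=(2k+4m+1)\,\beta_{k}^{m,1/2}\,(|x||y|)^{k}C^{1/2+m}_{k}(w).
\]

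Next I would convert the right-hand side back into a zonal harmonic in the target dimension. Since $2(\lambda+m)+2=2m+3$ when $\lambda=1/2$, the polynomial $(|x||y|)^{k}C^{1/2+m}_{k}(w)$ is exactly the degree-$k$ harmonic building block in $\mR^{2m+3}$, and by~\eqref{zonalGegenbauer} we have
\[
(|x||y|)^{k}C^{1/2+m}_{k}(w)=\frac{1/2+m}{k+1/2+m}Z^{1/2+m}_{k}(x,y)=\frac{2m+1}{2k+2m+1}Z^{1/2+m}_{k}(x,y).
\]
Substituting this in yields $\widetilde{\beta}_{k}^{m}=(2k+4m+1)\,\beta_{k}^{m,1/2}\,\frac{2m+1}{2k+2m+1}$, and it remains only to plug in the closed form of $\beta_{k}^{m,1/2}$ from Theorem~\ref{thmiterated}, namely
\[
\beta_{k}^{m,1/2}=(-1)^{m}4^{2m}\Gamma(m-1)^{2}\frac{\Gamma(1/2+m)}{\Gamma(1/2)}\frac{\Gamma(k+2m+3/2)}{\Gamma(k+m+3/2)},
\]
and simplify.

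The only real work is this final Gamma-function bookkeeping: one uses the duplication formula $\Gamma(1/2+j)=\frac{(2j)!}{4^{j}j!}\sqrt{\pi}$ (equivalently $\Gamma(z)\Gamma(z+1/2)=2^{1-2z}\sqrt{\pi}\,\Gamma(2z)$) to rewrite each half-integer Gamma appearing in $\beta_{k}^{m,1/2}$ and in the prefactor $\frac{2m+1}{2k+2m+1}$ in terms of factorials/integer Gammas, collects the powers of $4$ and $\sqrt{\pi}$ (which must cancel), and rearranges to reach the stated
\[
\widetilde{\beta}_{k}^{m}=(-1)^{m}\frac{\Gamma(m+1)\Gamma(2m+2)\Gamma(k+m+1)\Gamma(2k+4m+1)}{2(k+2m)(2k+2m+1)^{2}\Gamma(k+2m+1)\Gamma(2k+2m+1)}.
\]
I would keep track of the $(2k+2m+1)^{2}$ in the denominator, one factor of which comes from the $\frac{2m+1}{2k+2m+1}$ conversion and the other from the $c$-coefficient squared inside $\beta$; the factor $(k+2m)$ likewise traces back to $\Gamma(m-1)^{2}$ versus $\Gamma(m+1)\Gamma(2m+2)$ after the duplication identity is applied. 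The main obstacle — such as it is — is purely this: being careful enough with the half-integer-to-integer Gamma conversions and the $4^{j}$, $\sqrt{\pi}$ factors that everything cancels correctly; there is no conceptual difficulty, only the risk of an arithmetic slip, so I would double-check against a small case such as $m=1$.
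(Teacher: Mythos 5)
Your route is exactly the paper's: the official proof is the one--line remark that the corollary is ``a direct consequence of Theorem~\ref{thmiterated} together with formula~\eqref{zonalGegenbauer}'', and your two conversion factors --- $(2k+4m+1)$ pulled out of $Z^{1/2}_{k+2m}$ and $\tfrac{2m+1}{2k+2m+1}$ to repackage $(|x||y|)^kC^{1/2+m}_k(w)$ as $Z^{1/2+m}_k$ --- are both correct, giving the correct intermediate relation $\widetilde{\beta}_k^m=(2k+4m+1)\,\beta_k^{m,1/2}\,\tfrac{2m+1}{2k+2m+1}$. Two caveats, however, about the step ``plug in and simplify to reach the stated formula''.

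First, the expression for $\beta_k^{m,1/2}$ you quote verbatim from Theorem~\ref{thmiterated} contains $\Gamma(m-1)^2$, which already diverges at $m=1$; the factor consistent with $\beta_k^{m,\lambda}=\alpha_m^{m,\lambda}\bigl(c^{2(\lambda+m)+2}_{m,m,k}\bigr)^2$ is $\Gamma(m+1)^2=(m!)^2$ (compare the even-dimensional coefficient $\widehat{\beta}_k^m$, which correctly carries $\Gamma(m+1)^3$). Second, and more seriously, the final simplification cannot ``reach the stated'' $\widetilde{\beta}_k^m$, because the closed form displayed in the corollary is itself incorrect --- precisely the kind of slip your proposed $m=1$ sanity check would catch. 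For $m=1$, $k=0$, $x,y\in\mR^5$ one has $Z^{1/2}_2=\tfrac52\bigl(3\langle x,y\rangle^2-|x|^2|y|^2\bigr)$, hence $\Delta_xZ^{1/2}_2=-10|y|^2$ and $\Delta_y\Delta_xZ^{1/2}_2=-100$, so $\widetilde{\beta}_0^1=-100$, whereas the displayed formula evaluates to $-1$ (and at $m=0$ it gives $\tfrac{1}{2k(2k+1)^2}$ instead of the forced value $1$). Carrying your computation through with the corrected $\beta_k^{m,1/2}=(-1)^m4^{2m}(m!)^2\frac{\Gamma(m+1/2)}{\Gamma(1/2)}\frac{\Gamma(k+2m+3/2)}{\Gamma(k+m+3/2)}$ and the duplication formula yields
\[
\widetilde{\beta}_k^m=(-1)^m\,\frac{\Gamma(m+1)\,\Gamma(2m+2)\,\Gamma(k+m+1)\,\Gamma(2k+4m+1)\,(2k+4m+1)^2}{(2k+2m+1)^2\,\Gamma(k+2m+1)\,\Gamma(2k+2m+1)},
\]
which is the paper's expression multiplied by $2(k+2m)(2k+4m+1)^2$ and does return $-100$ at $m=1$, $k=0$ and $1$ at $m=0$. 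So your method is sound and matches the paper's, but the target constant must be corrected before the identity can be asserted as stated.
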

\begin{proof}
The proof is a direct consequence of Theorem~\ref{thmiterated} together with formula~\eqref{zonalGegenbauer}.
\end{proof}

We now pass to all even dimensions.
\begin{corollary}\label{evendim}
In $\mR^{2m+2}$, the zonal harmonic function of degree $k$ can be obtained as
\[
(\Delta_{y}\Delta_{x})^{m}\left[ Z^{0}_{k+2m}(x,y)\right]=\widehat{\beta}_{k}^{m} Z^{m}_{k}(x,y), 
\]
where $Z^{0}_{p}$ denotes the zonal harmonic of degree $p$ in $2$ dimensions and
\begin{equation}\label{alpha}
\widehat{\beta}_{k}^{m}=(-1)^{m}\frac{4^{2 m} (k + 2 m) \Gamma(m+1)^3 \Gamma(k + 2 m+1)}{(k + m) \Gamma(k + m+1)}.
\end{equation}
\end{corollary}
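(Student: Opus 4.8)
The plan is to follow the template of Corollary~\ref{odddim}, with the one modification that the value $\lambda=0$ is not covered by Theorem~\ref{thmiterated} (the constant $\beta_{k}^{m,\lambda}$ degenerates there), so in place of Lemma~\ref{Gegexp} I would use its Chebyshev counterpart established just above. First I would rewrite the quantity to be computed: by~\eqref{chebyshev} the two-dimensional zonal harmonic is $Z^{0}_{p}(x,y)=2T_{p}(w)(|x||y|)^{p}$, so the left-hand side equals $(\Delta_{y}\Delta_{x})^{m}\bigl[2T_{k+2m}(w)(|x||y|)^{k+2m}\bigr]$, where from now on the Laplacians act in $\mR^{2m+2}$. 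I would dispose of the trivial case $m=0$ (both sides equal $Z^{0}_{k}$, and~\eqref{alpha} gives $\widehat{\beta}^{0}_{k}=1$) and assume $m\ge 1$.

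Next I would expand $2T_{k+2m}$ into Gegenbauer polynomials of order $m$ via the Chebyshev analogue of Lemma~\ref{Gegexp}, writing $2T_{k+2m}(t)=\sum_{j=0}^{m}\widehat{\alpha}^{m}_{j}C^{m}_{k+2(m-j)}(t)$ with lowest-order coefficient $\widehat{\alpha}^{m}_{m}=(-1)^{m}\Gamma(m)\Gamma(k+m+1)/\Gamma(k+2m)$. Multiplying by $(|x||y|)^{k+2m}$ and splitting $(|x||y|)^{k+2m}=(|x||y|)^{2j}(|x||y|)^{k+2(m-j)}$, each term of the sum is $\widehat{\alpha}^{m}_{j}(|x||y|)^{2j}$ times $(|x||y|)^{k+2(m-j)}C^{m}_{k+2(m-j)}(w)$, which by~\eqref{zonalGegenbauer} with $\lambda=m$ (so that $\mR^{2m+2}$ is exactly the right dimension) is a scalar multiple of the zonal harmonic $Z^{m}_{k+2(m-j)}$ and hence is harmonic in $\mR^{2m+2}$, separately in $x$ and in $y$. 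Applying $(\Delta_{y}\Delta_{x})^{m}$ and using Lemma~\ref{LapHarm} in each variable, the factor $|x|^{2j}$ (resp.\ $|y|^{2j}$) forces the term to vanish whenever $j<m$; only the term $j=m$ survives, picking up the constant $\bigl(c^{2m+2}_{m,m,k}\bigr)^{2}$. This gives $(\Delta_{y}\Delta_{x})^{m}[Z^{0}_{k+2m}(x,y)]=\widehat{\alpha}^{m}_{m}\bigl(c^{2m+2}_{m,m,k}\bigr)^{2}(|x||y|)^{k}C^{m}_{k}(w)$.

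To finish I would convert the right-hand side back to a zonal harmonic: by~\eqref{zonalGegenbauer}, $(|x||y|)^{k}C^{m}_{k}(w)=\tfrac{m}{k+m}Z^{m}_{k}(x,y)$, so $\widehat{\beta}^{m}_{k}=\tfrac{m}{k+m}\widehat{\alpha}^{m}_{m}\bigl(c^{2m+2}_{m,m,k}\bigr)^{2}$. Substituting $c^{2m+2}_{m,m,k}=4^{m}m!\,\Gamma(k+2m+1)/\Gamma(k+m+1)$ from Lemma~\ref{LapHarm} and the value of $\widehat{\alpha}^{m}_{m}$ above, then simplifying with $m\,\Gamma(m)=\Gamma(m+1)$, $(m!)^{2}=\Gamma(m+1)^{2}$ and $\Gamma(k+2m+1)^{2}/\Gamma(k+2m)=(k+2m)\,\Gamma(k+2m+1)$, produces exactly~\eqref{alpha}. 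I expect the only mildly delicate points to be the $\Gamma$-function bookkeeping in this last step and the verification that $\widehat{\alpha}^{m}_{m}$ equals the product of the $m-1$ rational coefficients coming from~\eqref{G3bis} times the single sign from~\eqref{chebygegen}; neither is a genuine obstacle. As an alternative one could instead derive the corollary as the limit $\lambda\to 0$ of Theorem~\ref{thmiterated} (after multiplying both sides by $(k+2m+\lambda)/\lambda$ and invoking~\eqref{chebyshev}), which avoids the Chebyshev expansion at the cost of a limiting argument in $\lambda$.
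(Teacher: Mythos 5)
Your proposal is correct and follows essentially the same route as the paper, which simply says the proof ``can be performed as that of Theorem~\ref{thmiterated} together with formulas~\eqref{zonalGegenbauer} and~\eqref{chebyshev}'': expand $2T_{k+2m}$ via the Chebyshev analogue of Lemma~\ref{Gegexp}, kill all but the $j=m$ term with Lemma~\ref{LapHarm}, and convert back via~\eqref{zonalGegenbauer}. Your $\Gamma$-function bookkeeping reproduces~\eqref{alpha} exactly, so nothing further is needed.
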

\begin{proof}
The proof can be performed as that of Theorem~\ref{thmiterated} together with formulas~\eqref{zonalGegenbauer} and~\eqref{chebyshev}.
\end{proof}

\begin{remark}\label{simple}
If $y$ is fixed with $|y|=1$ then in Theorem~\ref{thmiterated}, Corollary~\ref{odddim} and Corollary~\ref{evendim} we only need
to consider the action of $\Delta_{x}$. In particular the three coefficients $\beta_{k}^{m,\lambda}$, $\widetilde{\beta}_{k}^{m}$ and $\widehat{\beta}_{k}^{m}$ simplify as follows.
In $\mR^{2(\lambda+m)+2}$ we have
\[
\Delta_{x}^{m}\left[|x|^{k+2m} C^{\lambda}_{k+2m}(w)\right]= (-1)^{m}4^{m}\frac{\Gamma(\lambda+m)}{\Gamma(\lambda)}\Gamma(m+1)  |x| ^k C^{\lambda+m}_{k}(w).
\]
In $\mR^{2m+3}$, the zonal harmonic function of degree $k$ can be obtained as
\[
\Delta_{x}^{m}\left[ Z^{1/2}_{k+2m}(x,y)\right]=(-1)^{m}\frac{2k+4m+1}{2k+2m+1}\Gamma(2m+2) Z^{1/2+m}_{k}(x,y).
\]
In $\mR^{2m+2}$, the zonal harmonic function of degree $k$ can be obtained as
\[
\Delta_{x}^{m}\left[ Z^{0}_{k+2m}(x,y)\right]=(-1)^{m}4^{m}\frac{k+2m}{k+m}(m!)^{2}Z^{m}_{k}(x,y).
\]

%\[
%\beta_{k}^{m, \lambda} = \alpha^{m, \lambda}_m  c^{2(\lambda + m)+2}_{m,m, k} =(-1)^{m}4^{m}\frac{\Gamma(\lambda+m)}{\Gamma(\lambda)}\Gamma(m+1),
%\]
%\[
%\widetilde{\beta}_{k}^{m} = \alpha^{m, 1/2}_m  c^{2m+3}_{m,m, k} \frac{(2m+1)(2k+4m+1)}{2k+2m+1}=(-1)^{m}\frac{2k+4m+1}{2k+2m+1}\Gamma(2m+2),\]
%\[
%\widehat{\beta}_{k}^{m} = \widehat{\alpha}^{m}_m  c^{2m+2}_{m,m, k} \frac{m}{k+m}=(-1)^{m}4^{m}\frac{k+2m}{k+m}(m!)^{2}.
%\]
\end{remark}

\begin{remark}
Notice that similar results to the one proven in this section can be obtained using the operator $\frac{\Delta_{x}}{|y|^{2}}$ instead of $\Delta_{y}\Delta_{x}$.

%\textcolor{red}{do we need this?}
%
%Theorem \ref{thmiterated} can alternatively also be expressed as \textcolor{red}{may we express it as a theorem?}
%\begin{align*}
%\left(\frac{\Delta_x}{\norm{y}^2}\right)^{m-2}&\left(\norm{x}\norm{y}\right)^{2(m-2)+k} C_{2(m-2)+k}^1\left(\frac{\scalar{x,y}}{\norm{x}\norm{y}}\right)\\
%	&=4^{m-2}(m-2)!\frac{(2m+k-3)!}{(m+k-1)!}\frac{(-1)^{m-2}(m-2)!(m+k-1)!}{(2m+k-3)!}Z_{k}^{2m}\\
%	&=(-4)^{m-2}\left((m-2)!\right)^2Z_{k}^{2m}.
%\end{align*}

\end{remark}

\section{Harmonicity of slice regular functions}\label{slicereg}

In the recent paper~\cite{perotticlifford}, A.~Perotti shows some harmonicity properties of slice regular functions, their connections with Clifford analysis  and with zonal harmonics in the particular case of real dimension 4.
Some of these properties were already exploited in~\cite{altavillabisi} to find a possible generalization of the Jensen formula in higher dimension (see also~\cite{perottiJensen}). Other results connecting the theory of slice regularity 
with the one of Fueter were given in~\cite{perottifueter}.
In this section we quickly review slice regularity in $\R_{n}$, the results of~\cite{perotticlifford} and add a
couple of results useful to connect this theory with the one developed in previous sections.
In particular, we will show how the so-called Fueter theorem applies to slice regular functions.

For any slice function it is possible to define its \textit{spherical value} and \textit{spherical derivative}.

\begin{definition}\label{spherical}
Let $f=\mathcal{I}(F):\Omega_{D}\to\R_{n}$ be any slice regular function. For any $x=\alpha+I\beta\in\Omega_{D}$, if $z=\alpha+i\beta\in D$, we define the \textit{spherical value} $f^{\circ}_{s}$
and the \textit{spherical derivative} $f'_{s}$ of $f$ as
\begin{align*}
f^{\circ}_{s}(x)&:=\frac{1}{2}(f(x)+f(x^{c}))=\mathcal{I}(F_{1})(x),\\
f'_{s}(x)&:=(x-x^{c})^{-1}(f(x)-f(x^{c}))=\mathcal{I}\left(\frac{F_{2}}{\Im(z)}\right).
\end{align*}
\end{definition}

The spherical derivative is initially only defined in non-real points. 
However, as it is shown in~\cite{ghiloniperotti}, the definition can be extended to $\R$. 
Both the spherical value and the spherical derivative of a slice regular functions are constant along
spheres of the form $\mathbb{S}_{\alpha+I\beta}:=\{\alpha+J\beta\,|\, J\in\mathbb{S}\}$.
\begin{remark}
Notice that if $f$ is a slice preserving function, then its spherical value is just its real part, i.e.: $f^{\circ}_{s}=(f)_{0}$.
\end{remark}

A fundamental result in the theory of slice regularity is the so-called \textit{Splitting Lemma}. 
As explained in~\cite[Section 2]{ghiloniperottipower}, the real algebra $\R_{n}$ has the \textit{splitting property}, i.e.: for any $J\in \mathbb{S}$, it is possible to find $h=2^{n-1}-1$ elements, $J_{1},\dots, J_{h}\in\mathcal{Q}$, such that $\{1,J,J_{1},JJ_{1},\dots,J_{h},JJ_{h}\}$ is a real vector basis of $\R_{n}$. Such 
a basis is called a \textit{splitting basis}. 

\begin{lemma}[Splitting Lemma]\label{splitting}
Let $J$ be any element of $\mathbb{S}$, $\{1,J,J_{1},JJ_{1},\dots,J_{h},JJ_{h}\}$ be a splitting basis for $\R_{n}$. Then, denoting $1=J_{0}$, the map
$$
\left(\mathcal{S}_{\R}(\Omega_{D})\right)^{2^{n}}\ni (f_{1,0},f_{2,0},\dots,f_{1,h},f_{2,h})\mapsto \sum_{\ell=0}^{h}(f_{1,\ell}J_{\ell}+f_{2,\ell}JJ_{\ell})\in\mathcal{S}(\Omega_{D}),
$$
is bijective. Therefore, for any given $f\in\mathcal{S}(\Omega_{D})$ there exist unique 
$f_{1,0},f_{2,0},\dots,f_{1,h},f_{2,h}\in\mathcal{S}_{\R}(\Omega_{D})$, such that
$$
f=\sum_{\ell=0}^{h}(f_{1,\ell}J_{\ell}+f_{2,\ell}JJ_{\ell}).
$$
\end{lemma}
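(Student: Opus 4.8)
The plan is to transport the entire statement to the level of stem functions, where it reduces to a linear-algebra fact (a real basis of $\R_{n}$ complexifies to a $\C$-basis of $\R_{n}\otimes\C$) together with the uniqueness of the stem function inducing a given slice function, established in~\cite{ghiloniperotti}. Fix $J\in\mathbb{S}$ and the given splitting basis $\mathcal{B}=\{J_{0}=1,J,J_{1},JJ_{1},\dots,J_{h},JJ_{h}\}$ of $\R_{n}$, which has $2^{n}$ elements. Since $\mathcal{B}$ is an $\R$-basis of $\R_{n}$, it is a $\C$-basis of $\R_{n}\otimes\C$; hence every $F\colon D\to\R_{n}\otimes\C$ can be written uniquely as $F=\sum_{\ell=0}^{h}(\Phi_{1,\ell}J_{\ell}+\Phi_{2,\ell}JJ_{\ell})$ with uniquely determined complex-valued coordinate functions $\Phi_{1,\ell},\Phi_{2,\ell}\colon D\to\C$.

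First I would check that $F$ is a holomorphic stem function exactly when each of the coordinate functions $\Phi_{1,\ell},\Phi_{2,\ell}$ is a complex intrinsic holomorphic function (equivalently, a $\C$-valued holomorphic stem function). Holomorphy is immediate: the elements of $\mathcal{B}$ are constants, so, by uniqueness of the coordinate representation, $F$ is holomorphic if and only if every coordinate function is. For the stem condition, the conjugation on $\R_{n}\otimes\C$ fixes $\R_{n}$ and conjugates the $\C$-factor, so $F(\bar z)=\overline{F(z)}$ amounts to $\Phi_{1,\ell}(\bar z)=\overline{\Phi_{1,\ell}(z)}$ and $\Phi_{2,\ell}(\bar z)=\overline{\Phi_{2,\ell}(z)}$ for every $\ell$; writing each coordinate as $a+\imath b$ with $a,b$ real-valued, this says $a$ is even and $b$ is odd in $\Im z$, i.e.\ the coordinate is itself a stem function. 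By the remark following the definition of slice functions, each $\Phi_{1,\ell}$ then induces a slice preserving function $f_{1,\ell}:=\mathcal{I}(\Phi_{1,\ell})\in\mathcal{S}_{\R}(\Omega_{D})$, and likewise for $\Phi_{2,\ell}$.

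Next I would use that the operator $\mathcal{I}$ is additive and commutes with right multiplication by any constant $c\in\R_{n}$ (since $\mathcal{I}(Fc)(\alpha+I\beta)=F_{1}(z)c+IF_{2}(z)c=\mathcal{I}(F)(\alpha+I\beta)\,c$). Applying this to the decomposition of $F$ gives
\[
\mathcal{I}(F)=\sum_{\ell=0}^{h}\big(\mathcal{I}(\Phi_{1,\ell})\,J_{\ell}+\mathcal{I}(\Phi_{2,\ell})\,JJ_{\ell}\big)=\sum_{\ell=0}^{h}\big(f_{1,\ell}\,J_{\ell}+f_{2,\ell}\,JJ_{\ell}\big).
\]
Now take an arbitrary $f\in\mathcal{S}(\Omega_{D})$, let $F$ be the unique stem function inducing it, and decompose as above: this produces functions $f_{1,\ell},f_{2,\ell}\in\mathcal{S}_{\R}(\Omega_{D})$ with $f=\sum_{\ell}(f_{1,\ell}J_{\ell}+f_{2,\ell}JJ_{\ell})$, so the map in the statement is surjective. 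For injectivity, suppose $\sum_{\ell}(f_{1,\ell}J_{\ell}+f_{2,\ell}JJ_{\ell})=0$ with all $f_{1,\ell},f_{2,\ell}\in\mathcal{S}_{\R}(\Omega_{D})$; writing $f_{1,\ell}=\mathcal{I}(\Phi_{1,\ell})$ and $f_{2,\ell}=\mathcal{I}(\Phi_{2,\ell})$ for the corresponding $\C$-valued stem functions, the left-hand side is induced by $\sum_{\ell}(\Phi_{1,\ell}J_{\ell}+\Phi_{2,\ell}JJ_{\ell})$, so by uniqueness of the inducing stem function this sum is the zero function $D\to\R_{n}\otimes\C$; uniqueness of coordinates with respect to the basis $\mathcal{B}$ then forces every $\Phi_{1,\ell}=\Phi_{2,\ell}=0$, hence every $f_{1,\ell}=f_{2,\ell}=0$.

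The only point that needs care is the bookkeeping with the non-commutativity of $\R_{n}$: one has to keep the scalars in each coordinate as the \emph{real} functions $a,b$ — which are central and therefore commute with $I$ and with all the $J_{\ell}$ — so that ``push $\Phi$ through $\mathcal{I}$, then multiply on the right by the constant $J_{\ell}$'' genuinely commutes with the assignment $F_{1}+\imath F_{2}\mapsto F_{1}+IF_{2}$. Granting this, the argument is just the linear algebra recalled above together with the uniqueness of stem functions from~\cite{ghiloniperotti}.
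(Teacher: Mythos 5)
Your proof is correct. The paper itself does not spell out an argument: it only remarks that the proof is the same as that of \cite[Lemma 6.11]{ghilonimorettiperotti} ``suitably changing the indices and the notation,'' and what you have written is precisely the standard argument being alluded to — decompose the stem function $F$ in the complexified basis $\{J_\ell, JJ_\ell\}\otimes\C$, observe that holomorphy and the stem condition are equivalent to each complex coordinate being an intrinsic holomorphic function, and then use additivity of $\mathcal{I}$, its compatibility with right multiplication by constants, and the uniqueness of the inducing stem function to get bijectivity. Your closing remark on keeping the real components $a,b$ central so that right multiplication by $J_\ell$ commutes with $F_1+\imath F_2\mapsto F_1+IF_2$ is exactly the point that needs care, and you have handled it correctly.
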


This result is, in a sense, a rephrasing of the well-known \textit{splitting lemma} \cite{gentilistoppatostruppa}.
A first version in the quaternionic setting was given in \cite[Proposition 3.12]{C-GC-S}
The proof of the previous Lemma is the same as~\cite[proof of Lemma 6.11]{ghilonimorettiperotti}, suitably changing the indices and the notation.

Notice that a slice regular function $f$ is slice preserving, if and only if, in the previous lemma $f=f_{1,0}$.

A direct consequence of the Splitting Lemma is the Fueter theorem for slice regular functions restricted to the space of paravectors in any dimension. Moreover, information on harmonicity can be inferred from that. First of all notice that, given any slice regular function $f$, its restriction to the paravectors
satisfies
$$
f|_{\R^{n+1}}(x_{0}+\underline{x})=f^{\circ}_{s}(x_{0}+\underline{x})+\underline{x}f'_{s}(x_{0}+\underline{x}).
$$
Thanks to~\cite[Corollary 3.3 (a)]{perotticlifford}, for any $n\in\N$ given any slice regular function 
$f\in\mathcal{S}(\Omega)$, its restriction $f|_{\R^{n+1}}:\Omega\cap\R^{n+1}\to \R_{n}$ has the following property
\begin{equation}\label{cauchyspherical}
\bar D f=(1-n)f'_{s}.
\end{equation}

We are now in position to state the announced Fueter theorem for slice regular functions, in the same spirit of~\cite{qian}. The following result was partly known by experts but we did not find any explicit proof in the literature.

%
% \textcolor{red}{This following result was partly known by some expert but we did not find any explicit proof in the literature}

\begin{lemma}\label{fueterslice}
Let $n$ be a positive integer, $\Delta=\Delta_{x,n+1}$ be the $(n+1)$-dimensional Laplacian with respect to the variable $x$ and $f:\Omega\subset\R^{n+1}\to\R_{n}$ be (the restriction of) a 
slice regular function such that $\Delta^{\frac{n-1}{2}}f$ is a well-defined differentiable function. The following facts hold true.
\begin{enumerate}
\item The function $\Delta^{\frac{n-1}{2}}f$ is monogenic, i.e.: $\bar D \Delta^{\frac{n-1}{2}}f=0$.
\item The function $\Delta^{\frac{n-1}{2}}f$ is harmonic, i.e.: $\Delta \Delta^{\frac{n-1}{2}}f=0$.
\item The function $\Delta^{\frac{n-1}{2}}(f)^{\circ}_{s}$ is harmonic, i.e.: $\Delta \Delta^{\frac{n-1}{2}}(f)^{\circ}_{s}=0$.
\item If $n\geq 2$ then the function $\Delta^{\frac{n-3}{2}}(f)'_{s}$ is harmonic, i.e.: $\Delta \Delta^{\frac{n-3}{2}}(f)'_{s}=0$.
\end{enumerate}

\end{lemma}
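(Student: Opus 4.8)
The plan is to reduce everything to the Fueter--Sce--Qian machinery already available for monogenic functions, using the Splitting Lemma to pass from slice regular functions to a finite sum of slice preserving ones, and then invoking the commutation relations~\eqref{commuting} together with the identity~\eqref{cauchyspherical}.

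First I would prove \emph{(1)}. Write $f = \sum_{\ell=0}^{h}(f_{1,\ell}J_{\ell} + f_{2,\ell}JJ_{\ell})$ via the Splitting Lemma~\ref{splitting}, so that each $f_{a,\ell}$ is slice preserving, hence induced by a complex intrinsic holomorphic stem function. For a slice preserving function $g = \mathcal{I}(G)$ with $G = G_1 + \imath G_2$ holomorphic and $\R$-valued components, the restriction to paravectors is $g|_{\R^{n+1}}(x_0+\underline x) = G_1 + \underline x\, G_2/\Im(z)$, and the original Fueter construction applies: $G_1$ and $G_2/\Im(z)$ are axially symmetric functions of $(x_0,|\underline x|)$ satisfying the Vekua-type system coming from holomorphicity of $G$. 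The standard computation (this is exactly the content of the classical Fueter--Sce theorem in the axial setting, see~\cite{qian,qiansommen}) shows $\Delta^{\frac{n-1}{2}}$ applied to such an axial function is monogenic. Since $\bar D$ is $\R$-linear and commutes with right multiplication by the constants $J_\ell$, $JJ_\ell$, and since by~\eqref{commuting} $\Delta^{\frac{n-1}{2}}$ commutes with $\bar D$, we get $\bar D\,\Delta^{\frac{n-1}{2}}f = \sum_{\ell}\big(\bar D\,\Delta^{\frac{n-1}{2}}(f_{1,\ell})\big)J_\ell + \big(\bar D\,\Delta^{\frac{n-1}{2}}(f_{2,\ell})\big)JJ_\ell = 0$. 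Alternatively, and more in line with the rest of the paper, one can avoid the Vekua computation: by~\eqref{cauchyspherical}, $\bar D f = (1-n)f'_s$, and then $\bar D\,\Delta^{\frac{n-1}{2}}f = \Delta^{\frac{n-1}{2}}\bar D f = (1-n)\Delta^{\frac{n-1}{2}}f'_s$; one then shows this vanishes, which dovetails with part \emph{(4)} below — I would in fact prove \emph{(4)} first and then read off \emph{(1)}.

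For \emph{(2)}: once $\Delta^{\frac{n-1}{2}}f$ is monogenic, apply $\bar D$ again and use $D\bar D = -\Delta$. Concretely, a monogenic function $g$ satisfies $\bar D g = 0$; but monogenicity in the usual sense is $Dg=0$ — so I need to be careful about which operator kills $\Delta^{\frac{n-1}{2}}f$. Granting \emph{(1)} as stated ($\bar D\,\Delta^{\frac{n-1}{2}}f = 0$), we then want $D(\bar D\,\Delta^{\frac{n-1}{2}}f)=0$ trivially, but that gives $-\Delta\,\Delta^{\frac{n-1}{2}}f=0$ only after noting $D\bar D = \bar D D = -\Delta$ as scalar operators. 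So \emph{(2)} is immediate: $\Delta\,\Delta^{\frac{n-1}{2}}f = -\,D\bar D\,\Delta^{\frac{n-1}{2}}f = -D(0) = 0$. Statement \emph{(2)} also follows directly from $\Delta\,\Delta^{\frac{n-1}{2}} = \Delta^{\frac{n+1}{2}}$ and the fact that $\Delta^{\frac{n+1}{2}}$ annihilates slice regular functions (again a Fueter-type vanishing), but routing it through \emph{(1)} is cleanest.

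For \emph{(3)} and \emph{(4)}: here I would use $f^{\circ}_s = \mathcal{I}(F_1)$ and $f'_s = \mathcal{I}(F_2/\Im(z))$. The point is that the restriction decomposition $f|_{\R^{n+1}} = f^{\circ}_s + \underline x\, f'_s$ together with~\eqref{cauchyspherical} lets me isolate the scalar and vector parts. Apply $\bar D$ to $f|_{\R^{n+1}} = f^{\circ}_s + \underline x f'_s$ and compare with $\bar D f = (1-n)f'_s$: since $\bar D(\underline x\, g) = \partial_{x_0}(\underline x g) + \partial_{\underline x}(\underline x g)$ and $\partial_{\underline x}\underline x = -n$ (as $\sum_k e_k^2 = -n$ on the $x_k$-derivatives of $\underline x$), one extracts $\partial_{x_0}f^{\circ}_s = -\,\langle\nabla_{\underline x}, \text{(vector part)}\rangle$-type relations plus $\partial_{x_0}f'_s + \text{(something)} = 0$, i.e.\ a first-order elliptic system coupling $f^{\circ}_s$ and $f'_s$. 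Spelling this out: $f^{\circ}_s$ and $f'_s$ are themselves (restrictions of) slice preserving slice functions, so each is an \emph{axially symmetric} $\R_n$-valued function, and the holomorphicity of the stem function $F$ says precisely that $(F_1, F_2)$ solve the Cauchy--Riemann-type equations; translating to $(x_0, r)$ with $r=|\underline x|$ gives that $f^{\circ}_s$ satisfies $\partial_{x_0}^2 u + \partial_r^2 u + \frac{c}{r}\partial_r u = 0$ for an appropriate constant, which is exactly $\Delta^{\frac{n-1}{2}}$-reducible to harmonicity. The cleanest route: by~\cite[Corollary 3.3 and surrounding results]{perotticlifford} (or direct computation from the stem-function equations), $\bar D f^{\circ}_s = -\,\partial_{\underline x}\!\cdot$(axial stuff) and one gets $\Delta f^{\circ}_s = $ (up to lower-order) an expression in $f'_s$ and its derivatives; iterating with~\eqref{commuting} and the semigroup property $\Delta^\alpha\Delta^\beta = \Delta^{\alpha+\beta}$ collapses $\Delta\,\Delta^{\frac{n-1}{2}}f^{\circ}_s$ and $\Delta\,\Delta^{\frac{n-3}{2}}f'_s$ to zero.

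The main obstacle I anticipate is \emph{(4)}, specifically making rigorous the claim that $\Delta^{\frac{n-3}{2}}f'_s$ is harmonic when $n$ is even (so the exponent $\frac{n-3}{2}$ is a genuine fractional Laplacian) and controlling the "well-definedness" caveat — the fractional Laplacian is defined via the Fourier-transform formula~\eqref{fourierpol}, which only applies cleanly to functions with suitable homogeneity/decay, so one must either restrict to the homogeneous (polynomial-type) case where~\eqref{fourierpol} and the degree-shift $k \mapsto k-2\mu$ apply verbatim, or argue by density/analytic continuation in $n$. I would handle this by first doing the odd-$n$ case, where all Laplacian powers are integers and everything is a genuine differential-operator identity provable from~\eqref{cauchyspherical}, $D\bar D = -\Delta$, and the Splitting Lemma; then extend to even $n$ using the homogeneity bookkeeping of~\eqref{fourierpol}--\eqref{gamma} exactly as in Qian's proof of Theorem~\ref{fueterqian}, which is the template being followed. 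The coupling between $f^{\circ}_s$ and $f'_s$ via~\eqref{cauchyspherical} is what makes \emph{(3)} and \emph{(4)} two sides of the same computation, so once the system relating their Laplacians is written down, both drop out together.
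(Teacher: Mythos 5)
Your treatment of parts \emph{(1)}, \emph{(2)} and \emph{(4)} is essentially the paper's argument: \emph{(1)} is obtained by splitting $f$ into slice preserving components via Lemma~\ref{splitting} and applying the classical Fueter--Sce--Qian theorem to each of them (the paper does exactly this, expanding $f$ in a Laurent series and splitting the coefficients); \emph{(2)} follows from $D\bar D=-\Delta$; and \emph{(4)} is the chain $0=\bar D\Delta^{\frac{n-1}{2}}f=\Delta^{\frac{n-1}{2}}\bar Df=(1-n)\Delta^{\frac{n-1}{2}}f'_{s}$ using \eqref{commuting} and \eqref{cauchyspherical}, which you do write down. One caveat: your suggestion to ``prove \emph{(4)} first and read off \emph{(1)}'' is circular as stated, since the only route you give to the vanishing of $\Delta^{\frac{n-1}{2}}f'_{s}$ is precisely $\bar D\Delta^{\frac{n-1}{2}}f=0$; keep the order \emph{(1)} then \emph{(4)}. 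Also, the worry about well-definedness of the fractional power for even $n$ is already absorbed into the hypothesis of the lemma, so the odd-$n$-first/analytic-continuation detour is unnecessary.

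The genuine gap is in part \emph{(3)}. From $\Delta^{\frac{n+1}{2}}f=0$ and the decomposition $f|_{\R^{n+1}}=f^{\circ}_{s}+\underline{x}f'_{s}$ you want to conclude $\Delta^{\frac{n+1}{2}}f^{\circ}_{s}=0$, but for that you must show $\Delta^{\frac{n+1}{2}}(\underline{x}f'_{s})=0$, and the Laplacian powers do \emph{not} commute with multiplication by $\underline{x}$ --- formula~\eqref{commuting} only covers $\partial_{\underline{x}}$ and $\bar D$, and your plan never addresses this. The paper's key ingredient here is the commutator identity $[\Delta^{k},\underline{x}]=2k\,\partial_{\underline{x}}\Delta^{k-1}$ (proved by induction), which for $n=2m$ turns $\Delta^{\frac{1}{2}}\Delta^{m}(\underline{x}f'_{s})$ into $\Delta^{\frac{1}{2}}\bigl[\underline{x}\,\Delta^{m}f'_{s}+2m\,\partial_{\underline{x}}\Delta^{m-1}f'_{s}\bigr]$, both pieces being killed by part \emph{(4)} together with \eqref{commuting}. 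Your alternative sketch --- ``$\Delta f^{\circ}_{s}$ equals, up to lower order, an expression in $f'_{s}$ and its derivatives'' --- would work only if you pin down the \emph{exact} identity $\Delta f^{\circ}_{s}=(1-n)\partial_{x_{0}}f'_{s}$ with no correction terms (this is \cite[Theorem 4.6 (c)]{perotticlifford}, quoted in the paper right after the lemma), from which \emph{(3)} is immediate by commuting $\Delta^{\frac{n-1}{2}}$ past $\partial_{x_{0}}$ and invoking \emph{(4)}. As written, with unspecified ``lower-order'' terms that could a priori involve $f^{\circ}_{s}$ itself, the argument does not close.
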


\begin{proof}
For $n$ odd the result is true thanks to~\cite[Corollary 4.4]{perotticlifford}. We prove the result for  $n$ even.

Given any slice regular function $f$, it is possible~\cite{ghiloniperottistoppato} to expand 
its restriction on the space of paravectors in Laurent series of the form $f(x)=\sum_{k\in\mathbb{Z}}x^{k}a_{k}$, where, for $k>0$, we have
$$x^{-k}=\left(\frac{x^{c}}{xx^{c}}\right)^{k}=\frac{(x^{c})^{k}}{|x|^{2k}},$$
and hence $x^{-k}$ is a slice function regular away from zero. 
 Moreover, thanks to the Splitting Lemma~\ref{splitting}, the restriction of $f$ to the space of
 paravectors is a finite sum, with constant coefficient, of slice preserving regular functions $f_{h,\ell}$, for $h=1,2$ and $\ell=0,\dots, 2^{n-1}-1$, 
 i.e.: slice regular functions induced by complex intrinsic holomorphic functions. This splitting can be 
 performed by splitting each $a_{k}$ in its $2^{n}$ real components.
 To any of these functions $f_{h,\ell}$ the Fueter theorem holds true.
 Therefore, applying the Fueter Theorem in the formulation of~\cite[Section 4]{kou} to all the slice preserving regular functions
 $f_{h,\ell}$  that give the splitting of $f$, we have that $\Delta^{(n-1)/2}f$ is monogenic, proving \textit{(1)}. Moreover, since $\Delta=D\bar D$, then, clearly
$\Delta^{(n+1)/2}f=0$, proving \textit{(2)}.

Thanks to the commutation properties in formula~\eqref{commuting}, we have, using equation~\eqref{cauchyspherical}
\begin{equation}\label{(4)}
0=\bar D \Delta^{(n-1)/2}f=\Delta^{(n-1)/2} \bar D f=\Delta^{(n-1)/2} ((1-n)f'_{s}),
\end{equation}
proving \textit{(4)}.

We now pass to the spherical value. We know that
\begin{align*}
0&=\Delta \Delta^{(n-1)/2}f(x)=\Delta^{(n-1)/2}\Delta f(x)=\Delta^{(n-1)/2}\Delta (f^{\circ}_{s}(x)+\underline{x}f'_{s}(x))\\
&= \Delta^{(n-1)/2}\Delta(f^{\circ}_{s})+\Delta^{(n-1)/2}\Delta(\underline{x}f'_{s}(x)).
\end{align*}
Assume that $n=2m$. We now want to prove that $\Delta^{(n-1)/2}\Delta(\underline{x}f'_{s}(x))=\Delta^{1/2}\Delta^{m}(\underline{x}f'_{s}(x))=0$. It is possible to prove by induction that, for any $k\in\mN$
$$
\left[\Delta^{k},\underline{x}\right]=2k\partial_{\underline{x}}\Delta^{k-1}.
$$
Thus we have
$$
\Delta^{1/2}\Delta^{m}(\underline{x}f'_{s}(x))=\Delta^{1/2}\left[\underline{x}\Delta^{m}f'_{s}(x)+2m\partial_{\underline{x}}\Delta^{m-1}f'_{s}(x)\right].
$$
Now, thanks to formula~\eqref{(4)}, we have that $0=\Delta^{(n-1)/2} f'_{s}= \Delta^{m-\frac{1}{2}}f'_{s}$ and so $\Delta^{m}f'_{s}=0$.
Moreover, thanks to formula~\eqref{commuting}, we have proved that
$$
\Delta \Delta^{\frac{n-1}{2}}(f)^{\circ}_{s}=0.
$$
%
%
%Now, by direct computation we have that
%$$
%\Delta(\underline{x}f'_{s}(x))=2\partial_{\underline{x}}f'_{s}(x)+\underline{x}\Delta f'_{s}(x).
%$$
%Therefore, thanks to the commuting properties in formula~\eqref{commuting}, we have that
%\textcolor{red}{see if it is possible to shrink the following equalities with an ``iterating'' }
%\begin{align*}
%\Delta^{(n-1)/2}\Delta(\underline{x}f'_{s}(x)) &= \Delta^{(n-1)/2}(2\partial_{\underline{x}}f'_{s}(x)+\underline{x}\Delta f'_{s}(x))\\
%&= 2\partial_{\underline{x}}\Delta^{(n-1)/2}f'_{s}(x)+ \Delta^{(n-1)/2}(\underline{x}\Delta f'_{s}(x))= \Delta^{(n-1)/2}(\underline{x}\Delta f'_{s}(x))\\
%&= \Delta^{(n-3)/2}(2\partial_{\underline{x}}\Delta f'_{s}(x)+\underline{x}\Delta^{2} f'_{s}(x))\\
%&=2\partial_{\underline{x}}\Delta^{(n-1)/2}f'_{s}(x)+ \Delta^{(n-3)/2}(\underline{x}\Delta^{2} f'_{s}(x))= \Delta^{(n-3)/2}(\underline{x}\Delta^{2} f'_{s})\\
%& =\ldots\\
%&= \Delta^{3/2}(2\partial_{\underline{x}}\Delta^{(n-4)/2}f'_{s}(x)+\underline{x}\Delta^{n-2/2}f'_{s}(x))\\
%&= 2\partial_{\underline{x}}\Delta^{(n-1)/2}f'_{s}(x)+\Delta^{3/2}(\underline{x}\Delta^{n-2/2}f'_{s}(x))=\Delta^{3/2}(\underline{x}\Delta^{n-2/2}f'_{s}(x))\\
%&= \Delta^{1/2}(2\partial_{\underline{x}}\Delta^{(n-2)/2}f'_{s}(x)+\underline{x}\Delta^{n/2}f'_{s}(x))\\
%&=  2\partial_{\underline{x}}\Delta^{(n-1)/2}f'_{s}(x)+\Delta^{1/2}(\underline{x}\Delta^{1/2}\Delta^{(n-1)/2}f'_{s}(x))=0.
%\end{align*}
%
%
%
%
%Collecting everything we have proved that
%$$
%\Delta \Delta^{\frac{n-1}{2}}(f)^{\circ}_{s}=0.
%$$
\end{proof}

%\begin{remark}
%As usual, denote by $\Delta$ the Laplacian in $\mR^{n+1}$ with respect to the $x$ variable.
%Clearly, the previous result can be improved by using Clifford and Kelvin inversions, i.e.: 
%under the same hypotheses of Lemma~\ref{fueterslice} we have that
%$\mathfrak{K}[\Delta^{\frac{n-1}{2}}f]$ is monogenic and $\mathcal{K}[\Delta^{\frac{n-1}{2}}f]$, $\mathcal{K}[\Delta^{\frac{n-1}{2}}(f)^{\circ}_{s}]$ and if ($n\geq 2$) $\mathcal{K}[\Delta^{\frac{n-3}{2}}(f)'_{s}]$ are harmonic.
%\end{remark}

We recall from~\cite[Theorem 4.6 (c)]{perotticlifford} that, for any $n\in\N$, the restriction of a slice
regular function to the space of paravectors $f:\Omega\cap\R^{n+1}\to \R_{n}$ satisfies
$$
\Delta_{x,n+1}f^{\circ}_{s}(x)=(1-n)\frac{\partial f'_{s}}{\partial x_{0}}(x).
$$
Now, since $(x-x^{c})$ is purely imaginary, then
$$
\frac{\partial}{\partial x_{0}}\left((x-x^{c})^{-1}(f(x)-f(x^{c}))\right)=(x-x^{c})^{-1}\frac{\partial}{\partial x_{0}}(f(x)-f(x^{c}))=\left(\frac{\partial f}{\partial x_{0}}\right)'_{s}=(\partial_{c}f)'_{s},
$$
where $\partial_{c}$ is the so-called \textit{slice derivative} (also denoted by $2\vartheta$ in~\cite{perotticlifford}) and
hence  \begin{equation}\label{derivative_value}
\Delta_{x.n+1}f^{\circ}_{s}(x)=(1-n)(\partial_{c}f)'_{s},
\end{equation}
confirming that, when $n=1$, the real part of a holomorphic function is harmonic.
Notice that, as usual, $\partial_{c}x^{k}=kx^{k-1}$. 

Given any positive integer $k$, the $k$-th power of the Clifford paravector is the restriction
of the slice regular function $x^{k}$. Moreover the spherical value $(x^{k})^{\circ}_{s}$ and the spherical derivative $(x^{k})'_{s}$ (restricted to $\R^{n+1}$), are homogeneous polynomials of the 
paravector variable $x_{0}+\underline{x}$ and are constant along \textit{parallels with
respect to the real line}, i.e.: spheres of the form $\mathbb{S}_{x}$. Starting from these observations,
in~\cite[Section 5]{perotticlifford}, Perotti shows, thanks to the harmonic properties of slice regular 
functions, that when $n=3$, and $k$ is any positive integer, the following facts hold true.
\begin{enumerate}
\item It holds: 
$$k(x^{k})'_{s}=Z_{k-1}^{1}(x,1).$$
\item For any $y\in\mR^{4}$, such that $|y|=1$ (and hence $y^{-1}=y^{c}$), it holds
$$Z_{k-1}^{1}(x,y)=Z_{k-1}^{1}(xy^{-1},1)=Z_{k-1}^{1}(xy^{c},1)=\frac{[xy^{c}-yx^{c}]^{-1}[(xy^{c})^{k}-(yx^{c})^{k}]}{k},$$ 
where the second equality is explicitly computed in Lemma~\ref{zydeco}.
\item It holds:
$$x^{k}=\frac{Z_{k}^{1}(x,1)}{k+1}+\frac{x-2x_{0}}{k}Z_{k-1}^{1}(x,1).$$ 
This can be seen as a particular case of Lemma~\ref{lemmaxyc}, recalling that $|x|^{2}=xx^{c}$ and that $-x^{c}=x-2x_{0}$.
%\item  $(x^{-m})'_{s}=\mathcal{K}[(x^{m})'_{s}]$; [ needed?]
%\item $((1-x)^{-2}x)'_{s}=\frac{1-|x|^{2}}{|x-1|^{4}}$ [ the Poisson kernel] \textcolor{red}{add formula with spherical value to compare with the other obtained for the poisson kernel}.
\end{enumerate}

Since, for any $k$, the function $x^{k}$ is a slice preserving function, then the spherical value of $x^{k}$
is exactly its real part, i.e.:
$$
(x^{k})^{\circ}_{s}=(x^{k})_{0}.
$$
Moreover, since $x^{k}$ is slice preserving, thanks to formula~\eqref{derivative_value}, we have that
\begin{equation}\label{zonalvalue}
\Delta_{4}(x^{k+2})_{0}=\Delta_{4}(x^{k+2})^{\circ}_{s}=-2(\partial_{c}x^{k+2})'_{s}=-2(k+2)(x^{k})'_{s}=-2\frac{k+2}{k+1}Z_{k}^{1}(x,1).
\end{equation}

More in general, thanks to equation~\eqref{2dim1} and Corollary~\ref{evendim} we can now state the following general result
\begin{theorem}\label{zonalscalar}
Let $n+1$ be an even number and $n=2m+1$. If $x=x_{0}+\underline{x}$ and $y=y_{0}+\underline {y}$ belong to $\mathbb{R}^{n+1}$, then
\begin{equation}\label{zonalscalar1}
(\Delta_{y,n+1}\Delta_{x,n+1})^{m}\left[((xy^{c})^{k+2m})_{0}\right]=\frac{1}{2}\widetilde{\beta}_{k}^{m}Z^{m}_{k}(x,y),
\end{equation}
where $\widehat{\beta}_{k}^{m}$ is given in formula~\eqref{alpha}.
\end{theorem}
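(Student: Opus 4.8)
The plan is to reduce the identity to a purely scalar statement about the real part of powers of $xy^{c}$ and then to invoke Corollary~\ref{evendim}. The key algebraic fact is that for any two paravectors $x,y\in\R^{n+1}$ the element $z:=xy^{c}$ satisfies
\[
z+z^{c}=xy^{c}+yx^{c}=2\langle x,y\rangle,\qquad zz^{c}=x\,|y|^{2}\,x^{c}=|x|^{2}|y|^{2},
\]
so that $z^{c}=2\langle x,y\rangle-z$ commutes with $z$ and $z$ is a root of $t^{2}-2\langle x,y\rangle\,t+|x|^{2}|y|^{2}=0$. Setting $u:=z-\langle x,y\rangle$ one gets $u^{c}=-u$ and $u^{2}=\langle x,y\rangle^{2}-|x|^{2}|y|^{2}\le 0$ by Cauchy--Schwarz; hence, when $x,y$ are not parallel, $I:=u/\sqrt{|x|^{2}|y|^{2}-\langle x,y\rangle^{2}}$ lies in $\mathbb{S}$ and $z=|x||y|(\cos\vartheta+I\sin\vartheta)$ with $\cos\vartheta=w$. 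In particular $xy^{c}$, although in general not a paravector, always lies in $\mathcal{Q}$. De Moivre on the slice $\C_{I}$ then gives $(z^{k})_{0}=(|x||y|)^{k}\cos(k\vartheta)=(|x||y|)^{k}T_{k}(w)$, which by~\eqref{chebyshev} equals $\tfrac12 Z^{0}_{k}(x,y)$; the parallel case is the trivial limit $w=\pm1$. This is precisely~\eqref{2dim1}, now valid in every dimension, since only the quadratic relation above is used rather than the identification $\R_{2}=\C$.

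With this in hand, replace $k$ by $k+2m$: the function $((xy^{c})^{k+2m})_{0}=\tfrac12 Z^{0}_{k+2m}(x,y)$ is a homogeneous polynomial of degree $k+2m$ in $x$ and in $y$ on $\R^{n+1}=\R^{2m+2}$. Since $n=2m+1$ we have $\lambda=\tfrac{n-1}{2}=m$, so $\R^{n+1}$ is exactly the dimension appearing in Corollary~\ref{evendim}. Applying $(\Delta_{y,n+1}\Delta_{x,n+1})^{m}$ and using that corollary,
\[
(\Delta_{y}\Delta_{x})^{m}\big[((xy^{c})^{k+2m})_{0}\big]=\tfrac12(\Delta_{y}\Delta_{x})^{m}\big[Z^{0}_{k+2m}(x,y)\big]=\tfrac12\,\widehat{\beta}_{k}^{m}\,Z^{m}_{k}(x,y),
\]
with $\widehat{\beta}_{k}^{m}$ the constant of~\eqref{alpha}; this is~\eqref{zonalscalar1} (the $\widetilde{\beta}$ written there is to be read as $\widehat{\beta}$). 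As a consistency check, taking $m=1$ and $|y|=1$ recovers~\eqref{zonalvalue} through the simplified coefficient of Remark~\ref{simple}.

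There is no serious obstacle here: the argument is a two-step reduction, and the substantive work was already carried out in Section~\ref{iterated}. The two points that need care are (i) establishing the scalar identity $((xy^{c})^{k})_{0}=(|x||y|)^{k}T_{k}(w)$ in arbitrary dimension, i.e.\ that $xy^{c}$ behaves like the complex number $|x||y|e^{i\vartheta}$ on a suitable slice, which is the higher-dimensional form of~\eqref{2dim1}; and (ii) the dimensional bookkeeping, in particular remembering that $Z^{0}_{k+2m}$ is \emph{not} harmonic on $\R^{2m+2}$ when $m\ge 1$, so one genuinely applies the Euclidean Laplacians of $\R^{2m+2}$ and relies on Corollary~\ref{evendim} (whose proof already encodes this non-triviality) rather than on any harmonicity shortcut.
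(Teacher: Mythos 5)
Your proof is correct and follows the same route the paper takes: identify $((xy^{c})^{k+2m})_{0}$ with $\tfrac12 Z^{0}_{k+2m}(x,y)$ and then apply Corollary~\ref{evendim}. The one place where you go beyond the paper is worth noting: the paper justifies the first step by citing equation~\eqref{2dim1}, which it only proves for $x,y\in\R^{2}=\C$ via polar coordinates, and the validity of $((xy^{c})^{k})_{0}=T_{k}(w)(|x||y|)^{k}$ for paravectors in arbitrary dimension is only confirmed indirectly by the long binomial/hypergeometric computation of Appendix~\ref{4D}. Your argument --- observing that $z=xy^{c}$ satisfies $z+z^{c}=2\langle x,y\rangle$ and $zz^{c}=|x|^{2}|y|^{2}$, hence lies on a slice $\C_{I}$ with $I\in\mathbb{S}$ and equals $|x||y|(\cos\vartheta+I\sin\vartheta)$, so that De Moivre gives the Chebyshev identity directly --- is a clean, self-contained way to close that gap, and your bookkeeping (including the observation that the $\widetilde{\beta}^{m}_{k}$ in the statement should read $\widehat{\beta}^{m}_{k}$ from~\eqref{alpha}) is accurate.
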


\begin{remark}
Notice that when $y=1$, formula~\eqref{zonalscalar1} simplifies as explained in Remark~\ref{simple} as
$$
\Delta_{x,n+1}^{m}\left[(x^{k+2m})_{0}\right]=\frac{1}{2}\left((-1)^{m}4^{m}\frac{k+2m}{k+m}(m!)^{2}\right)Z^{m}_{k}(x,1),
$$
which, for $m=1$ is exactly what we pointed out in formula~\eqref{zonalvalue}.
\end{remark}

\section{A further representation of zonal harmonics}\label{further}

%+++++++++++++++++++++++
In this last section we are going to give a further formula representing zonal harmonics. In particular we are going to
generalize formula~\eqref{2dim2} to any dimension by using fractional powers of the usual Laplacian and the main result of Section~\ref{ladder}.
In the whole present section we consider slice regular functions restricted to the set of paravectors.
%
%\textcolor{red}{I don't remember the meaning of this comment: Everything should be restricted to the space of paravectors.}
%
%%Here I wrote everything to obtain just $C^{\lambda}_{m}(\frac{x_{0}}{|x|})|x|^{m}$.
%%We should think if it is better to write everything for C(x,y)
%%
%%+++++++++++++++++++++++
As usual, let $\Delta$ denote the Laplacian in $\mR^{n+1}$ with respect to the variable $x$.
Since $x^{-k}=\left(\frac{x^{c}}{|x|^{2}}\right)^{k}$, if we write $x=x_{0}+I\beta\in\R^{n+1}$, then $((x^{c})^{k})_{0}$ is 
a finite sum of monomials of the form $x_{0}^{\ell}\beta^{h}$. Therefore, since the distributional Fourier transform is additive 
and it splits on functions of separable variables, we get that $\mathcal{F}(x^{-k})$ is a homogeneous polynomial
that is constant along spheres of the form $\mathbb{S}_{x}$.
Since $x^{-k}$ is homogeneous of degree $-k$, then the same is true for $(x^{-k})_{0}$.
From formula~\eqref{kelvinhomo} we know that, since $\Delta^{\frac{n-1}{2}}(x^{-k})_{0}$ is a homogenous
function of degree $1-n-k$, then $\mathcal{K}[\Delta^{\frac{n-1}{2}}(x^{-k})_{0}]$ is homogeneous of degree
$k$.
Therefore, thanks to the properties of zonal harmonics, for any integer $n\geq 0$ we have that $\mathcal{K}[\Delta^{\frac{n-1}{2}}(x^{-k})_{0}]$ equals, up to a multiplicative coefficient, the function $Z^{\frac{n-1}{2}}_{k}(x,1)$.
The aim of this section is to give an explicit expression of such coefficient in the case in which $1$ is replaced by
any paravector $y$ such that $|y|=1$. Then, we compare this last representation
with the one given in Section~\ref{slicereg}.

We start by adapting previous general considerations about the Fourier transform to our particular case.
Using formula~\eqref{gamma}, we have that

\begin{align*}
\frac{\gamma_{1,n}}{\gamma_{1,2}} &=\frac{i\Gamma\left(\frac{n+1}{2}\right)\big/\Gamma\left(\frac{1+n+1-n}{2}\right)}{i\Gamma\left(\frac{3}{2}\right)\big/\Gamma\left(\frac{1+n+1-2}{2}\right)}\\
&=\frac{\Gamma\left(\frac{n+1}{2}\right)\big/\Gamma\left(\frac{n}{2}\right)}{\Gamma\left(1\right)\big/\Gamma\left(\frac{3}{2}\right)}=2^{2-n}\Gamma(n)=2^{2-n}(n-1)!
\end{align*}

We start our computations with $y^{c}$ instead of $y^{-1}$. In this case we have, after some simple computation
%$$
%(x^{-m})_{0}=\frac{(-1)^{m-1}}{(m-1)!}\frac{\partial^{m-1}}{\partial x_{0}^{m-1}}\left(\frac{x_{0}}{|x|^{2}}\right),\quad 
%((xy^c)^{-m})_{0}=\frac{1}{|y|^{2(m-1)}}\frac{(-1)^{m-1}}{(m-1)!}\langle y,\nabla_x \rangle^{m-1}\left(\frac{\langle x, y\rangle}{|x|^{2}|y|^2}\right),
%$$
$$
((xy^c)^{-k})_{0}=\frac{1}{|y|^{2(k-1)}}\frac{(-1)^{k-1}}{(k-1)!}\langle y,\nabla_x \rangle^{k-1}\left(\frac{\langle x, y\rangle}{|x|^{2}|y|^2}\right),
$$
where $\langle y, \nabla_x\rangle$ denotes the directional derivative with respect to $y$ and, for $n>1$, that
%$$
%\frac{x_{0}}{|x|^{n+1}}=\frac{1}{1-n}\frac{\partial}{\partial x_{0}}\mathcal{K}[1],\quad \frac{\langle x,y\rangle}{|x|^{n+1}}=\frac{1}{1-n}\langle y,\nabla_x\rangle\mathcal{K}[1].
%$$
$$
\frac{\langle x,y\rangle}{|x|^{n+1}}=\frac{1}{1-n}\langle y,\nabla_x\rangle\mathcal{K}[1].
$$
Therefore, by using formula~\eqref{fourierpol}, and following the idea in~\cite{qian}, we have the next sequence of equalities:
%\begin{align*}
%\Delta^{\frac{n-1}{2}}(x^{-m})_{0} &= \frac{(-1)^{m-1}}{(m-1)!}\frac{\partial^{m-1}}{\partial x_{0}^{m-1}}\mathcal{F}^{-1}\left((i|\xi|)^{n-1}\mathcal{F}\left(\left(\frac{x_{0}}{|x|^{2}}\right)\right)(\xi)\right)\\
%&=\frac{(-1)^{m-1}}{(m-1)!}\frac{\partial^{m-1}}{\partial x_{0}^{m-1}}i^{n-1}\mathcal{F}^{-1}\left(|\xi|^{n-1}\gamma_{1,n}\left(\frac{\xi_{0}}{|\xi|^{n+1}}\right)\right)\\
%&=\frac{(-1)^{m-1}}{(m-1)!}i^{n-1}\frac{\gamma_{1,n}}{\gamma_{1,2}}\frac{\partial^{m-1}}{\partial x_{0}^{m-1}}\left(\frac{x_{0}}{|x|^{n+1}}\right)\\
%&=\frac{i^{n-1}}{n-1}\frac{(-1)^{m}}{(m-1)!}\frac{\gamma_{1,n}}{\gamma_{1,2}}\frac{\partial^{m}}{\partial x_{0}^{m}}\mathcal{K}[1]\\
%&=\frac{i^{n-1}}{n-1}\frac{(-1)^{m}}{(m-1)!}2^{2-n}(n-1)!\frac{\partial^{m}}{\partial x_{0}^{m}}\mathcal{K}[1]\\
%&=(n-2)!i^{n-1}\frac{(-1)^{m}}{(m-1)!}\frac{\partial^{m}}{\partial x_{0}^{m}}\mathcal{K}[1].
%\end{align*}

\begin{align*}
\Delta^{\frac{n-1}{2}}((xy^c)^{-k})_{0} &= 
\frac{1}{|y|^{2(k-1)}}\frac{(-1)^{k-1}}{(k-1)!}\langle y,\nabla_x \rangle^{k-1}\mathcal{F}^{-1}\left((i|\xi|)^{n-1}\mathcal{F}\left(\left(\frac{\langle x, y\rangle}{|x|^{2}|y|^2}\right)\right)(\xi)\right)\\
&=\frac{1}{|y|^{2k}}\frac{(-1)^{k-1}}{(k-1)!}i^{n-1}\langle y,\nabla_x \rangle^{k-1}\mathcal{F}^{-1}\left(|\xi|^{n-1}\gamma_{1,n}\frac{\langle \xi, y\rangle}{|\xi|^{n+1}}
\right)\\
&=\frac{1}{|y|^{2k}}\frac{(-1)^{k-1}}{(k-1)!}i^{n-1}\frac{\gamma_{1,n}}{\gamma_{1,2}}\langle y,\nabla_x \rangle^{k-1}\left(\frac{\langle x, y\rangle}{|x|^{n+1}}\right)\\%%
&=\frac{1}{|y|^{2k}}\frac{i^{n-1}}{n-1}\frac{(-1)^{k}}{(k-1)!}\frac{\gamma_{1,n}}{\gamma_{1,2}}\langle y,\nabla_x \rangle^{k}\mathcal{K}[1]\\
&=\frac{1}{|y|^{2k}}\frac{i^{n-1}}{n-1}\frac{(-1)^{k}}{(k-1)!}2^{2-n}(n-1)!\langle y,\nabla_x \rangle^{k}\mathcal{K}[1]\\
&=\frac{1}{|y|^{2k}}(n-2)!i^{n-1}\frac{(-1)^{k}}{(k-1)!}\langle y,\nabla_x \rangle^{k}\mathcal{K}[1].
\end{align*}

Therefore
%$$
%\mathcal{K}[\Delta^{\frac{n-1}{2}}(x^{-m})_{0}]=(n-2)!i^{n-1}\frac{(-1)^{m}}{(m-1)!}\mathcal{K}\frac{\partial^{m}}{\partial x_{0}^{m}}\mathcal{K}[1]=(n-2)!i^{n-1}\frac{(-1)^{m}}{(m-1)!}\left(\mathcal{K}\frac{\partial}{\partial x_{0}}\mathcal{K}\right)^{m}[1]
%$$
%and analogously
\begin{equation}\label{y^{c}}
\mathcal{K}\left[\Delta^{\frac{n-1}{2}}((xy^c)^{-k})_{0}\right]=\frac{1}{|y|^{2k}}(n-2)!i^{n-1}\frac{(-1)^{k}}{(k-1)!}\left(\mathcal{K}\langle y,\nabla_x \rangle\mathcal{K}\right)^{k}[1]
\end{equation}

If now $y$ is any element different from zero, we have
$$
\left((xy^{-1})^{-k}\right)_{0}=\left(\left(x\frac{y^{c}}{|y|^{2}}\right)^{-k}\right)_{0}=\left((xy^{c})^{-k}|y^{2k}|\right)_{0}=|y|^{2k}\left((xy^{c})^{-k}\right)_{0}.
$$
Thanks to the last equality, formula~\eqref{y^{c}} becomes
\begin{equation*}%\label{y^{-1}}
\mathcal{K}\left[\Delta^{\frac{n-1}{2}}((xy^{-1})^{-k})_{0}\right]=(n-2)!i^{n-1}\frac{(-1)^{k}}{(k-1)!}\left(\mathcal{K}\langle y,\nabla_x \rangle\mathcal{K}\right)^{k}[1].
\end{equation*}
Moreover, we recall from Theorem~\ref{th1} that,
$$
\frac{\left(\mathcal{K}\langle y,\nabla_x\rangle \mathcal{K}\right)^k}{k!}\left[1\right]=(-1)^{k}C_k^\lambda\left(\frac{\langle x,y\rangle}{|x||y|}\right)(|x||y|)^k
$$
Hence, recalling formula~\eqref{zonalGegenbauer}, we have proved the following result.
\begin{theorem} For any $x\in\mR^{n+1}$ and any $y\in\mR^{n+1}\setminus\{0\}$, we have
\begin{equation}\label{formulakelvinproduct}
\mathcal{K}\left[\Delta^{\frac{n-1}{2}}((xy^{-1})^{-k})_{0}\right]=(n-1)!i^{n-1}\frac{k}{2k+n-1}Z_{k}^{\lambda}(x,y).
\end{equation}
\end{theorem}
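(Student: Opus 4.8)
The plan is to follow the route laid out in the computations preceding the statement, turning everything into a directional derivative of $\mathcal K[1]=|x|^{1-n}$ so that Theorem~\ref{th1} can be applied at the very end. First I would pass from $y^{-1}$ to $y^{c}$: since $y^{-1}=y^{c}/|y|^{2}$ one has $(xy^{-1})^{-k}=|y|^{2k}(xy^{c})^{-k}$, so it suffices to treat $\mathcal K[\Delta^{\frac{n-1}{2}}((xy^{c})^{-k})_{0}]$ and reinstate the factor $|y|^{2k}$ at the end (it will cancel). The key reduction is the identity
\[
((xy^{c})^{-k})_{0}=\frac{1}{|y|^{2(k-1)}}\frac{(-1)^{k-1}}{(k-1)!}\,\langle y,\nabla_{x}\rangle^{k-1}\!\left(\frac{\langle x,y\rangle}{|x|^{2}|y|^{2}}\right),
\]
which I would prove by checking the base case $k=1$ (where $(xy^{c})^{-1}=yx^{c}/(|x|^{2}|y|^{2})$ has scalar part $\langle x,y\rangle/(|x|^{2}|y|^{2})$) and then verifying that one application of $-\tfrac{1}{k|y|^{2}}\langle y,\nabla_{x}\rangle$ carries the scalar part of $(xy^{c})^{-k}$ to that of $(xy^{c})^{-(k+1)}$. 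I would also record $\langle y,\nabla_{x}\rangle\mathcal K[1]=(1-n)\langle x,y\rangle|x|^{-n-1}$, i.e. $\langle x,y\rangle/|x|^{n+1}=\tfrac{1}{1-n}\langle y,\nabla_{x}\rangle\mathcal K[1]$.

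\textbf{Applying the fractional Laplacian.} By \eqref{commuting}, $\Delta^{\frac{n-1}{2}}$ commutes with scalar multiplication and with $\langle y,\nabla_{x}\rangle$, so it may be pulled through the $(k-1)$ outer directional derivatives, leaving $\Delta^{\frac{n-1}{2}}\!\big(\langle x,y\rangle/(|x|^{2}|y|^{2})\big)$ to be computed. Here $\langle\,\cdot\,,y\rangle$ is a harmonic polynomial of degree $1$, so \eqref{fourierpol} (with $k=1$, ambient dimension $n+1$, and the appropriate value of $a$) gives that its Fourier transform is a constant multiple of $\langle\xi,y\rangle/|\xi|^{n+1}$; multiplying by the symbol $(i|\xi|)^{n-1}$ of $\Delta^{\frac{n-1}{2}}$ and applying \eqref{fourierpol} again in the inverse direction returns a constant multiple of $\langle x,y\rangle/|x|^{n+1}=\tfrac{1}{1-n}\langle y,\nabla_{x}\rangle\mathcal K[1]$. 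The accumulated constant is $\gamma_{1,n}/\gamma_{1,2}$, which collapses via the Legendre duplication formula for $\Gamma$ to $2^{2-n}(n-1)!$ as recorded earlier. Carrying this through and rearranging factorials yields
\[
\Delta^{\frac{n-1}{2}}\big((xy^{c})^{-k}\big)_{0}=\frac{1}{|y|^{2k}}(n-2)!\,i^{n-1}\frac{(-1)^{k}}{(k-1)!}\,\langle y,\nabla_{x}\rangle^{k}\mathcal K[1].
\]

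\textbf{Outer Kelvin inversion and identification.} I would then apply $\mathcal K$ to both sides and use $\mathcal K^{2}=\mathrm{id}$ to insert a factor $\mathcal K\mathcal K$ between consecutive directional derivatives, so that $\mathcal K\big[\langle y,\nabla_{x}\rangle^{k}\mathcal K[1]\big]=\big(\mathcal K\langle y,\nabla_{x}\rangle\mathcal K\big)^{k}[1]$. Theorem~\ref{th1} now evaluates this ladder operator as $(-1)^{k}k!\,C_{k}^{\lambda}(w)(|x||y|)^{k}$. Reinstating the factor $|y|^{2k}$ from the reduction $(xy^{-1})^{-k}=|y|^{2k}(xy^{c})^{-k}$, substituting $C_{k}^{\lambda}(w)(|x||y|)^{k}=\tfrac{\lambda}{k+\lambda}Z_{k}^{\lambda}(x,y)$ from \eqref{zonalGegenbauer}, and simplifying the constant using $\lambda=\tfrac{n-1}{2}$, $\tfrac{(-1)^{k}(-1)^{k}k!}{(k-1)!}=k$ and $(n-2)!(n-1)=(n-1)!$ produces exactly \eqref{formulakelvinproduct}.

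\textbf{Main obstacle.} The two genuinely delicate points are: (i) establishing cleanly the directional-derivative representation of $((xy^{c})^{-k})_{0}$ in the first step — this is precisely what makes the whole expression collapse onto the ladder operator $\mathcal K\langle y,\nabla_{x}\rangle\mathcal K$ of Section~\ref{ladder}; and (ii) the analytic justification of the Fourier manipulations, since the functions involved are homogeneous and only locally integrable away from the origin, so the computation must be read in the sense of tempered distributions, and \eqref{fourierpol} requires $0<a<n+1$, which forces one to handle small $n$ separately, falling back on the explicit two-dimensional identities \eqref{2dim1}, \eqref{2dim2} and \eqref{chebyshev}. The bookkeeping of the $\Gamma$-factors via the duplication formula is routine but error-prone.
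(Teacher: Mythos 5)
Your proposal is correct and follows essentially the same route as the paper: the reduction of $((xy^{c})^{-k})_{0}$ to $\langle y,\nabla_{x}\rangle^{k-1}$ applied to $\langle x,y\rangle/(|x|^{2}|y|^{2})$, the computation of $\Delta^{\frac{n-1}{2}}$ via \eqref{fourierpol} with the constant $\gamma_{1,n}/\gamma_{1,2}=2^{2-n}(n-1)!$, the insertion of $\mathcal{K}^{2}=\mathrm{id}$ to produce the ladder operator, and the final appeal to Theorem~\ref{th1} and \eqref{zonalGegenbauer} all match the paper's argument, with the same constants. The only difference is cosmetic (you move the $|y|^{2k}$ rescaling to the start rather than the end), and your added remarks on the distributional reading of the Fourier step and the small-$n$ cases are reasonable caveats the paper leaves implicit.
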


%\begin{proof}
%\textcolor{red}{rewrite next sentence}
%The equality follows thanks to previous considerations, to formulas~\eqref{zonalGegenbauer} and~\eqref{y^{-1}} and because of the following computations
%\begin{align*}
%\mathcal{K}\left[\Delta^{\frac{n-1}{2}}((xy^{-1})^{-k})_{0}\right]& =(n-2)!i^{n-1}\frac{(-1)^{k}}{(k-1)!}\left(\mathcal{K}\langle y,\nabla_x \rangle\mathcal{K}\right)^{k}[1]\\
%&=(n-2)!i^{n-1}\frac{(-1)^{k}}{(k-1)!}(-1)^{k}k!C_k^\lambda\left(\frac{\langle x,y\rangle}{|x||y|}\right)(|x||y|)^k\\
%&=(n-2)!i^{n-1}kC_k^\lambda\left(\frac{\langle x,y\rangle}{|x||y|}\right)(|x||y|)^k.
%\end{align*}
%\end{proof}

%
%\begin{align*}
%\mathcal{K}[\Delta^{\frac{n-1}{2}}(x^{-m})_{0}]& =(n-2)!i^{n-1}\frac{(-1)^{m}}{(m-1)!}\mathcal{K}\frac{\partial^{m}}{\partial x_{0}^{m}}\mathcal{K}[1]\\
%&=(n-2)!i^{n-1}\frac{(-1)^{m}}{(m-1)!}(-1)^{m}m!C^{\lambda}_{m}\left(\frac{x_{0}}{|x|}\right)|x|^{m}\\
%&=(n-2)!i^{n-1}mC^{\mu}_{m}\left(\frac{x_{0}}{|x|}\right)|x|^{m}.
%\end{align*}
\begin{remark}
Notice that for $n=1$ (and so when $\mR_{n}=\mC$), formula~\eqref{formulakelvinproduct} becomes exactly formula~\eqref{2dim2}.
\end{remark}

The last result of this paper is a reinterpretation of Theorem~\ref{fueterqian} point (3).

\begin{theorem}\label{zonalscalar}
Let $n+1$ be an even number and $n=2m+1$. Let the variables $x=x_{0}+\underline{x}$ and $y=y_{0}+\underline {y}$ belong to $\mathbb{R}^{n+1}$. The one has
\begin{equation*}
(\Delta_{y,n+1}\Delta_{x,n+1})^{m}\left[((xy^{c})^{k+2m})_{0}\right]=\eta_{k}^{m}\mathcal{K}\left[\Delta^{m}_{n+1,x}((xy^{-1})^{-k})_{0}\right]
\end{equation*}
where $\eta_{k}^{m}$ is given by,
\begin{equation*}%\label{gamma}
\eta_{k}^{m}=\frac{4^{2 m} (k + 2 m) \Gamma(m+1)^3 \Gamma(k + 2 m+1)}{k(2m)! \Gamma(k + m+1)}.
\end{equation*}
\end{theorem}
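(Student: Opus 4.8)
The plan is to combine the two explicit representations of zonal harmonics already established: Theorem~\ref{zonalscalar1} (the one obtained via iterated Laplacians, which I will call the ``$\Delta^m$-side'') and formula~\eqref{formulakelvinproduct} (the ``Kelvin-side''), and simply divide one by the other to extract $\eta_k^m$. Concretely, for $n=2m+1$ we have on one hand, by Theorem~\ref{zonalscalar} (equation~\eqref{zonalscalar1}),
\[
(\Delta_{y,n+1}\Delta_{x,n+1})^{m}\left[((xy^{c})^{k+2m})_{0}\right]=\tfrac{1}{2}\widehat{\beta}_{k}^{m}Z^{m}_{k}(x,y),
\]
with $\widehat{\beta}_{k}^{m}$ given by~\eqref{alpha}, and on the other hand, by~\eqref{formulakelvinproduct} specialized to $\lambda=\frac{n-1}{2}=m$,
\[
\mathcal{K}\left[\Delta^{m}_{n+1,x}((xy^{-1})^{-k})_{0}\right]=(n-1)!\,i^{n-1}\frac{k}{2k+n-1}Z_{k}^{m}(x,y)=(2m)!\,i^{2m}\frac{k}{2k+2m}Z_{k}^{m}(x,y).
\]
Since $i^{2m}=(-1)^m$, the second expression equals $(-1)^m(2m)!\,\frac{k}{2(k+m)}Z_k^m(x,y)$.

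Next I would note that both right-hand sides are nonzero scalar multiples of the \emph{same} function $Z_k^m(x,y)$ (this is the point where one uses that the zonal harmonic is a well-defined nonzero object for generic $x,y$, so one may cancel it). Therefore the identity holds with
\[
\eta_k^m=\frac{\tfrac{1}{2}\widehat{\beta}_{k}^{m}}{(-1)^m(2m)!\,\frac{k}{2(k+m)}}=\frac{(k+m)\,\widehat{\beta}_{k}^{m}}{(-1)^m\,k\,(2m)!}.
\]
Substituting the explicit value
\[
\widehat{\beta}_{k}^{m}=(-1)^{m}\frac{4^{2 m} (k + 2 m) \Gamma(m+1)^3 \Gamma(k + 2 m+1)}{(k + m) \Gamma(k + m+1)}
\]
from~\eqref{alpha}, the factor $(-1)^m$ cancels, the factor $(k+m)$ cancels, and one is left with exactly
\[
\eta_k^m=\frac{4^{2 m} (k + 2 m) \Gamma(m+1)^3 \Gamma(k + 2 m+1)}{k\,(2m)!\,\Gamma(k + m+1)},
\]
which is the claimed constant. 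The bookkeeping is entirely elementary: it is just matching Gamma-function factors and sign powers.

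The only genuinely non-routine issue — and the step I would flag as the ``obstacle'' — is making precise why the cancellation of $Z_k^m(x,y)$ is legitimate and why the two sides are \emph{proportional} rather than merely ``both zonal of degree $k$''. Here one should invoke that, as elements of the one-dimensional space spanned by the reproducing kernel evaluated at a fixed pole (equivalently, by~\eqref{zonalGegenbauer}, the map $C_k^m$ applied to $w$ times $(|x||y|)^k$), any two expressions that are scalar multiples of $Z_k^m(x,y)$ are scalar multiples of each other, and the scalar is read off by comparing a single coefficient — for instance the leading coefficient in $w$, or the value at $x=y$. Once that is said, the proof reduces to the Gamma-arithmetic above. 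I would therefore structure the write-up as: (i) recall the two formulas, (ii) specialize $\lambda=m$ and simplify $i^{2m}=(-1)^m$, (iii) divide and simplify, citing~\eqref{alpha} for $\widehat{\beta}_k^m$, and (iv) conclude.
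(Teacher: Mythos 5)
Your proposal is correct and follows essentially the same route as the paper: the paper's proof likewise just combines equation~\eqref{zonalscalar1} with formula~\eqref{formulakelvinproduct} specialized to $n=2m+1$ (so $\lambda=m$, $i^{n-1}=(-1)^m$), solves for $Z_k^m$, and reads off $\eta_k^m$ by the same Gamma-function bookkeeping. Your extra remark on why cancelling the common factor $Z_k^m(x,y)$ is legitimate is a point the paper leaves implicit, but it does not change the argument.
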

\begin{proof}
We recall from equation~\eqref{zonalscalar1} that
\begin{equation*}
(\Delta_{y,n+1}\Delta_{x,n+1})^{m}\left[((xy^{c})^{k+2m})_{0}\right]=\frac{1}{2}\widehat{\beta}_{k}^{m}Z^{m}_{k}(x,y),
\end{equation*}
where,
\begin{equation*}
\frac{1}{2}\widehat{\beta}_{k}^{m}=\frac{1}{2}(-1)^{m}\frac{4^{2 m} (k + 2 m) \Gamma(m+1)^3 \Gamma(k + 2 m+1)}{(k + m) \Gamma(k + m+1)}.
\end{equation*}
Moreover, from equation~\eqref{formulakelvinproduct} we have that,
\begin{equation*}
Z_{k}^{m}(x,y)= (-1)^{-m}\frac{2(k+m)}{k(2m)!}\mathcal{K}\left[\Delta^{m}((xy^{-1})^{-k})_{0}\right]   
\end{equation*}
Hence
\begin{equation*}
\eta_{k}^{m}=\frac{4^{2 m} (k + 2 m) \Gamma(m+1)^3 \Gamma(k + 2 m+1)}{k(2m)! \Gamma(k + m+1)}
\end{equation*}

\end{proof}

\begin{remark}
When $m=0$ (i.e. the dimension is 2), we have that for any $k\in\mathbb{N}$, $\eta_{k}^{0}=1$ and hence
$$
\left((xy^{c})^{k}\right)_{0}=\mathcal{K}\left[\left((xy^{-1})^{-k}\right)_{0}\right],
$$
exactly as established in the introduction.
\end{remark}

\begin{remark} 
If $y$ is fixed and such that $|y|=1$ (so that $y^{-1}=y^{c}$), the coefficient $\eta_{k}^{m}$ simplifies a bit as explained in Remark~\ref{simple}.
In this case we have
\begin{equation*}
\Delta_{x,n+1}^{m}\left[((xy^{c})^{k+2m})_{0}\right]=4^{m}\frac{k+2m}{k(2m)!}(m!)^{2}\mathcal{K}\left[\Delta^{m}_{n+1,x}((xy^{c})^{-k})_{0}\right]
\end{equation*}
%
%
%\begin{equation*}
%\frac{1}{2}\widehat{\beta}_{k}^{m}=\frac{1}{2}(-1)^{m}4^{m}\frac{k+2m}{k+m}(m!)^{2},
%\end{equation*}
%and hence
%\begin{equation*}
%\eta_{k}^{m}=4^{m}\frac{k+2m}{k(2m)!}(m!)^{2}.
%\end{equation*}
Hence, if $m=1$ (i.e. $n+1=4$), denoting by $\Delta$ the Laplacian in $\mR^{4}$ with respect to the variable $x$, we get
\begin{equation*}
\Delta \left[(x^{k+2})_{0}\right]=\eta_{k}^{1}\mathcal{K}\left[\Delta(x^{-k})_{0}\right]=2\frac{k+2}{k}\mathcal{K}\left[\Delta(x^{-k})_{0}\right]
\end{equation*}
This last formula can be considered an analog of~\cite[Proposition 5.1 (c)]{perotticlifford} for the spherical derivative.
\end{remark}

%$$
%\left(\frac{\Delta_x}{|y|^2}\right)^{m-2}\left(|x||y|\right)^{2(m-2)+k} C_{2(m-2)+k}^1\left(\frac{\langle x,y\rangle}{|x||y|}\right)=(-4)^{m-2}\left((m-2)!\right)^2C_{k}^{m-1}\left(\frac{\langle x,y\rangle}{|x||y|}\right)(|x||y|)^k$$
%
%Hence, if $n=2m+1$, we get
%
%\begin{align*}
%\mathcal{K}[\Delta^{\frac{n-1}{2}}((xy^c)^{-k})_{0}]&=\frac{1}{|y|^{2k}}(2m-1)!(-1)^{m}k\frac{1}{(-4)^{m-2}((m-2)!)^2}\left(\frac{\Delta_x}{|y|^2}\right)^{m-2}((xy^c)^{2(m-2)+k})_0\\
%&=\frac{k}{|y|^{2k}}\frac{(-1)^m (2m-1)!}{(-4)^{m-2}((m-2)!)^2}\left(\frac{\Delta_x}{|y|^2}\right)^{m-2}((xy^c)^{2(m-2)+k})_0\\
%&=\frac{k}{|y|^{2k}}\left(2^{3-2 m} (m-1)^2 m \binom{2 m}{m}\right)\left(\frac{\Delta_x}{|y|^2}\right)^{m-2}((xy^c)^{2(m-2)+k})_0\\
%&=\frac{k}{|y|^{2k}}\left(\frac{2^{4-2 m} \Gamma (2 m)}{\Gamma (m-1)^2}\right)\left(\frac{\Delta_x}{|y|^2}\right)^{m-2}((xy^c)^{2(m-2)+k})_0\\
%&=\frac{k}{|y|^{2k}}\left(\frac{4^{2- m} \Gamma (2 m)}{\Gamma (m-1)^2}\right)\left(\frac{\Delta_x}{|y|^2}\right)^{m-2}((xy^c)^{2(m-2)+k})_0
%\end{align*}

\appendix

\section{Explicit computations of the action of a double Laplacian}\label{iterate}
In this appendix we show that computing explicitly the double Laplacian considered in Section~\ref{iterated} might be 
much more difficult than our shortcuts.
As in previous sections, we will use the following notation $w=\frac{\langle x, y\rangle}{|x||y|}$.
For the convenience of the reader in what follows, all the computation in this section are done in
dimension $N$. 

We will make use of formula~\eqref{B2} together with the following
\begin{equation}\label{B2bis}
\partial^{2}_{x_{i}}(w)=\frac{1}{|x|^{3}|y|}\left(-3x_{i}y_{i}-\langle x,y\rangle+3\langle x,y\rangle\frac{ x_{i}^{2}}{|x|^{2}}\right),
\end{equation}
and
\begin{equation}\label{B3}
\left(\partial_{x_{i}}(w)\right)^{2}=\left(\frac{y_{i}}{|x||y|}-\frac{\langle x,y\rangle}{|x|^{3}|y|}x_{i}\right)^{2}=\frac{1}{(|x||y|)^{2}}\left(y_{i}^{2}-2\langle x,y\rangle \frac{x_{i}y_{i}}{|x|^{2}}+\langle x,y \rangle^{2}\frac{x_{i}^{2}}{|x|^{4}}\right).
\end{equation}
We start from the first partial derivative:
$$
\partial_{x_{i}}\left(C_{k}^{\lambda}(w)|x|^{\ell}\right)=2\lambda C^{\lambda+1}_{k-1}(w)|x|^{\ell}\left(\frac{y_{i}}{|x||y|}-\frac{\langle x,y\rangle}{|x|^{3}|y|}x_{i}\right)+\ell C^{\lambda}_{k}(w)|x|^{\ell-2}x_{i}.
$$

Using formulas~\eqref{B2bis} and~\eqref{B3}, the second derivative follows:
\begin{align*}
\partial^{2}_{x_{i}}\left(C_{k}^{\lambda}(w)|x|^{\ell}\right)=&2\lambda(2\lambda+2)C^{\lambda+2}_{k-2}(w)|x|^{\ell}\left(\frac{y_{i}}{|x||y|}-\frac{\langle x,y\rangle}{|x|^{3}|y|}x_{i}\right)^{2}+\\
&+(2\lambda)\ell C^{\lambda+1}_{k-1}(w)|x|^{\ell-2}x_{i}\left(\frac{y_{i}}{|x||y|}-\frac{\langle x,y\rangle}{|x|^{3}|y|}x_{i}\right)+\\
&+2\lambda C^{\lambda+1}_{k-1}(w)|x|^{\ell}\left(-\frac{x_{i}y_{i}}{|x|^{3}|y|}-2\frac{x_{i}y_{i}}{|x|^{3}|y|}-\frac{\langle x,y\rangle}{|x|^{3}|y|}+3\frac{\langle x,y\rangle x_{i}^{2}}{|x|^{5}|y|}\right)+\\
&+2\lambda\ell C^{\lambda+1}_{k-1}(w)|x|^{\ell-2}x_{i}\left(\frac{y_{i}}{|x||y|}-\frac{\langle x,y\rangle}{|x|^{3}|y|}x_{i}\right)+\\
&+C^{\lambda}_{k}(w)\ell(\ell-2)|x|^{\ell-4}x_{i}^{2}+\ell C^{\lambda}_{k}(w)|x|^{\ell-2}.
\end{align*}

We are able to compute the Laplacian with respect to the variable $x$:
\begin{align*}
\Delta_{N,x}\left[C^{\lambda}_{k}(w)|x|^{\ell}\right]=&\sum_{i=1}^{N}\partial_{x_{i}}^{2}(C^{\lambda}_{k}(w)|x|^{\ell})= N\ell C^{\lambda}_{k}(w)|x|^{\ell-2}+\ell(\ell-2)C^{\lambda}_{k}(w)|x|^{\ell-2}+\\
&+2\lambda C^{\lambda+1}_{k-1}(w)|x|^{\ell}\left(\frac{\langle x,y\rangle}{|x||y|}\right)(1-N)+\\
&+2\lambda(2\lambda+2)C^{\lambda+2}_{k-2}(w)|x|^{\ell}\left(\frac{1}{|x|^{2}}-\frac{\langle x,y\rangle^{2}}{|x|^{4}|y|^{2}}\right)\\
=& \ell(N+\ell-2)C^{\lambda}_{k}(w)|x|^{\ell-2}+2\lambda(1-N)wC^{\lambda+1}_{k-1}(w)|x|^{\ell-2}+\\
&+2\lambda(2\lambda+2)(1-w^{2})C^{\lambda+2}_{k-2}(w)|x|^{\ell-2}\\
=&2\lambda(2\lambda+2-n)wC^{\lambda+1}_{k-1}(w)|x|^{\ell-2}+[\ell(N+\ell-2)-k(k+2\lambda)]C^{\lambda}_{k}(w)|x|^{\ell-2}.
\end{align*}
Now, thanks to formula~\eqref{G3}, we get
\begin{align*}
\Delta_{N,x}\left[C^{\lambda}_{k}(w)|x|^{\ell}\right]=& |x|^{\ell-2}\{(2\lambda+2-N)(2\lambda C^{\lambda+1}_{k-2}(w)+kC^{\lambda}_{k}(w))+\\
&+(\ell(N+\ell-2)-k(k+2\lambda)C^{\lambda}_{k}(w))\}\\
=& |x|^{\ell-2}\{(2\lambda)(2\lambda+2-N)C^{\lambda+1}_{k-2}(w)+(\ell-k)(N+k+\ell-2)C^{\lambda}_{k}(w)\}.
\end{align*}

The previous formula, in the special case $\ell=k$, gives:
\begin{align*}
\Delta_{N,x}\left[C^{\lambda}_{k}(w)|x|^{k}\right]=& 2\lambda(2\lambda+2-N)wC^{\lambda+1}_{k-1}(w)|x|^{k-2}+k(N-2-2\lambda)C^{\lambda}_{k}(w)|x|^{k-2}\\
=&(2\lambda+2-N)|x|^{k-2}[2\lambda wC^{\lambda+1}_{k-1}(w)-kC^{\lambda}_{k}(w)]\\
=&(2\lambda+2-N)|x|^{k-2}(2\lambda)C^{\lambda+1}_{k-2}(w),
\end{align*}
where in the last equality we have used formula~\eqref{G4}.
Therefore $C^{\lambda}_{k}(w)|x|^{k}$ is harmonic if and only if $2\lambda+2-N=0$ if and only if $\lambda=\frac{N-2}{2}$.

We now compute the Laplacian with respect to the variable $y$:
\begin{align*}
&\Delta_{N,y}\Delta_{N,x}\left[C^{\lambda}_{k}(w)|x|^{k}|y|^{k}\right] = \Delta_{N,y}\left[2\lambda(2\lambda+2-N)C^{\lambda+1}_{k-2}(w)|x|^{k-2}|y|^{k}\right]\\
&=  2\lambda(2\lambda+2-N)|x|^{k-2}\Delta_{N,y}\left[C^{\lambda+1}_{k-2}(w)|y|^{k}\right]\\
&=2\lambda(2\lambda+2-N)(|x||y|)^{k-2}\left[2(N+2k-4)C^{\lambda+1}_{k-2}(w)+(2\lambda+2)(2\lambda +4-N)C^{\lambda+2}_{k-4}(w)\right]\\
&=(|x||y|)^{k-2}\left\{2\lambda(2\lambda+2-N)2(N+2k-4)C^{\lambda+1}_{k-2}(w)\right.\\
&\quad\left.+2\lambda(2\lambda+2)(2\lambda+2-N)(2\lambda+4-N)C^{\lambda+2}_{k-4}(w)\right\}.
\end{align*}

For general integer powers of the norm $|x| |y|$ we have, after similar computations
\begin{prop}
\label{dirlap}
One has, for $\ell\in\mN$
\begin{align*}
&\Delta_{N,y}\Delta_{N,x}\left[C^{\lambda}_{k}(w)|x|^{\ell}|y|^{\ell}\right]=\\
&=|x|^{\ell-2}\left\{ 2\lambda(2\lambda+2-N)\Delta_{N,y}\left[C^{\lambda+1}_{k-2}(w)|y|^{\ell}\right]\right.\\
&\quad\left.+(\ell-k)(N+k+\ell-2)|x|^{\ell-2}\Delta_{N,y}\left[C^{\lambda}_{k}(w)|y|^{\ell}\right] \right\}\\
&=(|x||y|)^{\ell-2}\left\{ (\ell-k)(N+k+\ell-2)\left[2\lambda(2\lambda+2-N)C^{\lambda+1}_{k-2}(w)+(\ell-k)(N+k+\ell-2)C^{\lambda}_{k}(w)\right]\right.\\
&\quad\left.+2\lambda(2\lambda+2-N)\left[(2\lambda+2)(2\lambda+4-N)C^{\lambda+2}_{k-4}(w)+(\ell-k+2)(N+k+\ell-4)C^{\lambda+1}_{k-2}(w)\right] \right\}
\end{align*}
\end{prop}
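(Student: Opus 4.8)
The plan is to obtain the double Laplacian by two successive applications of the single‑variable identity already derived in this appendix, namely
\[
\Delta_{N,x}\left[C^{\lambda}_{k}(w)|x|^{\ell}\right]= |x|^{\ell-2}\left\{2\lambda(2\lambda+2-N)C^{\lambda+1}_{k-2}(w)+(\ell-k)(N+k+\ell-2)C^{\lambda}_{k}(w)\right\},
\]
together with two simple structural facts. First, $w=\langle x,y\rangle/(|x||y|)$ is symmetric under the exchange $x\leftrightarrow y$, so the very same identity holds with the roles of $x$ and $y$ interchanged. Second, $\Delta_{N,x}$ treats any function of $y$ alone as a scalar (and conversely), and the two Laplacians commute, so the order in $\Delta_{N,y}\Delta_{N,x}$ is immaterial and we may peel them off one at a time.

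First I would apply $\Delta_{N,x}$ to $C^{\lambda}_{k}(w)|x|^{\ell}|y|^{\ell}$, carrying $|y|^{\ell}$ along as a constant. The displayed identity immediately expresses the result as $|x|^{\ell-2}$ times a linear combination of $C^{\lambda+1}_{k-2}(w)|y|^{\ell}$ and $C^{\lambda}_{k}(w)|y|^{\ell}$, with coefficients $2\lambda(2\lambda+2-N)$ and $(\ell-k)(N+k+\ell-2)$ respectively. Pulling $|x|^{\ell-2}$ out front and leaving $\Delta_{N,y}$ still to act gives exactly the first displayed equality in the statement.

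Next I would apply $\Delta_{N,y}$ to each of the two surviving terms using the $x\leftrightarrow y$ version of the identity. On $C^{\lambda}_{k}(w)|y|^{\ell}$ this regenerates $C^{\lambda+1}_{k-2}$ and $C^{\lambda}_{k}$ with the same pair of coefficients as before. On $C^{\lambda+1}_{k-2}(w)|y|^{\ell}$ one must insert $\lambda\mapsto\lambda+1$ and $k\mapsto k-2$: the first coefficient becomes $2(\lambda+1)(2(\lambda+1)+2-N)=(2\lambda+2)(2\lambda+4-N)$ multiplying $C^{\lambda+2}_{k-4}$, and the second becomes $(\ell-(k-2))(N+(k-2)+\ell-2)=(\ell-k+2)(N+k+\ell-4)$ multiplying $C^{\lambda+1}_{k-2}$. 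Substituting back, factoring out $(|x||y|)^{\ell-2}$, and grouping the two Gegenbauer functions that occur twice yields the second displayed equality.

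The argument is purely mechanical, so there is no real conceptual obstacle; the one place where care is needed is the bookkeeping of the parameter shifts $\lambda\mapsto\lambda+1$, $k\mapsto k-2$ in the nested application — an off‑by‑two in the $N+k+\ell$ expression or a dropped factor would be the natural slip — and, if one wishes the identity to hold for all $k\in\mN$ including small $k$, one should note that it remains valid under the convention $C^{\mu}_{j}\equiv 0$ for $j<0$, so that the $C^{\lambda+2}_{k-4}$ or $C^{\lambda+1}_{k-2}$ terms simply drop out when their lower index is negative.
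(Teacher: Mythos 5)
Your proposal is correct and is exactly the route the paper takes: the appendix first derives the single-variable identity $\Delta_{N,x}[C^{\lambda}_{k}(w)|x|^{\ell}]=|x|^{\ell-2}\{2\lambda(2\lambda+2-N)C^{\lambda+1}_{k-2}(w)+(\ell-k)(N+k+\ell-2)C^{\lambda}_{k}(w)\}$ and then obtains the proposition "after similar computations," i.e.\ by applying it once in $x$ and once in $y$ (using the symmetry of $w$) with the parameter shifts $\lambda\mapsto\lambda+1$, $k\mapsto k-2$ that you track. Your bookkeeping of the shifted coefficients matches the stated result, and your remark about the convention $C^{\mu}_{j}\equiv 0$ for $j<0$ is a sensible addition; note only that the stray factor $|x|^{\ell-2}$ in the second term of the paper's first displayed equality is a typo, consistent with your (correct) version.
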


Notice that when $N=6$, $\lambda=1$ and $k=\ell$, Proposition \ref{dirlap} yields
\begin{align*}
&\Delta_{6,y}\Delta_{6,x}\left[C^{1}_{k}(w)|x|^{k}|y|^{k}\right]=\\
&=(|x||y|)^{k-2}\left\{2(2+2-6)2(6+2k-4)C^{1+1}_{k-2}(w)+2(2+2)(2+2-6)(2+4-6)C^{1+2}_{k-4}(w)\right\}\\
&=-16(1+k)C^{2}_{k-2}(w)(|x||y|)^{k-2}
\end{align*}
which, up to the prefactor, is exactly the zonal harmonic of degree $k-2$. Moreover this last result coincides with 
the one given in Theorem~\ref{thmiterated}.

\section{Explicit computations in Dimension 4}\label{4D}
In this second appendix we prove a generalization of some equalities presented in Section~\ref{slicereg} and
introduced by A.~Perotti~\cite{perotticlifford}.
In the whole appendix $x$ and $y$ will denote two paravectors, i.e. $x=x_{0}+\underline{x}, y=y_{0}+\underline{y}\in\R^{n+1}$.

If we extend the usual notation for the spherical value and spherical derivative as
$$
((xy^{c})^{k})_{s}^{\circ}=((xy^{c})^{k})_{0}=\frac{1}{2}((xy^{c})^{k}+(yx^{c})^{k}),\qquad ((xy^{c})^{k})_{s}'=[xy^{c}-yx^{c}]^{-1}[(xy^{c})^{k}-(yx^{c})^{k}],
$$
then, by direct computation, we have 
\begin{equation}\label{zydeco}
(xy^{c})^{k}=((xy^{c})^{k})_{s}^{\circ}+\underline{(xy^{c})}((xy^{c})^{k})_{s}',
\end{equation}
where $\underline{(xy^{c})}\in\mR^{n}$ is the vector part of $xy^{c}$.
In the first lemma we give a computational proof of~\cite[Theorem 5.1 (b)]{perotticlifford}.
\begin{lemma}
For any $k\geq 0$, we have the following equality
$$
((xy^{c})^{k+1})_{s}'=\frac{1}{k+1}Z^{1}_{k}(x,y).
$$
\end{lemma}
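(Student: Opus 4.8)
The plan is to evaluate both sides explicitly inside the commutative subalgebra generated by $a:=xy^{c}$. First I would record the two scalar identities $a+a^{c}=2\langle x,y\rangle$ and $a\,a^{c}=(xy^{c})(yx^{c})=x|y|^{2}x^{c}=|x|^{2}|y|^{2}$, using $a^{c}=(xy^{c})^{c}=yx^{c}$. These show that $a^{c}=2\langle x,y\rangle-a\in\mathbb{R}[a]$, so $a$ and $a^{c}$ commute, and that $a$ is a root of $t^{2}-2\langle x,y\rangle t+|x|^{2}|y|^{2}$. Writing $b:=a-\langle x,y\rangle$ for the non-real part of $xy^{c}$ (the $\underline{(xy^{c})}$ of~\eqref{zydeco}), the characteristic equation gives $b^{2}=\langle x,y\rangle^{2}-|x|^{2}|y|^{2}=(|x||y|)^{2}(w^{2}-1)\le 0$, with $w=\frac{\langle x,y\rangle}{|x||y|}$. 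Hence, on the open dense set where $b\neq 0$, setting $\nu:=\sqrt{-b^{2}}=|x||y|\sqrt{1-w^{2}}$ and $J:=b/\nu$, one checks $J+J^{c}=0$ and $JJ^{c}=-b^{2}/\nu^{2}=1$, so $J\in\mathbb{S}$ and $xy^{c}=\langle x,y\rangle+\nu J=|x||y|(\cos\theta+J\sin\theta)\in\mathbb{C}_{J}$ with $\cos\theta:=w$; correspondingly $yx^{c}=a^{c}=|x||y|(\cos\theta-J\sin\theta)$.

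Next I would unwind the spherical derivative. Since $a$ and $a^{c}$ commute, the telescoping identity gives $a^{k+1}-(a^{c})^{k+1}=(a-a^{c})\sum_{j=0}^{k}a^{j}(a^{c})^{k-j}$, and $a-a^{c}=2b$ is invertible (as $(2b)^{c}=-2b$ and $(2b)(2b)^{c}=-4b^{2}>0$). Therefore
\[
((xy^{c})^{k+1})_{s}'=(a-a^{c})^{-1}\bigl(a^{k+1}-(a^{c})^{k+1}\bigr)=\sum_{j=0}^{k}a^{j}(a^{c})^{k-j}.
\]
Plugging in the $\mathbb{C}_{J}$-representation (De Moivre in $\mathbb{C}_{J}\cong\mathbb{C}$) gives $a^{j}(a^{c})^{k-j}=(|x||y|)^{k}(\cos j\theta+J\sin j\theta)(\cos(k-j)\theta-J\sin(k-j)\theta)=(|x||y|)^{k}\bigl(\cos((2j-k)\theta)+J\sin((2j-k)\theta)\bigr)$. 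Summing over $j=0,\dots,k$, the $J$-terms cancel by the symmetry $j\leftrightarrow k-j$, leaving $((xy^{c})^{k+1})_{s}'=(|x||y|)^{k}\sum_{j=0}^{k}\cos((2j-k)\theta)=(|x||y|)^{k}\,\frac{\sin((k+1)\theta)}{\sin\theta}$ by the elementary Dirichlet-kernel computation. Since the Gegenbauer polynomial of order $1$ is the Chebyshev polynomial of the second kind, $C_{k}^{1}(\cos\theta)=\sin((k+1)\theta)/\sin\theta$ (read off from~\eqref{explicitgegenbauer}), this says $((xy^{c})^{k+1})_{s}'=(|x||y|)^{k}C_{k}^{1}(w)$, and~\eqref{zonalGegenbauer} with $\lambda=1$, namely $Z_{k}^{1}(x,y)=(k+1)C_{k}^{1}(w)(|x||y|)^{k}$, yields $((xy^{c})^{k+1})_{s}'=\frac{1}{k+1}Z_{k}^{1}(x,y)$.

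It then remains to cover the degenerate locus $b=\underline{(xy^{c})}=0$ (equivalently $\underline{x},\underline{y}$ linearly dependent), where $a=a^{c}=\pm|x||y|$ is scalar: there $\sum_{j=0}^{k}a^{j}(a^{c})^{k-j}=(k+1)a^{k}$ matches $(|x||y|)^{k}C_{k}^{1}(\pm1)=(|x||y|)^{k}(\pm1)^{k}(k+1)$, and one may alternatively just observe that both sides of the claimed identity are polynomials in $(x,y)$ (the left side equals the polynomial $\sum_{j=0}^{k}(xy^{c})^{j}(yx^{c})^{k-j}$) agreeing on a dense open set, hence everywhere — which also handles $y=0$. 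The only obstacle I anticipate is bookkeeping: tracking the commutativity/invertibility of $a,a^{c}$ and recalling $C_{k}^{1}=U_{k}$; the telescoping and the trigonometric sum are routine. As a cross-check, for $|y|=1$ the identity follows at once from the formulas of Perotti recalled in Section~\ref{slicereg} (the one for $k(x^{k})_{s}'$, applied with $x$ replaced by $xy^{c}$), and one passes to arbitrary $y\neq 0$ using that both sides are homogeneous of degree $k$ in $y$, i.e. $Z_{k}^{1}(x,ty)=t^{k}Z_{k}^{1}(x,y)$ for $t>0$.
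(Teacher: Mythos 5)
Your proof is correct, but it takes a genuinely different route from the paper's. The paper argues by brute force: it expands $(xy^{c})^{k+1}-(yx^{c})^{k+1}$ binomially in $\langle x,y\rangle+\underline{(xy^{c})}$, discards the even powers of $\underline{(xy^{c})}$, expands $|\underline{(xy^{c})}|^{2h}=(|x|^{2}|y|^{2}-\langle x,y\rangle^{2})^{h}$ a second time, and then matches the resulting double binomial sum with the explicit coefficients of $C^{1}_{k}$ in~\eqref{explicitgegenbauer} by recognizing a terminating $_{2}F_{1}$ at $1$, evaluated via Gauss's theorem and the duplication formula. You instead exploit that $a=xy^{c}$ and $a^{c}=yx^{c}$ have real sum $2\langle x,y\rangle$ and real product $|x|^{2}|y|^{2}$, so they live in a single slice $\C_{J}$ and admit a polar form; the spherical derivative $(a-a^{c})^{-1}(a^{k+1}-(a^{c})^{k+1})=\sum_{j=0}^{k}a^{j}(a^{c})^{k-j}$ then collapses, via De Moivre and the elementary sum $\sum_{j=0}^{k}\cos((2j-k)\theta)=\sin((k+1)\theta)/\sin\theta$, to $(|x||y|)^{k}C^{1}_{k}(w)$, and~\eqref{zonalGegenbauer} finishes. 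Your route is shorter, avoids all special-function machinery, and makes conceptually transparent why $C^{1}_{k}=U_{k}$ appears: the left-hand side is just the spherical derivative of the slice preserving power $t\mapsto t^{k+1}$ evaluated at the "complex" point $xy^{c}$, exactly in the spirit of Section~\ref{slicereg}; what the paper's computation buys instead is the explicit monomial expansion with coefficients $2^{k-2r}\binom{k-r}{r}$. Two minor remarks: your parenthetical identification of the degenerate locus $\underline{(xy^{c})}=0$ with "$\underline{x},\underline{y}$ linearly dependent" is not quite exact (one also needs $y_{0}\underline{x}=x_{0}\underline{y}$; the locus is essentially $x,y$ linearly dependent over $\R$ as paravectors), but your argument only uses that its complement is open and dense, which is true; and the polynomial-extension step you invoke there is genuinely needed, since the defining expression $[xy^{c}-yx^{c}]^{-1}[\,\cdot\,]$ is not literally defined on that locus, so it is good that you made it explicit.
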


\begin{proof}
First of all, notice that $(xy^{c})^{c}=yx^{c}$, hence
$$(xy^{c})_{0}=(yx^{c})_{0}=\langle x,y\rangle,\qquad xy^{c}-yx^{c}=\underline{(xy^{c})}-\underline{(yx^{c})}=2\underline{(xy^{c})}=-2\underline{(yx^{c})}.$$
Moreover $\underline{(xy^{c})}^{2}=-|\underline{(xy^{c})}|^{2}=-|\underline{(xy^{c})}^{c}|^{2}=\underline{(yx^{c})}^{2}$. Therefore
\begin{align*}
(xy^{c})^{k+1}-(yx^{c})^{k+1} &= ((xy^{c})_{0}+\underline{(xy^{c})})^{k+1}+((yx^{c})_{0}+\underline{(yx^{c})})^{k+1}\\
	&=\sum_{r=0}^{k+1}\binom{k+1}{r}\left[(xy^{c})_{0}^{k+1-r}(xy^{c})_{v}^{r}-(yx^{c})_{0}^{k+1-r}\underline{(yx^{c})}^{r}\right]\\
	&= \sum_{r=0}^{k+1}\binom{k+1}{r}\left[(xy^{c})_{0}^{k+1-r}[\underline{(xy^{c})}^{r}-\underline{(yx^{c})}^{r}]\right].
\end{align*}
Now, for $r=2h$, we have 
$$
\underline{(xy^{c})}^{2h}-\underline{(yx^{c})}^{2h}=(-|\underline{(xy^{c})}|^{2})^{h}-(-|\underline{(xy^{c})}^{c}|^{2})^{h}=(-|\underline{(xy^{c})}|^{2})^{h}-(-|\underline{(yx^{c})}^{c}|^{2})^{h}=0,
$$
while, for $r=2h+1$, we have
\begin{align*}
\underline{(xy^{c})}^{2h+1}-\underline{(yx^{c})}^{2h+1}&=(-|\underline{(xy^{c})}|^{2})^{h}\underline{(xy^{c})}-(-|\underline{(xy^{c})}|^{2})^{h}\underline{(yx^{c})}\\
&=(-|\underline{(xy^{c})}|^{2})^{h}[\underline{(xy^{c})}-\underline{(yx^{c})}]=(-1)^{h}|\underline{(xy^{c})}|^{2h}2\underline{(xy^{c})}.
\end{align*}

Therefore, we obtain
\begin{equation*}
(xy^{c})^{k+1}-(yx^{c})^{k+1}= \sum_{h=1}^{[k/2]}\binom{k+1}{2h+1}\langle x,y \rangle^{k-2h}(-1)^{h}|\underline{(xy^{c})}|^{2h}2\underline{(xy^{c})}.
\end{equation*}

Now we use the following equalities:
$$
|xy^{c}|^{2}=(xy^{c})(xy^{c})^{c}=|x|^{2}|y|^{2}=|(xy^{c})_{0}+\underline{(xy^{c})}|^{2}=\langle x,y\rangle^{2}+|(\underline{xy^{c})}|^{2},
$$
and so
$$
|\underline{(xy^{c})}|^{2h}=(|x|^{2}|y|^{2}-\langle x,y\rangle^{2})^{h}=\sum_{\ell=0}^{h}\binom{h}{\ell}(-1)^{\ell}\langle x,y\rangle^{2\ell}(|x|^{2}|y|^{2})^{h-\ell}.
$$
Summarizing, (since $(-1)^{\ell}=(-1)^{-\ell}$, for any integer $\ell$),we get,
%\begin{align*}
%[xy^{c}-yx^{c}]^{-1}[(xy^{c})^{k+1}-(yx^{c})^{k+1}]=&(2\underline{(xy^{c})})^{-1}\left[\sum_{h=1}^{[k/2]}\binom{k+1}{2h+1}\langle x,y \rangle^{k-2h}(-1)^{h}\times\right.\\
% &\left.\times\left(\sum_{\ell=0}^{h}\binom{h}{\ell}(-1)^{\ell}\langle x,y\rangle^{2\ell}(|x|^{2}|y|^{2})^{h-\ell}\right)2\underline{(xy^{c})}\right]\\
%= &\sum_{h=1}^{[k/2]}\binom{k+1}{2h+1}\langle x,y \rangle^{k-2h}(-1)^{h}\left(\sum_{\ell=0}^{h}\binom{h}{\ell}(-1)^{\ell}\langle x,y\rangle^{2\ell}(|x|^{2}|y|^{2})^{h-\ell}\right)\\
%=&\sum_{h=1}^{[k/2]}\sum_{\ell=0}^{h}\binom{k+1}{2h+1}\binom{h}{\ell}(-1)^{h-l}\langle x,y\rangle^{k-2(h-\ell)}(|x|^{2}|y|^{2})^{h-\ell}\\
%=&\sum_{\ell=0}^{[k/2]}\sum_{h=\ell}^{[k/2]}\binom{k+1}{2h+1}\binom{h}{\ell}(-1)^{h-l}\langle x,y\rangle^{k-2(h-\ell)}(|x|^{2}|y|^{2})^{h-\ell}.
%\end{align*}
\begin{align*}
&[xy^{c}-yx^{c}]^{-1}[(xy^{c})^{k+1}-(yx^{c})^{k+1}]=\\
&=(2\underline{(xy^{c})})^{-1}\left[\sum_{h=1}^{[k/2]}\binom{k+1}{2h+1}\langle x,y \rangle^{k-2h}(-1)^{h}\right.\left.\left(\sum_{\ell=0}^{h}\binom{h}{\ell}(-1)^{\ell}\langle x,y\rangle^{2\ell}(|x|^{2}|y|^{2})^{h-\ell}\right)2\underline{(xy^{c})}\right]\\
&= \sum_{h=1}^{[k/2]}\binom{k+1}{2h+1}\langle x,y \rangle^{k-2h}(-1)^{h}\left(\sum_{\ell=0}^{h}\binom{h}{\ell}(-1)^{\ell}\langle x,y\rangle^{2\ell}(|x|^{2}|y|^{2})^{h-\ell}\right)\\
&=\sum_{h=1}^{[k/2]}\sum_{\ell=0}^{h}\binom{k+1}{2h+1}\binom{h}{\ell}(-1)^{h-l}\langle x,y\rangle^{k-2(h-\ell)}(|x|^{2}|y|^{2})^{h-\ell}\\
&=\sum_{\ell=0}^{[k/2]}\sum_{h=\ell}^{[k/2]}\binom{k+1}{2h+1}\binom{h}{\ell}(-1)^{h-l}\langle x,y\rangle^{k-2(h-\ell)}(|x|^{2}|y|^{2})^{h-\ell}.
\end{align*}
Finally, using the change of variables $h-l=r$ we obtain that 
\begin{align*}
&[xy^{c}-yx^{c}]^{-1}[(xy^{c})^{k+1}-(yx^{c})^{k+1}]=\\
&=\sum_{\ell=0}^{[k/2]}\sum_{r=0}^{[k/2]-\ell}\binom{k+1}{2(r+\ell)+1}\binom{r+\ell}{\ell}(-1)^{r}\langle x,y\rangle^{k-2r}(|x|^{2}|y|^{2})^{r}\\
&=\sum_{r=0}^{[k/2]}\left(\sum_{\ell=0}^{[k/2]-r}\binom{k+1}{2(r+\ell)+1}\binom{r+\ell}{\ell}\right)(-1)^{r}\langle x,y\rangle^{k-2r}(|x|^{2}|y|^{2})^{r}.
\end{align*}
Now the general coefficient of our sum is easily identified with
\begin{equation*}
\sum_{\ell=0}^{[k/2]-r}\binom{k+1}{2(r+\ell)+1}\binom{r+\ell}{\ell} =
\frac{\Gamma(k+2)}{\Gamma(2r+2)\Gamma(k-2r+1)}\sum_{\ell=0}^{[k/2]-r}\frac{(r-\frac{k}{2})_\ell(r-\frac{k-1}{2})_\ell}{(r+\frac32)_\ell}\frac{1}{\ell!}
%\frac{(k+1)!(-1)^{m-1}\Gamma(-(m-\frac32))}{(m-1)!r!2^{k+1}\Gamma(r+\frac32)}\sum_{\ell=0}^{m-1}(-1)^{\ell}\frac{(-(m-\frac32))_{\ell}}{(r+\frac32)_{\ell}}{m-1\choose m-1-\ell}
\end{equation*}
%where $m=[k/2]-r+1$ and 
where $(.)_{\ell}$ denotes the Pochhammer symbol. The last sum is the hypergeometric function $_{2}F_{1}(r-\frac{k}{2}, r-\frac{k-1}{2}; r+\frac32;1)$ evaluated in 1. Note that this hypergeometric function terminates as either $r-\frac{k}{2}$ or $r-\frac{k-1}{2}$ is a negative integer. Its value is known due to Gauss's hypergeometric theorem as
$$
%\sum_{\ell=0}^{m-1}(-1)^{\ell}\frac{(-(m-\frac32))_{\ell}}{(r+\frac32)_{\ell}}{m-1\choose m-1-\ell}=\frac{\Gamma(r+\frac32)\Gamma(r+2m-1)}{\Gamma(r+m)\Gamma(r+m+\frac12)}.
_{2}F_{1}\left(r-\frac{k}{2}, r-\frac{k-1}{2}; r+\frac32;1\right)=\frac{\Gamma(r+\frac32)\Gamma(k-r+1)}{\Gamma(\frac{k}{2}+\frac32)\Gamma(\frac{k}{2}+1)}.
$$
%Recalling that $\Gamma(-(m-\frac32))=\frac{\pi(-1)^{m+1}}{\Gamma(m-\frac12)}$ and that $k=2(m+r-1)$, we obtain
We therefore have
\begin{align*}
\sum_{\ell=0}^{[k/2]-r}\binom{k+1}{2(r+\ell)+1}\binom{r+\ell}{\ell}&=
\frac{\Gamma(k+2)}{\Gamma(\frac{k}{2}+\frac32)\Gamma(\frac{k}{2}+1)}\cdot\frac{\Gamma(r+\frac32)}{\Gamma(2r+2)}\cdot\frac{\Gamma(k-r+1)}{\Gamma(k-2r+1)}\\
&=\frac{2^{k+1}}{\sqrt{\pi}}\frac{\sqrt{\pi}}{2^{2r+1}\Gamma(r+1)}\frac{\Gamma(k-r+1)}{\Gamma(k-2r+1)}=2^{k-2r}{k-r\choose r},
\end{align*}
where we used the duplication formula $\Gamma(2z)=\frac{2^{2z-1}}{\sqrt{\pi}}\Gamma(z)\Gamma(z+\frac12)$ on the first two factors.\\
%\begin{equation*}
%\sum_{\ell=0}^{[k/2]-r}\binom{k+1}{2(r+\ell)+1}\binom{r+\ell}{\ell}=  \frac{2^{k-2r}  \Gamma[k-r+1]}{\Gamma[r+1] \Gamma[k-2r+1]}=2^{k-2r}{k-r\choose r},
%\end{equation*}
The result is exactly the general coefficient of the Gegenbauer polynomial $C^{1}_{k}$ (see formula~\eqref{explicitgegenbauer}), and we know that in dimension 4 the parameter $\lambda$ equals one (see formula~\eqref{zonalGegenbauer}), i.e.
$$
Z_{k}^{1}(x,y)=(k+1)C_{k}^{1}\left(\frac{\langle x,y\rangle}{|x||y|}\right)(|x||y|)^{k}.
$$
\end{proof}

We now want to generalize~\cite[Corollary 5.2]{perotticlifford} but before that, we need the following lemma.

\begin{lemma}
For any $k\geq0$, it holds
\begin{equation}\label{eq1}
((xy^{c})^{k})_{0}=((xy^{c})^{k+1})_{s}'-\langle x,y\rangle((xy^{c})^{k})_{s}'.
\end{equation}
\end{lemma}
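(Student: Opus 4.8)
The plan is to work directly with the single element $q:=xy^{c}$ and reduce everything to commutative algebra. I will use three facts already recorded in the paper: $q^{c}=(xy^{c})^{c}=yx^{c}$; the real part $q_{0}=(xy^{c})_{0}=\langle x,y\rangle$; and $q-q^{c}=\underline{(xy^{c})}-\underline{(yx^{c})}=2\underline{q}$, where $\underline{q}:=\underline{(xy^{c})}$ is the vector part of $q$. The point that makes the computation clean is that $q=q_{0}+\underline{q}$ and $q^{c}=q_{0}-\underline{q}$ are real-linear combinations of $1$ and the single vector $\underline{q}$, so $q$, $q^{c}$, $\underline{q}$ and all their powers lie in the commutative plane $\C_{J}$ with $J=\underline{q}/|\underline{q}|\in\mathbb{S}$ and pairwise commute.

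The heart of the argument is the elementary identity
$$q^{k+1}-(q^{c})^{k+1}=q_{0}\bigl(q^{k}-(q^{c})^{k}\bigr)+\underline{q}\bigl(q^{k}+(q^{c})^{k}\bigr),$$
which I obtain simply by writing $q^{k+1}=(q_{0}+\underline{q})q^{k}$, $(q^{c})^{k+1}=(q_{0}-\underline{q})(q^{c})^{k}$ and subtracting. Granting this, I plug the definitions of the extended spherical derivatives into the right-hand side of \eqref{eq1}:
$$((xy^{c})^{k+1})_{s}'-\langle x,y\rangle\,((xy^{c})^{k})_{s}'=(2\underline{q})^{-1}\Bigl[\bigl(q^{k+1}-(q^{c})^{k+1}\bigr)-q_{0}\bigl(q^{k}-(q^{c})^{k}\bigr)\Bigr].$$
By the displayed identity the bracket equals $\underline{q}\bigl(q^{k}+(q^{c})^{k}\bigr)$, and since $(2\underline{q})^{-1}\underline{q}=\tfrac12$ and $\underline{q}$ commutes with $q^{k}+(q^{c})^{k}$, the whole expression collapses to $\tfrac12\bigl(q^{k}+(q^{c})^{k}\bigr)=\tfrac12\bigl((xy^{c})^{k}+(yx^{c})^{k}\bigr)=((xy^{c})^{k})_{0}$, which is the left-hand side of \eqref{eq1}.

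I do not expect a genuine obstacle here: the proof is a two-line manipulation inside $\C_{J}$. The only delicate point is the degenerate locus $\underline{q}=0$ (equivalently $\underline{x}$ and $\underline{y}$ linearly dependent), where $[xy^{c}-yx^{c}]^{-1}$ is not literally defined. This I would dispatch by the standard polynomial-identity argument: as in the preceding lemma, $((xy^{c})^{k})_{s}'$ is in fact a polynomial in the real coordinates of $x$ and $y$, so both sides of \eqref{eq1} are polynomials; having established the identity on the dense open set $\underline{q}\neq 0$, it extends to all paravectors by continuity.
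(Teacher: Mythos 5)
Your proof is correct and is essentially the paper's own argument in different notation: your identity $q^{k+1}-(q^{c})^{k+1}=q_{0}(q^{k}-(q^{c})^{k})+\underline{q}\,(q^{k}+(q^{c})^{k})$ is exactly the paper's factorization $(xy^{c}-\langle x,y\rangle)(xy^{c})^{k}-(yx^{c}-\langle x,y\rangle)(yx^{c})^{k}=\underline{(xy^{c})}\,(xy^{c})^{k}+\underline{(xy^{c})}\,(yx^{c})^{k}$, followed by the same cancellation of $(2\underline{(xy^{c})})^{-1}$ against $\underline{(xy^{c})}$. Your extra remark on the degenerate locus $\underline{(xy^{c})}=0$ is a small point of care the paper leaves implicit, but it does not change the route.
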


\begin{proof} We have,
\begin{align*}
&((xy^{c})^{k+1})_{s}'-\langle x,y\rangle((xy^{c})^{k})_{s}'=\\
&=[xy^{c}-yx^{c}]^{-1}[(xy^{c})^{k+1}-(yx^{c})^{k+1}]-
\langle x,y\rangle[xy^{c}-yx^{c}]^{-1}[(xy^{c})^{k}-(yx^{c})^{k}]\\
&=[xy^{c}-yx^{c}]^{-1}[(xy^{c})^{k+1}-(yx^{c})^{k+1}-
\langle x,y\rangle((xy^{c})^{k}-(yx^{c})^{k})]\\
&=[xy^{c}-yx^{c}]^{-1}[(xy^{c}-\langle x,y\rangle)(xy^{c})^{k}-(yx^{c}-\langle x,y\rangle)(yx^{c})^{k}]\\
&=[xy^{c}-yx^{c}]^{-1}[(xy^{c}-\langle x,y\rangle)(xy^{c})^{k}]-[xy^{c}-yx^{c}]^{-1}(yx^{c}-\langle x,y\rangle)(yx^{c})^{k}]\\
&=[2\underline{(xy^{c})}]^{-1}[\underline{(xy^{c})}(xy^{c})^{k}]-[-2\underline{(yx^{c})}]^{-1}\underline{(yx^{c})}(yx^{c})^{k}]\\
&=\frac{1}{2}(xy^{c})^{k}+\frac{1}{2}(yx^{c})^{k}.
\end{align*}
\end{proof}

Now, using formula~\eqref{eq1}, we get
$$
((xy^{c})^{k})_0=\frac{1}{k+1}Z^{1}_{k}(x,y)-\langle x,y\rangle\frac{1}{k}Z^{1}_{k-1}(x,y),
$$
or, equivalently, if $w=\frac{\langle x,y\rangle}{|x||y|}$,
\begin{equation*}%\label{realpart}
((xy^{c})^{k})_0=C^{1}_{k}(w)(|x||y|)^{k}-\langle x,y\rangle C^{1}_{k-1}(w)(|x||y|)^{k-1}=\left[C^{1}_{k}(w)-w C^{1}_{k-1}(w)\right](|x||y|)^{k}.
\end{equation*}
\begin{lemma}\label{lemmaxyc}
For any $k\geq0$, it holds
$$
(xy^{c})^{k+1}=\frac{xy^{c}}{k+1}Z^{1}_{k}(x,y)-\frac{|x|^{2}|y|^{2}}{k}Z^{1}_{k-1}(x,y),
$$
or, equivalently
$$
(xy^{c})^{k+1}=\left[xy^{c}C^{1}_{k}(w)-|x||y|C^{1}_{k-1}(w)\right](|x||y|)^{k}.
$$\end{lemma}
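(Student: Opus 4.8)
The plan is to feed the known scalar/vector decomposition \eqref{zydeco} of $(xy^{c})^{k+1}$ with the explicit formulas for the two pieces that have just been established in this appendix, and then collapse the result using a three–term recurrence for the Gegenbauer polynomials $C^{1}_{\ell}$. Applying \eqref{zydeco} at exponent $k+1$ gives
\[
(xy^{c})^{k+1}=\bigl((xy^{c})^{k+1}\bigr)^{\circ}_{s}+\underline{(xy^{c})}\,\bigl((xy^{c})^{k+1}\bigr)'_{s},
\]
so only the two coefficients need to be identified. The vector coefficient is immediate from the first lemma of this appendix: $\bigl((xy^{c})^{k+1}\bigr)'_{s}=\tfrac{1}{k+1}Z^{1}_{k}(x,y)=C^{1}_{k}(w)(|x||y|)^{k}$, where $w$ is as in \eqref{angle} and we used \eqref{zonalGegenbauer} with $\lambda=1$.

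For the scalar coefficient I would use the identity for $\bigl((xy^{c})^{k}\bigr)_{0}$ derived from \eqref{eq1} just above the statement, namely $\bigl((xy^{c})^{k}\bigr)_{0}=\bigl[C^{1}_{k}(w)-wC^{1}_{k-1}(w)\bigr](|x||y|)^{k}$, applied with $k$ replaced by $k+1$; this yields $\bigl((xy^{c})^{k+1}\bigr)^{\circ}_{s}=\bigl[C^{1}_{k+1}(w)-wC^{1}_{k}(w)\bigr](|x||y|)^{k+1}$. The one genuinely nonroutine input is the Chebyshev–type recurrence $C^{1}_{k+1}(t)=2tC^{1}_{k}(t)-C^{1}_{k-1}(t)$, which one reads off from \eqref{G3} by setting $\lambda=0$ (or directly from \eqref{explicitgegenbauer}); it gives $C^{1}_{k+1}(w)-wC^{1}_{k}(w)=wC^{1}_{k}(w)-C^{1}_{k-1}(w)$, hence $\bigl((xy^{c})^{k+1}\bigr)^{\circ}_{s}=\bigl[wC^{1}_{k}(w)-C^{1}_{k-1}(w)\bigr](|x||y|)^{k+1}$.

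It remains to reassemble. Substituting both coefficients into \eqref{zydeco} and using $w|x||y|=\langle x,y\rangle=(xy^{c})_{0}$ together with $(xy^{c})_{0}+\underline{(xy^{c})}=xy^{c}$, the two terms $wC^{1}_{k}(w)(|x||y|)^{k+1}$ and $\underline{(xy^{c})}\,C^{1}_{k}(w)(|x||y|)^{k}$ merge into $xy^{c}\,C^{1}_{k}(w)(|x||y|)^{k}$, so that
\[
(xy^{c})^{k+1}=\bigl[xy^{c}\,C^{1}_{k}(w)-|x||y|\,C^{1}_{k-1}(w)\bigr](|x||y|)^{k},
\]
which is the second asserted identity; the first one follows at once by rewriting $C^{1}_{k}(w)(|x||y|)^{k}=\tfrac{1}{k+1}Z^{1}_{k}(x,y)$ and $|x||y|\,C^{1}_{k-1}(w)(|x||y|)^{k}=|x|^{2}|y|^{2}C^{1}_{k-1}(w)(|x||y|)^{k-1}=\tfrac{|x|^{2}|y|^{2}}{k}Z^{1}_{k-1}(x,y)$ via \eqref{zonalGegenbauer}. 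I do not expect a real obstacle here: everything reduces to the Gegenbauer recurrence once the scalar/vector split is invoked. The only mild points requiring care are the degenerate index $k=0$ (where $C^{1}_{-1}\equiv 0$ makes the second form literally correct, the first form being read with the convention $\tfrac{1}{k}Z^{1}_{k-1}=C^{1}_{k-1}(w)(|x||y|)^{k-1}$) and checking that every appeal to \eqref{eq1}, to the first lemma of this appendix, and to the recurrence is made at admissible indices, which it is.
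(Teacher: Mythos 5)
Your proof is correct and follows essentially the same route as the paper: decompose via \eqref{zydeco} and substitute the two formulas for the spherical value and the spherical derivative established earlier in the appendix. The only difference is mechanical: the paper applies \eqref{zydeco} at exponent $k$, rewrites $\underline{(xy^{c})}-\langle x,y\rangle=-yx^{c}$, and multiplies the resulting identity by $xy^{c}$ (using $xy^{c}\,yx^{c}=|x|^{2}|y|^{2}$), thereby avoiding any Gegenbauer recurrence, whereas you work directly at exponent $k+1$ and invoke the three-term recurrence $C^{1}_{k+1}(t)=2tC^{1}_{k}(t)-C^{1}_{k-1}(t)$ to the same effect.
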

\begin{proof}
Thanks to formula~\eqref{zydeco},
\begin{align*}
(xy^{c})^{k}&=\frac{1}{k+1}Z^{1}_{k}(x,y)-\langle x,y\rangle\frac{1}{k}Z^{1}_{k-1}(x,y)+\underline{(xy^{c})}\frac{1}{k}Z^{1}_{k-1}(x,y)\\
&=\frac{1}{k+1}Z^{1}_{k}(x,y)+(\underline{(xy^{c})}-\langle x,y\rangle)\frac{1}{k}Z^{1}_{k-1}(x,y)\\
&=\frac{1}{k+1}Z^{1}_{k}(x,y)-yx^{c}\frac{1}{k}Z^{1}_{k-1}(x,y),
\end{align*}
therefore, recalling that $(xy^{c})^{-1}=(yx^{c})/|x|^{2}|y|^{2}$, then
$$
(xy^{c})^{k+1}=\frac{xy^{c}}{k+1}Z^{1}_{k}(x,y)-\frac{|x|^{2}|y|^{2}}{k}Z^{1}_{k-1}(x,y).
$$
\end{proof}

\bibliographystyle{amsplain}

\end{document}